\documentclass[11pt,reqno,a4paper]{article} 

\usepackage[english]{babel}
\usepackage[centertags]{amsmath}
\usepackage{amsfonts,amssymb,amsthm}
\usepackage{color}
\usepackage{hyperref} 				
\usepackage{a4wide}
\usepackage{float}
\usepackage{graphicx,cite}
\usepackage{palatino}
\usepackage{tikz,fp,ifthen,fullpage}
\usetikzlibrary{backgrounds}
\usetikzlibrary{decorations.pathmorphing,backgrounds,fit,calc,through}
\usetikzlibrary{arrows}
\usetikzlibrary{shapes,decorations,shadows}
\usetikzlibrary{fadings}
\usetikzlibrary{patterns}
\usetikzlibrary{mindmap}
\usetikzlibrary{decorations.text}
\usetikzlibrary{decorations.shapes}

\numberwithin{equation}{section}
\setcounter{tocdepth}{1}

\theoremstyle{plain}
\newtheorem{theorem}{Theorem}[section]
\newtheorem{proposition}[theorem]{Proposition}
\newtheorem{lemma}[theorem]{Lemma}

\newtheorem{coro}[theorem]{Corollary}
\newtheorem{corollary}[theorem]{Corollary}
\newtheorem{definition}[theorem]{Definition}

\theoremstyle{definition}

\newenvironment{remark}{\pushQED{\qed} \remarkbase}{\popQED\endremarkbase}



\newcommand{\R}{{\mathbb R}}


\newcommand{\mG}{\mathcal{G}}

\newcommand{\mL}{\mathcal{L}}

\newcommand{\mT}{\mathcal{T}}


\renewcommand{\a}{\alpha}
\renewcommand{\b}{\beta}
\newcommand{\g}{\gamma}
\renewcommand{\d}{\delta}

\newcommand{\e}{\varepsilon}
\newcommand{\ph}{\varphi}

\renewcommand{\t}{\tau}
\renewcommand{\th}{\vartheta}





\newcommand{\la}{\langle}
\newcommand{\ra}{\rangle}

\newcommand{\pa}{\partial}



\newcommand{\bcr}{\begin{color}{red}}
\newcommand{\ec}{\end{color}}
\newcommand{\bcb}{\begin{color}{blue}}
\newcommand{\bcg}{\begin{color}{green}}

\begin{document}

\title{Existence of a lens--shaped cluster of surfaces self--shrinking by mean curvature}

\author{Pietro Baldi, Emanuele Haus and Carlo Mantegazza}
\date{\today}

\maketitle

\begin{abstract}
We rigorously show the existence of a rotationally and centrally symmetric ``lens--shaped'' cluster of three surfaces, meeting at a smooth common circle, forming equal angles of $120$ degrees, self--shrinking under the motion by mean curvature.
\end{abstract}


\section{Introduction}
\label{sec:intro}

The goal of this paper is to give a contribution to the problem of the geometric evolution of a cluster of ($2$--dimensional) surfaces in $\R^3$, moving with normal velocity given by the mean curvature (at each interior point of every surface), meeting in triple along smooth common curves forming equal angles of $120$ degrees and meeting at common isolated points in groups of six with suitable angles (the ones that we get at the barycenter of a regular tetrahedron considering the six space triangles having as vertices such barycenter and any pair of vertices of the tetrahedron). Schulze and White called this flow ``mean curvature flow with triple edges'', see~\cite{schulzewhite}. Such geometric conditions are justified by ``energetic reasons'' and the physical model that we have in mind is a cluster of soap bubbles whose motion is driven only by the surface tension. However, this problem also modelizes the evolution of a $3$--dimensional system of different materials, where the energy is only given by the total area of the interfaces between them (see for instance~\cite{hermul,gurtin2,kinderliu} and {\em http://mimp.materials.cmu.edu}).

Even if the analogous evolution problem for systems of curves in the plane was considered by several authors~\cite{mannovtor,BeNo,schn-schu,schnurerlens,haettenschweiler,mazsae,Ilnevsch,MMN13,pluda,mannovplusch}, the literature on the surface case is actually quite small. In the papers~\cite{brakke,kimton} a global weak solution in the ``Brakke sense'' is constructed, while the short time existence of a smooth and regular solution has been established in~\cite{degako,schulzewhite} in some special cases. Anyway, the analysis of singularities and the subsequent possible restarting procedure are still open problems.
We also mention the works~\cite{freire2,freire3} where a graph evolving by mean curvature and meeting a horizontal plane with a fixed angle of $60$ degrees is studied: by considering the union 
of such a graph with its reflection through the plane, 
one gets an evolving symmetric ``lens--shaped'' domain, 
then the two boundary bounded surfaces, together with the unbounded part of the horizontal plane outside the domain given by the projection of the graph on such plane, 
give a cluster of three surfaces forming angles of $120$ degrees at the common curve given by the boundary of such domain. The clusters we consider in this paper belong to this class.

By the work of Huisken~\cite{huisk3} in the
smooth case of hypersurfaces in the Euclidean space and of
Ilmanen~\cite{ilman1,ilman3} in the more general weak setting of
varifolds, a suitable sequence of rescalings of the {\em subsets}
of $\R^n$ which are evolving by mean curvature (possibly generalized), approaching a singular time of the flow, ``converges'' to a so called ``blow--up limit'' set which, letting it flow again by mean curvature, simply moves by homothety, precisely, it shrinks down self--similarly toward the origin of the Euclidean space. This procedure and the classification of these special sets (possibly under some hypotheses), which are called {\em shrinkers}, is a key point in understanding the asymptotic behavior of the flow at a singular time.

Restricting ourselves to the case of the evolution of a {\em double--bubble} with a vertical axis of rotational symmetry, that is, a cluster of only three smooth surfaces meeting along a circle with angles of $120$ degrees (see~\cite[Section~9]{freire3}), which is the topologically simplest cluster in the compact case -- an example is shown in Figure 1, 
\begin{figure}[H]
\begin{center}
\begin{tikzpicture}[scale=.8]
\draw[color=black, thick,scale=1,domain=-1.57: 1.57,
smooth,variable=\t,shift={(0,0)},rotate=0]plot({1.*sin(\t r)},
{1.*cos(\t r)}) ; 
\filldraw[color=black!40!white, scale=1.2,domain=-1: 1,
smooth,variable=\t,shift={(0,0.55)},rotate=180]plot({1.*sin(\t r)},
{1.*cos(\t r)}) ; 
\filldraw[fill=white!83!black, 
scale=2.02,domain=-0.52: 0.52,
smooth,variable=\t,shift={(0,0.87)},rotate=180]plot({1.*sin(\t r)},
{1.*cos(\t r)});
\filldraw[color=white!83!black, scale=2.02,domain=-0.52: 0.52,
smooth,variable=\t,shift={(0,-0.87)},rotate=0]plot({1.*sin(\t r)},
{1.*cos(\t r)}) ; 
\draw[color=black!60!white,  scale=2.02,domain=-0.52: 0.52,
smooth,variable=\t,shift={(0,-0.87)},rotate=0]plot({1.*sin(\t r)},
{1.*cos(\t r)}) ; 
\draw[color=black!60!white,  scale=2.02,domain=-0.47: -0.18,
smooth,variable=\t,shift={(0,-0.91)},rotate=0]plot({1.*sin(\t r)},
{1.*cos(\t r)}) ; 
\draw[color=black!60!white,  scale=2.02,domain=0.18: 0.47,
smooth,variable=\t,shift={(0,-0.91)},rotate=0]plot({1.*sin(\t r)},
{1.*cos(\t r)}) ; 
\draw[color=black!60!white,  scale=2.02,domain=-0.425: -0.19,
smooth,variable=\t,shift={(0,-0.95)},rotate=0]plot({1.*sin(\t r)},
{1.*cos(\t r)}) ; 
\draw[color=black!60!white,  scale=2.02,domain=0.19: 0.425,
smooth,variable=\t,shift={(0,-0.95)},rotate=0]plot({1.*sin(\t r)},
{1.*cos(\t r)}) ; 
\draw[color=black!60!white,  scale=2.02,domain=-0.38: -0.20,
smooth,variable=\t,shift={(0,-0.99)},rotate=0]plot({1.*sin(\t r)},
{1.*cos(\t r)}) ; 
\draw[color=black!60!white,  scale=2.02,domain=0.20: 0.38,
smooth,variable=\t,shift={(0,-0.99)},rotate=0]plot({1.*sin(\t r)},
{1.*cos(\t r)}) ; 
\draw[color=black!60!white,  scale=2.02,domain=-0.31: -0.21,
smooth,variable=\t,shift={(0,-1.03)},rotate=0]plot({1.*sin(\t r)},
{1.*cos(\t r)}) ; 
\draw[color=black!60!white,  scale=2.02,domain=0.21: 0.31,
smooth,variable=\t,shift={(0,-1.03)},rotate=0]plot({1.*sin(\t r)},
{1.*cos(\t r)}) ; 
\draw[color= black!60!white, scale=2.02,domain=-0.52: 0.52,
smooth,variable=\t,shift={(0,0.87)},rotate=180]plot({1.*sin(\t r)},
{1.*cos(\t r)}) ; 
\draw[color= black!60!white, scale=2.02,domain=0.41: 0.51,
smooth,variable=\t,shift={(0,0.94)},rotate=180]plot({1.*sin(\t r)},
{1.*cos(\t r)}) ; 
\draw[color= black!60!white, scale=2.02,domain=0.38: 0.50,
smooth,variable=\t,shift={(0,1.01)},rotate=180]plot({1.*sin(\t r)},
{1.*cos(\t r)}) ; 
\draw[color= black!60!white, scale=2.02,domain=0.3: 0.48,
smooth,variable=\t,shift={(0,1.08)},rotate=180]plot({1.*sin(\t r)},
{1.*cos(\t r)}) ; 
\draw[color= black!60!white, scale=2.02,domain=0.32: 0.45,
smooth,variable=\t,shift={(0,1.15)},rotate=180]plot({1.*sin(\t r)},
{1.*cos(\t r)}) ; 
\draw[color= black!60!white, scale=2.02,domain=0.30: 0.41,
smooth,variable=\t,shift={(0,1.22)},rotate=180]plot({1.*sin(\t r)},
{1.*cos(\t r)}) ; 
\draw[color= black!60!white, scale=2.02,domain=0.23: 0.35,
smooth,variable=\t,shift={(0,1.29)},rotate=180]plot({1.*sin(\t r)},
{1.*cos(\t r)}) ; 
\draw[color= black!60!white, scale=2.02,domain=0.18: 0.30,
smooth,variable=\t,shift={(0,1.36)},rotate=180]plot({1.*sin(\t r)},
{1.*cos(\t r)}) ; 
\draw[color= black!60!white, scale=2.02,domain=0.12: 0.21,
smooth,variable=\t,shift={(0,1.43)},rotate=180]plot({1.*sin(\t r)},
{1.*cos(\t r)}) ; 
\draw[color= black, thick, scale=2.02,domain=-0.52: 0.52,
smooth,variable=\t,shift={(0,0.87)},rotate=180]plot({1.*sin(\t r)},
{1.*cos(\t r)}) ; 
\draw[color= black!60!white, scale=2.02,domain=1.35: 0.8,
smooth,variable=\t,shift={(0,-0.65)},rotate=180]plot({1.*sin(\t r)},
{1.*cos(\t r)}) ; 
\draw[color= black!60!white, scale=2.02,domain=1.31: 0.8,
smooth,variable=\t,shift={(0,-0.4)},rotate=180]plot({1.*sin(\t r)},
{1.*cos(\t r)}) ; 
\draw[color= black!60!white, scale=2.02,domain=1.2: 0.8,
smooth,variable=\t,shift={(0,-0.2)},rotate=180]plot({1.*sin(\t r)},
{1.*cos(\t r)}) ; 
\draw[color= black!60!white, scale=2.02,domain=1.1: 0.7,
smooth,variable=\t,shift={(0,0)},rotate=180]plot({1.*sin(\t r)},
{1.*cos(\t r)}) ; 
\draw[color= black!60!white, scale=2.02,domain=1: 0.7,
smooth,variable=\t,shift={(0,0.2)},rotate=180]plot({1.*sin(\t r)},
{1.*cos(\t r)}) ; 
\draw[color=black!60!white, scale=2.02,domain=0.91: 0.65,
smooth,variable=\t,shift={(0,0.35)},rotate=180]plot({1.*sin(\t r)},
{1.*cos(\t r)}) ; 
\draw[color= black!60!white, scale=2.02,domain=0.82: 0.43,
smooth,variable=\t,shift={(0,0.5)},rotate=180]plot({1.*sin(\t r)},
{1.*cos(\t r)}) ; 
\draw[color= black!60!white, scale=2.02,domain=0.73: 0.4,
smooth,variable=\t,shift={(0,0.63)},rotate=180]plot({1.*sin(\t r)},
{1.*cos(\t r)}) ; 
\draw[color=black!60!white, scale=2.02,domain=0.66: 0.43,
smooth,variable=\t,shift={(0,0.71)},rotate=180]plot({1.*sin(\t r)},
{1.*cos(\t r)}) ; 
\draw[color=black,thick, scale=1.98,domain=-2.61: 2.61,
smooth,variable=\t,shift={(0,-0.86)},rotate=180]plot({1.*sin(\t r)},
{1.*cos(\t r)}) ; 
\draw[color=black, thick,scale=1,domain=-1.57: 1.57,
smooth,variable=\t,shift={(0,0)},rotate=0]plot({1.*sin(\t r)},
{1.*cos(\t r)}) ; 
\draw[dashed]
(0,1.7)--(0,-4.5);
\end{tikzpicture}
\end{center}
\begin{caption}{A rotationally symmetric {\em double--bubble}.\label{fig1}}
\end{caption}
\end{figure}
\noindent one hypothetical possibility, after performing the above blow--up procedure (if only one bubble is ``collapsing'') at a singular time of the flow, is to obtain a rotationally symmetric ``lens shaped'' shrinker, which is also symmetric with respect to the horizontal plane, formed by two bounded surfaces (one the horizontal reflection of the other) and an unbounded plane surface, meeting at angles of $120$ degrees along a common circle in the plane. Such a shrinker is then given by the rotation along the vertical axis of a``lens shaped'' network of three curves, symmetric with the respect to the horizontal and the vertical line and meeting the horizontal line at an angle of $60$ degrees, as in Figure \ref{fig2}. 
\begin{figure}[H]
\begin{center}
\begin{tikzpicture}[scale=0.2]
\filldraw[pattern=north east lines,  pattern color=black!30!white, scale=6, shift={(1.53,4.5)}]
(-5,-6)--(0,-6)--
(0,-6)--(2.5,-3.5)--
(2.5,-3.5)--(-2.5,-3.5)--
(-2.5,-3.5)--(-5,-6);
\draw[color=white, very thick, scale=6, shift={(1.53,4.5)}]
(-5,-6)--(0,-6)
(0,-6)--(2.5,-3.5)
(2.5,-3.5)--(-2.5,-3.5)
(-2.5,-3.5)--(-5,-6);
\filldraw[ball color=white]
(-3.035,0)to[out= 60,in=180, looseness=1] (2.2,2.8)--
(2.2,2.8)to[out= 0,in=120, looseness=1] (7.435,0)--
(7.435,0) to[out= -135,in=0, looseness=1](0.36,-1.83)--
(0.36,-1.83) to[out= 180,in=-135, looseness=1] (-3.035,0);
\draw[color=black]
(-3.035,0)to[out= -135,in=180, looseness=1](0.36,-1.83)
(0.36,-1.83)to[out= 0,in=-135, looseness=1] (7.435,0)
(-3.035,0)to[out= 60,in=180, looseness=1] (2.2,2.8)
(2.2,2.8)to[out= 0,in=120, looseness=1] (7.435,0);
\draw[color=black, dashed]
(2.2,-2.7)to[out= 0,in=-120, looseness=1] (7.435,0)
(-3.035,0)to[out= -60,in=180, looseness=1] (2.2,-2.7);
\draw[color=black]
(-7,0)to[out= 0,in=180, looseness=1](-3.035,0)
(7.435,0)to[out= 0,in=180, looseness=1](11.4,0);
\draw[color=black,dashed]
(-9,0)to[out= 0,in=180, looseness=1](-7,0)
(11.4,0)to[out= 0,in=180, looseness=1](13.4,0);
\draw[black!70!white, dashed, scale=6, shift={(3.87,4.5)}]
(-5,-6)--(-2.5,-3.5);
\draw[black!70!white, dashed]
(-13,0)--(18,0)
(2.2,8)--(2.2,-8);
\path[font=\normalsize]
(-7,-9)node[above]{$x$}
(16,-0.2)node[above]{$y$}
(3,6.25)node[above]{$z$};
\end{tikzpicture}\begin{tikzpicture}[scale=0.22]
\draw[color=black, thick]
(-3.035,0)to[out= 60,in=180, looseness=1] (2.2,2.8)
(2.2,2.8)to[out= 0,in=120, looseness=1] (7.435,0);
\draw[color=black, thick]
(2.2,-2.7)to[out= 0,in=-120, looseness=1] (7.435,0)
(-3.035,0)to[out= -60,in=180, looseness=1] (2.2,-2.7);
\draw[color=black, thick]
(-11,0)to[out= 0,in=180, looseness=1](-3.035,0)
(7.435,0)to[out= 0,in=180, looseness=1](15.4,0);
\draw[color=black,dashed]
(-13,0)to[out= 0,in=180, looseness=1](-11,0)
(15.4,0)to[out= 0,in=180, looseness=1](17.4,0);
\draw[black!70!white, dashed]
(-13,0)--(18,0)
(2.2,8)--(2.2,-8);
\path[font=\normalsize]
(16,-0.2)node[above]{$y$}
(3,7)node[above]{$z$};
\draw[-latex] ($(0, 0) + (20:1.4cm and 20cm)$(P) arc(110:430:2.5cm and .7cm);
\end{tikzpicture}
\end{center}
\begin{caption}{A rotationally symmetric ``lens shaped'' cluster.\label{fig2}}
\end{caption}
\end{figure}
The goal of this paper is to show rigorously the existence of a shrinker with such a structure. In the case of networks, the work~\cite{schnurerlens} includes global existence results for symmetric systems of curves of ``lens type'' that are shrinkers. Other classification results (the classification is complete only in low--complexity classes of networks) can be found in~\cite{balhausman,balhausman3,chenguo,schnurerlens,haettenschweiler}. 
In the $2$--dimensional ``cluster of surfaces'' case, to our knowledge, the present paper is the first in the literature about existence or non--existence of shrinkers with a given topological/geometric structure.

We underline that the analogous ``standard'' classification problem for shrinkers given by a single smooth, embeded surface (compact or not), rotationally symmetric with respect to the vertical axis, 
is quite open. The only known examples are the plane, the sphere and the {\em Angenent's torus} in~\cite{angen4}. Moreover, we mention that our work is particularly related to the analysis in~\cite[Section~9]{freire3} of the 
evolution by mean curvature of rotationally symmetric clusters of three surfaces, since any shrinker arising from the blow--up procedure at a singular time in such a situation must have 
the same structure and symmetries as the ones we consider in this paper.

Our main result is the following.

\begin{theorem} 
\label{thm:main} There exists a rotationally symmetric ``lens shaped'' shrinker, which is also symmetric with respect to the horizontal plane containing its unbounded surface, such that the three surfaces meet at a common circle in the plane forming three angles of $120$ degrees (as in Figure~\ref{fig2}).
\end{theorem}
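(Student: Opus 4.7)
Since the unbounded horizontal piece of the cluster is a plane through the origin, it automatically solves the shrinker equation $H=\langle x,\nu\rangle/2$ (both sides vanish identically on such a plane). By the two symmetries (rotation about the vertical axis and reflection across the horizontal plane), the whole cluster is determined by a single planar profile curve $\gamma$ in the closed quadrant $\{y\ge 0,\,z\ge 0\}$ whose rotation around the $z$-axis generates the upper bounded surface. Writing $\gamma$ in arc-length as $\gamma(s)=(y(s),z(s))$ with $(y',z')=(\cos\theta,\sin\theta)$ and outward normal $\nu=(\sin\theta,-\cos\theta)$, and using that the mean curvature of the surface of revolution is $H=\theta'+\sin\theta/y$, the shrinker condition reduces to the autonomous system
\begin{equation*}
y'=\cos\theta,\qquad z'=\sin\theta,\qquad \theta'=\frac{y\sin\theta-z\cos\theta}{2}-\frac{\sin\theta}{y},
\end{equation*}
with initial data $(y(0),z(0),\theta(0))=(y_*,0,2\pi/3)$ (the $120^\circ$ condition at the triple circle together with the horizontal symmetry) and terminal data $y(L)=0$, $\theta(L)=\pi$ (smooth closure of the surface at the axis). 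The arc-length $L$ and the radius $y_*$ are unknowns.

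I would attack this as a one-parameter shooting problem in $y_*>0$. For each $y_*$ integrate the system forward; standard ODE theory yields a smooth trajectory as long as it stays in $\{y>0\}$. Let $F(y_*)$ denote the value of $\theta$ at the first time $y$ reaches $0$ (with a suitable substitute if $y$ fails to return). Direct computation gives $\theta'(0)=\tfrac{\sqrt{3}}{4}\bigl(y_*-2/y_*\bigr)$, so the radius $y_*=\sqrt{2}$ of the stationary cylindrical shrinker is the natural dividing value: for $y_*$ just above $\sqrt 2$ the term $-\sin\theta/y$ competes with a weak drift and one expects the profile to reach the axis with $\theta$ still strictly less than $\pi$ (the curve is ``too flat''), whereas for $y_*$ large the drift $(y\sin\theta-z\cos\theta)/2$ dominates and forces $\theta$ past $\pi$ while $y$ is still bounded away from the axis (the curve turns over too soon). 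Once one establishes $F(y_*^-)<\pi<F(y_*^+)$ together with continuity of $F$ across an interval $[y_*^-,y_*^+]$, the intermediate value theorem produces an admissible $y_*$, and hence the desired shrinker.

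The main obstacle is to make this heuristic rigorous. One must rule out the trajectory developing loops or spiralling (so that ``the first time $y=0$'' is well-defined), show that along the relevant trajectories the angle $\theta$ stays inside $(\pi/3,\pi]$ and eventually increases monotonically to $\pi$, and control the singularity of $\sin\theta/y$ as $y\to 0^+$ precisely enough to conclude that the profile hits the axis perpendicularly (so that the revolved surface is smooth there). The natural tools are barrier and monotonicity arguments based on the Gaussian-weighted length of $\gamma$ (the Lyapunov functional underlying Huisken's monotonicity formula), together with phase-plane comparisons with the stationary cylindrical shrinker of radius $\sqrt{2}$ and with the plane $\{z=0\}$. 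The most delicate step I anticipate is the quantitative analysis at the two extremes of the shooting interval required to prove $F(y_*^-)<\pi$ and $F(y_*^+)>\pi$, because in both regimes the sign of $\theta'$ is determined by a cancellation between two terms that are each large in absolute value.
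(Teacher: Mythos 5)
The overall architecture you propose---reduce to an ODE for a rotationally symmetric profile, set up a one--parameter shooting problem, and close it by an intermediate value argument---is the same as the paper's. The critical difference is the direction of the shooting, and that difference is not cosmetic.

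You shoot \emph{from} the horizontal axis (imposing the $120^\circ$ condition there, with the radius $y_*$ as parameter) \emph{toward} the vertical axis, aiming for smooth closure $y(L)=0$, $\theta(L)=\pi$. The paper does the reverse: it shoots from the vertical axis (imposing the horizontal--tangent condition there, with the height $a$ of the point $(0,a)$ as parameter) toward the horizontal axis, aiming for the $60^\circ$ intersection angle. This reversal matters because the axis $y=0$ is where the ODE degenerates. In your formulation the degenerate, fine--tuned condition is the \emph{terminal} condition, and your shooting map $F(y_*)$ (``the value of $\theta$ at the first time $y$ reaches $0$'') is in general not defined: near $y=0$ with $\theta\in(\pi/2,\pi)$ one has $\sin\theta>0$, so $\theta'\approx -\sin\theta/y\to-\infty$ and $\theta$ is driven sharply downward; once $\theta$ drops below $\pi/2$, $\cos\theta>0$ and $y$ starts increasing again. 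In other words the axis acts as a repeller and a generic trajectory bounces off without touching it. Defining a substitute for $F$ (you mention one is needed), and then establishing $F(y_*^-)<\pi<F(y_*^+)$ with that substitute, is exactly where the hard analysis sits, and the proposal leaves it open. The same difficulty makes the other ingredients you list (monotone approach $\theta\nearrow\pi$, control of $\sin\theta/y$ as $y\to0$, ruling out spiralling) nontrivial.

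The paper avoids this by treating the singularity once, at the \emph{initial} end. Writing the profile as a graph $y=f(x)$, $f(0)=a$, $f'(0)=0$, the operator $\mT h=h''+h'/x$ is inverted explicitly (Lemma~\ref{lemma:mL0 inv}) and a fixed--point argument (Lemmas~\ref{thm:LWP generico C2},~\ref{lemma:LWP}) produces a unique $C^2$ solution through the degenerate point. After that, the relevant transversality is a consequence of an integral formula for the curvature (Lemma~\ref{lemma:intercono}, formula~\eqref{u.17}) which is special to solutions \emph{starting} at the axis; it yields the uniform lower bound~\eqref{u.18.glob} on $(\gamma/|\gamma|)\cdot\nu$ which forces the curve to hit $v=0$ with no self--intersection (Corollary~\ref{continuous bar s}). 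Crucially, the two IVT endpoints are then easy: $a=\sqrt2$ gives the explicit quarter circle intersecting the horizontal axis at $90^\circ$, and $a\to0$ gives, by a power--series perturbation of the linear operator $\mL$ (Section~\ref{sec:analytic}, Lemma~\ref{va a zero}), an intersection angle tending to $0^\circ$. Neither endpoint claim in your proposal comes with a comparably concrete mechanism; as you rightly flag, both involve cancellations between two competing large terms, and that delicate regime is precisely what the paper's choice of shooting direction is designed to avoid.
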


Let us introduce the analytic setting we use to prove this theorem. 
Let $\gamma:I\to\R^2$ be the curve depicted in Figure~\ref{fig2}. 
As it is well known, at every point $x\in\R^3$ of a shrinker $S$ the equation 
\begin{equation*}
\overline{\mathrm{H}}+x^\perp=0
\end{equation*}
must be satisfied, 
where $\overline{\mathrm{H}}$ is the mean curvature vector of $S$ and $x^\perp$ denotes the projection of the position vector $x$ on the normal space to $S$. 
By the assumed rotational symmetry, it is easy to compute the equation that the curve $\gamma$ must satisfy (which is actually a system of ODEs for $\gamma$), obtaining
(see~\cite[Section~9]{freire3} where such an equation is computed writing $\gamma$ as a graph on the horizontal line)
\begin{equation} \label{geo.1}
k + \g \cdot \nu + \frac{ \g_s \cdot e_2 }{ \g \cdot e_1 } = 0,
\end{equation}
where $\{e_1,e_2\}$ is the canonical basis of $\R^2$, 
$e_2$ is the unit vector of the rotation axis, 
and, introducing an arclength parameter $s$ on the curve $\gamma$, 
the unit tangent vector is $\tau = \gamma_s = \frac{d\,}{ds}\gamma$, 
the unit normal vector $\nu$ is the counterclockwise rotation of $\pi/2$ in $\R^2$ of the vector
$\tau$, and finally, if $\overline{k}=\gamma_{ss}=\frac{d^2\,}{ds^2}\gamma$ is the curvature vector of $\gamma$, we set $k=\overline{k}\cdot\nu$.

Moreover, in order that the rotation of the curve $\gamma$ around the vertical axis describes a smooth surface intersecting the horizontal plane with an angle of $60$ degrees, 
we need to require that 
\begin{itemize}
\item $\gamma(0)\cdot e_1 =0$,
\item $\gamma_s(0)\cdot e_2=0$ (horizontal tangent at the intersection with the vertical axis),
\item $\gamma(L)\cdot e_2 =0$,
\item $\gamma_s(L)\cdot e_1=1/2$ 
(exterior angle of $120$ degrees with the horizontal plane),
\item $\g(s) \cdot e_2 > 0$ for all $s \in [0,L)$,
\item $\g(s) \cdot e_1 > 0$ for all $s \in (0,L]$,
\item $\g$ has no self--intersections,
\end{itemize}
where $L$ is the (a priori unknown) length of the curve $\gamma$.

\begin{remark}\label{2222}
In the analogous one--dimensional problem of a network of curves in the plane moving by curvature (see~\cite{mannovtor,mannovplusch}, for instance), every curve $\gamma:I\to\R^2$ of a shrinking network must satisfy the equation 
\begin{equation}\label{1111}
k+\gamma\cdot\nu=0
\end{equation}
(see the works by Abresch--Langer~\cite{ablang1} and of
Epstein--Weinstein~\cite{epswei}, where a full classification in the case of a single closed curve is obtained). The ``extra term'' $\frac{ \g_s \cdot e_2 }{ \g \cdot e_1 } $ in equation~\eqref{geo.1} makes all the analysis more complicated, since its presence ``destroys'' the existence of a constant quantity (a first integral of the system of ODEs) along the curve which can be derived for the solutions of equation~\eqref{1111} (see~\cite[Section~2]{balhausman}).
\end{remark}

We set some notation that will be used through all the paper. We denote with $u$ and $v$ the components of the curve $\gamma$, that is, $\g(s) = (u(s), v(s))$, where $s$ is the arclength parameter
\begin{equation} \label{geo.2}
u'^2(s) + v'^2(s) = 1,
\end{equation}
and $'$ denotes $\frac{d}{ds}$. 
The curvature is given by 
\begin{equation} \label{geo.3}
k = \g_{ss} \cdot \nu = \begin{pmatrix} u'' \\ v'' \end{pmatrix}
\cdot \begin{pmatrix} -v' \\ u' \end{pmatrix}
= - v' u'' + u' v'',
\end{equation}
then equation~\eqref{geo.1} becomes 
\begin{equation} \label{geo.4}
- v' u'' + u' v'' + \frac{ v' }{ u } - u v' + v u' = 0
\end{equation}
(the term $v' / u$ in equation~\eqref{geo.4} is the ``extra term'' with respect to the corresponding equation for shrinking networks, which we mentioned in Remark~\ref{2222}).
Equation~\eqref{geo.4}, together with~\eqref{geo.2}, can be written 
as a system of two second order ODEs in $(u,v)$: 
since $u' u'' + v' v'' = 0$, one has 
\begin{equation} \label{geo.5}
\begin{cases} 
u'' = - v'^2 \big( u - \frac{1}{u} \big) + u' v' v 
\\
v'' = v' u' \big( u - \frac{1}{u} \big) - u'^2 v.
\end{cases}
\end{equation}

Theorem~\ref{thm:main} on the existence of a ``lens shaped'' shrinker 
is then a consequence of the following result.

\begin{proposition}
\label{question:main}
There exist $a > 0$, $\bar s > 0$, and $u, v : [0, \bar s] \to \R$ of class $C^2([0, \bar s])$ 
that solve~\eqref{geo.5} on $(0, \bar s]$ such that the curve $(u,v)$: 
\begin{itemize} 
\item 
intersects the vertical axis at the point $(0,a)$ with horizontal tangent, 
namely 
\begin{equation} \label{init cond}
u(0) = 0, \quad 
v(0) = a, \quad 
u'(0) = 1, \quad 
v'(0) = 0;
\end{equation}

\item
it reaches the horizontal axis forming an angle of $60$ degrees, namely 
\begin{equation} \label{touch cond}
u(\bar s) > 0, \quad 
v(\bar s) = 0, \quad 
u'(\bar s) = \frac12, \quad 
v'(\bar s) = - \frac{\sqrt3}{2};
\end{equation}

\item
it remains in the region $u > 0$, $v > 0$ on the interval $(0, \bar s)$ 
and it does not self--intersect.
\end{itemize}
\end{proposition}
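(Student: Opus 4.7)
The natural strategy is a shooting argument in the initial height $a > 0$: for each $a$ solve \eqref{geo.5} forward in arclength from the data \eqref{init cond}, then tune $a$ so that the solution hits the horizontal axis with the prescribed angle. Two technical issues must be addressed at the outset: the right-hand side of \eqref{geo.5} is singular at $u = 0$, and one must control the maximal arclength of existence inside the admissible region $\{u > 0,\, v > 0\}$. Observe that $v'/u$ and $u - 1/u$ individually blow up at $s = 0$, but from $v'(s) \sim -as$ and $u(s) \sim s$ one has $v'^2/u \sim a^2 s \to 0$, so the equations extend continuously through $s = 0$ with $u''(0) = 0$ and $v''(0) = -a$.

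The first step is to construct, for every $a > 0$, a local $C^2$ solution $(u_a, v_a)$ of \eqref{geo.5} satisfying \eqref{init cond}, depending continuously on $a$. I would plug the ansatz $u(s) = s + s^3 \phi(s)$, $v(s) = a + s^2 \psi(s)$ into \eqref{geo.5} and rewrite the system as a regular fixed-point equation for $(\phi, \psi)$, solvable by Banach's contraction principle on a short interval $[0, s_0]$. The arclength constraint \eqref{geo.2} is automatically preserved along any solution of the system since $\frac{d}{ds}(u'^2 + v'^2) = 2(u' u'' + v' v'') \equiv 0$ on \eqref{geo.5}, so it only needs to be imposed at $s = 0$. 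Standard ODE theory then extends $(u_a, v_a)$ uniquely to a maximal interval $[0, S(a))$ on which $u_a > 0$, with continuous dependence on $a$.

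Define $\bar s(a)$ to be the first positive $s$ at which $v_a(s) = 0$, and let $\th(a)$ denote the angle that the unit tangent $(u_a'(\bar s(a)), v_a'(\bar s(a)))$ makes with the positive horizontal direction. By \eqref{touch cond} the target is to find $a^*$ with $\bar s(a^*) < \infty$, $u_{a^*}(\bar s(a^*)) > 0$ and $\th(a^*) = -\pi/3$. I would show that the set $\mathcal{O} = \{a > 0 : \bar s(a) < \infty,\, u_a > 0 \text{ on } [0, \bar s(a)]\}$ is open and $\th$ is continuous on $\mathcal{O}$ (transversality $v_a'(\bar s(a)) \neq 0$ is needed to get continuity of $\bar s$), and then exhibit values $a_0, a_1 \in \mathcal{O}$ with $\th(a_0) > -\pi/3$ and $\th(a_1) < -\pi/3$; the intermediate value theorem finishes the existence. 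The sign conditions $u > 0$, $v > 0$ on $(0, \bar s)$ and embeddedness follow from the very definition of $\bar s$ together with an angle-monotonicity/winding argument along $\gamma$ to exclude tangential re-entries into the axes.

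The main obstacle will be the analysis of the two extremes of the shooting parameter. Unlike the pure curve shrinker equation \eqref{1111}, the system \eqref{geo.5} lacks the Abresch--Langer type first integral (see Remark~\ref{2222}), so the asymptotics as $a \to 0^+$ and $a \to +\infty$ must be obtained by a combination of rescaling near the singular origin of the phase portrait and comparison with explicit arcs (heuristically, for $a$ large the curve mimics a round shrinking sphere and hits the axis with nearly vertical tangent $\th(a) \approx -\pi/2$; for $a$ small the strong $v'/u$ term forces the tangent to turn very little before $v$ reaches $0$, giving $\th(a)$ close to $0$). Establishing these two-sided bounds rigorously, together with the non-degeneracy of $\bar s(a)$ throughout the relevant range of $a$, is the technical heart of the proof.
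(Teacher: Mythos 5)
Your outline shares the paper's top-level idea (a shooting argument in the initial height $a$, with local existence handled by a contraction that absorbs the singularity at $u=0$, plus continuity of the hitting angle and the intermediate value theorem), but there are two substantial gaps and one small error.

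First, your choice of endpoints for the shooting is not the paper's, and the large-$a$ end of your range is not clearly feasible. You propose to show $\th(a)\to -\pi/2$ as $a\to+\infty$ by a heuristic comparison with a round sphere; nothing in the equation supports this and it would require its own nontrivial analysis. The paper instead observes that $(u,v)(s)=(\sqrt2\sin(s/\sqrt2),\sqrt2\cos(s/\sqrt2))$ is an \emph{exact} solution of~\eqref{geo.5}--\eqref{init cond} with $a=\sqrt2$ and hits the horizontal axis vertically, so the shooting range is $(0,\sqrt2]$ and the large-$a$ endpoint is trivial. This explicit circle is a key simplification you are missing. For the small-$a$ endpoint, the heuristic ``the $v'/u$ term forces the tangent to turn very little'' is not how the paper argues; the paper works in an analytic class where the \emph{entire} linear operator $\mL h=h''+(\tfrac1x-x)h'+h$ can be inverted (Lemma~\ref{lemma:mL inv}), which yields an existence interval of length $\gg 1$ for small $a$ and the quantitative limit $h_a/a\to -J$ in $C^1$ (Lemma~\ref{va a zero}); this is where $\th(a)\to 0$ comes from. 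Your proposed ``rescaling near the singular origin of the phase portrait'' is a placeholder, not an argument.

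Second, and more importantly, you have no mechanism to prove that for every $a$ in the shooting range the solution actually reaches the horizontal axis while staying in $\{u>0,v>0\}$ and without self-intersections; you gesture at ``an angle-monotonicity/winding argument'' but do not say why the angle is monotone. This is precisely what the paper identifies as the core of the proof: from the ODE for $k$ along the curve one derives the integral representation~\eqref{u.17}, which is specific to solutions starting from the vertical axis with horizontal tangent, and from it the uniform transversality bound $\frac{-uv'+vu'}{\sqrt{u^2+v^2}}>K_a>0$ (Corollary~\ref{cor:gira glob}). In polar coordinates this gives $\th'<-c_a<0$ and $e^{-C_a\pi/2}<\rho<e^{C_a\pi/2}$, from which finite length, no self-intersection, and the eventual hit of the axis all follow, together with the continuity of $\bar s_a$ and $\th(a)$. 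Without this (or an equivalent) quantitative transversality estimate, the shooting argument does not close: openness of your set $\mathcal O$ and continuity of $\th$ on it are not enough, because you also need $\mathcal O$ to contain the entire interval between your two test values.

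A minor point: from the self-consistency relation $v''(0)=-v''(0)-a$ one gets $v''(0)=-a/2$, not $-a$ as you wrote; the paper records this in~\eqref{u.7}. This matters if you are going to pin down the leading coefficients in your ansatz $v=a+s^2\psi(s)$ before running the fixed-point argument.
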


\emph{Strategy of the proof.}
We prove Proposition~\ref{question:main} by a continuity argument. 
On one hand, it is known (and immediate to check) that the circle 
$(u(s), v(s)) = ( \sqrt2 \, \sin(s/\sqrt2) ,$ $\sqrt2 \, \cos(s/\sqrt2) )$, 
$s \in [0, \pi / \sqrt2]$ solves~\eqref{geo.5}--\eqref{init cond} with $a = \sqrt2$, 
and it intersects the horizontal axis with an angle of 90 degrees. 
On the other hand, for $a$ small the solution of the nonlinear equation~\eqref{geo.5} 
is close to the solution of the corresponding linear problem, therefore, by a perturbation analysis, we prove that the curve intersects 
the horizontal axis with a small angle. 
As a consequence, we deduce Proposition~\ref{question:main} for some $a \in (0, \sqrt2)$.

To realize this strategy, we have to deal with several points.
First, we prove local existence for the Cauchy problem~\eqref{geo.5}--\eqref{init cond}. 
This does not follow directly from the classical ODEs theory, because the problem is \emph{degenerate} as $u \to 0$. 
In Section~\ref{sec:LWP} we write $(u,v)$ as the graph of a function $y = f(x)$ 
(without loss of generality, at least locally, by~\eqref{init cond})
and we prove existence and uniqueness for the problem~\eqref{1010.1}-\eqref{init cond f} 
for $f(x)$ in class $C^2$, 
making $\pa_{xx} + \frac{1}{x} \pa_x$ play the r\^ole of the highest order operator, 
see~\eqref{2305.1}-\eqref{def mT}.

Second, we analyse the case of $a$ small. 
It is natural to expect that the smaller is $a$, the longer is the existence interval of the solution. However, the interval given by the 
$C^2$ existence result of Section~\ref{sec:LWP} does not go beyond a fixed threshold. 
This is not sufficient for our perturbation analysis, 
because such a threshold is smaller than the point at which the solution of the linear problem 
reaches the horizontal axis. 
We get around this issue by observing that the entire linear part~\eqref{def mL} 
of the equation can be inverted in a suitable space of analytic functions, 
see Lemma~\ref{lemma:mL inv}. 
As a consequence, we get a longer existence interval for solutions with small $a$. 
Hence in the analytic class we are able to carry out the perturbation analysis, 
proving that for $a$ small the solution intersects the horizontal axis with a small angle. 
This is the content of Section~\ref{sec:analytic}.

Third, we have to prove that for all $a \in (0, \sqrt2)$ 
the continuation of the local solution of~\eqref{geo.5}-\eqref{init cond} 
constructed in Section~\ref{sec:LWP} 
reaches the horizontal axis without self-intersecting and without touching the vertical axis.
In Section~\ref{sec:extension graph} we prove that for all $a > 0$ 
the solution of the Cauchy problem remains the graph of a function $y = f(x)$ 
in the vertical strip $x \in [0,1]$, using classical comparison arguments for ODEs. 
Then, the analysis of Section~\ref{sec:TG} is in some sense the core and the most original part of the proof. To begin with, we obtain a useful integral formula for the curvature $k$, 
see~\eqref{u.17}. An interesting feature of this formula is that it incorporates the initial condition~\eqref{init cond}, in the sense that~\eqref{u.17} is not satisfied by all the solutions 
of equation~\eqref{geo.5}, but only by those starting from the vertical axis with horizontal 
tangent vector. 
Using formula~\eqref{u.17}, we prove the lower bound~\eqref{u.18.glob}, 
which is a quantitative version of the transversality 
of the tangent vector $\g_s$ with respect to the position vector $\g$. 
As a consequence, we deduce that the solution reaches the horizontal axis 
without self-intersecting and without touching the vertical axis. 
The continuous dependence of the angle at the horizontal axis on the initial height $a$ 
is also a consequence of the transversality property above. 

\medskip

As a final remark, we underline that our proof, which is merely based on the continuity 
of the function that maps the height $a$ to the intersection angle, 
does not give any information about the uniqueness 
of such rotationally symmetric ``lens shaped'' shrinker, 
which remains an open question. Moreover, it seems quite difficult to classify all the shrinkers with this ``lens'' structure (topologically), even assuming for instance convexity (or non--negativity of the mean curvature) of the three surfaces, we actually conjecture that they all should be rotationally symmetric.

\bigskip

\emph{Acknowledgements.} 
We thank Matteo Novaga and Alessandra Pluda for several comments and discussions, 
and Giacomo Ascione for some useful numerical simulations.
We also thank Alessandra Pluda for drawing the figures.

\section{General local existence}
\label{sec:LWP}

As a starting point of our analysis, we look for a profile $(u,v)$ 
that is the graph of a function, 
\begin{equation} \label{geo.6}
v(s) = f(u(s)).
\end{equation}
From~\eqref{geo.6} one has
\begin{equation} \label{geo.7}
v' = f'(u) u', \quad 
v'' = f''(u) u'^2 + f'(u) u'',
\end{equation}
and, since $v'^2 = 1 - u'^2$, 
\begin{equation} \label{geo.8}
u'^2 (1 + f'^2(u)) = 1.
\end{equation}
Substituting in~\eqref{geo.4} 
(and renaming $u \to x$) 
we find the equation for the profile $f$, which is 
\begin{equation} \label{1010.1}
f''(x) = \big( 1 + f'^2(x) \big) \Big[ f'(x) \Big( x - \frac{1}{x} \Big) - f(x) \Big].
\end{equation}
Because of the coefficient $1/x$, equation~\eqref{1010.1} makes sense on $x > 0$. 
Thus we look for a function $f$ of class $C^2$ on some interval $[0,r]$ 
that satisfies the initial conditions 
\begin{equation} \label{init cond f}
f(0) = a, \quad f'(0) = 0
\end{equation}
and solves~\eqref{1010.1} on $x \in (0,r]$.

It is convenient to write $f(x)$ as $a + h(x)$. 
Therefore we look for classical solutions $h \in C^2([0,r])$ 
on some interval $[0,r]$ of the Cauchy problem
\begin{equation} \label{Cp h}
\begin{cases} 
h''(x) = \big( 1 + h'^2(x) \big) \big[ h'(x) \big( x - \frac{1}{x} \big) - h(x) - a \big] \\
h(0) = h'(0) = 0.
\end{cases}
\end{equation}

In this section we prove the existence and uniqueness in class $C^2$ 
of a local solution of the degenerate Cauchy problem~\eqref{Cp h} for any $a > 0$. 
We write~\eqref{Cp h} as 
\begin{equation} \label{2305.1}
\mT h = P(h,a)
\end{equation}
where 
\begin{align} 
\label{def mT}
\mT h (x) & := h''(x) + \frac{h'(x)}{x}, 
\\
P(h,a)(x) & := 
x h'(x) - h(x) - a + h'^2(x) \big[ h'(x) \big( x - \tfrac{1}{x} \big) - h(x) - a \big]. 
\label{def P}
\end{align}
For $r>0$, we define
\[
C^2_0([0,r]) := \{ h \in C^2([0,r], \R) : h(0) = h'(0) = 0 \}
\]
with norm $\| h'' \|_{C^0([0,r])}$. 
We look for solutions of the nonlinear equation~\eqref{2305.1} in $C^2_0([0,r])$. 
We begin with studying the linear problem $\mT h = g$.

\begin{lemma} \label{lemma:mL0 inv} 
Let $r > 0$. 
For every $g \in C^0([0,r])$ there exists a unique $h \in C^2_0([0,r])$ 
such that $\mT h = g$. 
This defines the inverse operator 
$\mT^{-1} : C^0([0,r]) \to C^2_0([0,r])$, $g \mapsto h = \mT^{-1} g$. 
Moreover 
\begin{equation} \label{stima mL0 inv}
\| (\mT^{-1} g)'' \|_{C^0([0,r])} = \| h'' \|_{C^0([0,r])}
\leq \frac32 \| g \|_{C^0([0,r])}.
\end{equation}
\end{lemma}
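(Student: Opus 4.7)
The key observation is that the operator $\mT$ can be written as $\mT h(x) = \frac{1}{x}(x h'(x))'$, so that the equation $\mT h = g$ becomes the integrable first-order equation $(x h'(x))' = x g(x)$ for the unknown $x h'(x)$. The plan is to integrate this explicitly and then verify that the resulting $h$ lies in $C^2_0([0,r])$ with the stated bound.

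First I would establish uniqueness. If $h \in C^2_0([0,r])$ satisfies $\mT h = 0$, then $(xh'(x))' = 0$ on $(0,r]$, hence $xh'(x)$ is constant. Since $h \in C^1([0,r])$ with $h'(0) = 0$, we have $x h'(x) \to 0$ as $x \to 0^+$, so this constant is $0$, whence $h' \equiv 0$ on $(0,r]$ and $h \equiv 0$ by $h(0) = 0$.

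For existence I would set, given $g \in C^0([0,r])$,
\begin{equation*}
h'(x) := \frac{1}{x}\int_0^x t\, g(t)\, dt, \qquad h(x) := \int_0^x h'(s)\, ds,
\end{equation*}
and verify the regularity properties directly. The bound $\left| \int_0^x t g(t)\, dt \right| \le \|g\|_{C^0} x^2 / 2$ shows that $h'(x) \to 0$ as $x \to 0^+$, so the extension $h'(0) := 0$ makes $h'$ continuous on $[0,r]$ and $h \in C^1_0([0,r])$. Differentiating the formula for $h'$ on $(0,r]$ yields
\begin{equation*}
h''(x) = g(x) - \frac{h'(x)}{x} = g(x) - \frac{1}{x^2}\int_0^x t g(t)\, dt,
\end{equation*}
which shows $\mT h = g$ on $(0,r]$. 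Writing $g(t) = g(0) + (g(t) - g(0))$, a short computation gives $h'(x)/x \to g(0)/2$ as $x \to 0^+$, so $h''(x) \to g(0)/2$ and, after setting $h''(0) := g(0)/2$, the function $h''$ is continuous on $[0,r]$; hence $h \in C^2_0([0,r])$.

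Finally, the quantitative estimate comes straight from the same formula: for every $x \in (0,r]$,
\begin{equation*}
|h''(x)| \le |g(x)| + \frac{1}{x^2}\int_0^x t\, |g(t)|\, dt \le \|g\|_{C^0([0,r])} + \frac{1}{2}\|g\|_{C^0([0,r])} = \frac{3}{2}\|g\|_{C^0([0,r])},
\end{equation*}
and the same bound holds at $x=0$ by continuity, proving~\eqref{stima mL0 inv}. The only subtle point is the analysis of the degenerate endpoint $x=0$, both for the continuity of $h''$ and for pinning down the constant in the uniqueness argument; once $\mT$ is recognised in the form $\frac{1}{x}(x\,\cdot\,')'$, however, this is routine.
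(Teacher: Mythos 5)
Your proof is correct and takes essentially the same approach as the paper (variation of constants via the factorization $\mT h = \frac{1}{x}(x h')'$, the same explicit formula for $h'$ and $h''$, and the same elementary estimate). If anything your write-up is more explicit than the paper's about the degenerate endpoint, giving the uniqueness argument and verifying that $h''(x) \to g(0)/2$ so that $h'' \in C^0([0,r])$, details the paper leaves to the reader.
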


\begin{proof}
Consider the equation $h'' + \frac{1}{x} h' = g$. 
By variation of constants,
\[
h'(x) = \frac{c}{x} + \frac{1}{x} \int_0^x t g(t) \, dt, \quad c \in \R.
\]
Since $g$ is continuous, the second term vanishes as $x \to 0$. 
Then the condition $h'(0) = 0$ determines $c=0$. 
Integrating by parts and using $h(0) = 0$, we find the solution
\begin{equation} \label{formula h}
h(x) = \int_0^x (\log x - \log t) t g(t) \, dt.
\end{equation}
Then 
$
h''(x) = - \frac{1}{x^2} \int_0^x t g(t) \, dt + g(x),
$
and~\eqref{stima mL0 inv} follows.
\end{proof}

\begin{lemma}
\label{thm:LWP generico C2}
Let $a, r, R, L$ be real positive numbers satisfying
\begin{align} \label{1311.4 C2}
& \frac32 a + \frac94 r^2 R + \frac32 a r^2 R^2 
+ \frac32 \Big( r^2 + \frac32 r^4 \Big) R^3
\leq R,
\\
& r^2 \Big( \frac94 + \frac92 R^2 + 3 a R^2 + \frac{27}{4} R^2 r^2 \Big) 
\leq L < 1.
\label{1311.5 C2}
\end{align}
Then there exists a unique function $h \in C^2_0([0,r])$ 
that solves the Cauchy problem~\eqref{2305.1}. 
Moreover $\| h'' \|_{C^0([0,r])} \leq R$.
As a consequence, the function $f(x) = a + h(x)$ solves equation~\eqref{1010.1} 
with initial data $f(0) = a$, $f'(0) = 0$. 
\end{lemma}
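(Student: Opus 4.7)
\emph{Proof plan.}
The idea is to recast \eqref{2305.1} as the fixed-point equation $h = \Phi(h)$, with $\Phi(h) := \mT\inv P(h, a)$, and apply the Banach contraction principle on the closed ball $B_R := \{ h \in C^2_0([0,r]) : \|h''\|_{C^0([0,r])} \leq R \}$ inside the Banach space $(C^2_0([0,r]), \|\cdot''\|_{C^0([0,r])})$. Lemma~\ref{lemma:mL0 inv} provides the inverse $\mT\inv : C^0([0,r]) \to C^2_0([0,r])$ with operator norm at most $\tfrac{3}{2}$, so as soon as $P(h,a) \in C^0([0,r])$ one has $\Phi(h) \in C^2_0([0,r])$ with $\|\Phi(h)''\|_{C^0} \leq \tfrac{3}{2} \|P(h,a)\|_{C^0}$.

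The key ingredient is a uniform a priori control of $h$, $h'$, and $h'/x$ in terms of $\|h''\|_{C^0}$. For any $h \in C^2_0([0,r])$ with $\|h''\|_{C^0} \leq R$, the fundamental theorem of calculus applied to $h'$ (which vanishes at $0$) and then to $h$ gives
\[
|h'(x)| \leq x R, \qquad |h(x)| \leq \tfrac{1}{2} x^2 R, \qquad \Big| \frac{h'(x)}{x} \Big| \leq R, \qquad x \in (0,r].
\]
The third bound is the decisive one: it shows that the apparently singular term $h'(x)/x$ in the definition \eqref{def P} of $P$ is in fact continuous on $[0,r]$ with the same bound as $h''$, so that $P(h,a) \in C^0([0,r])$. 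Plugging these three estimates term by term into \eqref{def P} and grouping by powers of $R$, one obtains
\[
\| P(h,a) \|_{C^0} \leq a + \tfrac{3}{2} r^2 R + a r^2 R^2 + \Big( r^2 + \tfrac{3}{2} r^4 \Big) R^3;
\]
multiplication by $\tfrac{3}{2}$ reproduces exactly the left-hand side of \eqref{1311.4 C2}, which is therefore the precise condition ensuring $\Phi(B_R) \subset B_R$.

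For the contraction, I would set $\delta := h_1 - h_2$ with $h_1, h_2 \in B_R$ and $D := \|\delta''\|_{C^0}$. The same argument yields $|\delta'(x)| \leq x D$, $|\delta(x)| \leq \tfrac{1}{2} x^2 D$ and $|\delta'(x)/x| \leq D$. Using the algebraic identities $\a^2 - \b^2 = (\a-\b)(\a+\b)$ and $\a^3 - \b^3 = (\a-\b)(\a^2+\a\b+\b^2)$ on the quadratic and cubic factors of $h'$ in \eqref{def P}, one decomposes $P(h_1,a) - P(h_2,a)$ into a finite sum of products, each containing exactly one factor of type $\delta, \delta'$ or $\delta'/x$, while the remaining factors are controlled by the a priori estimates for $h_1, h_2$. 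After multiplication by $\tfrac{3}{2}$, the resulting bound takes the form $\|\Phi(h_1) - \Phi(h_2)\|_{C^2_0} \leq L_0 D$, with $L_0$ majorized by the left-hand side of \eqref{1311.5 C2}; the hypothesis $L_0 \leq L < 1$ then provides the contraction. The Banach fixed point theorem produces the unique $h \in B_R$ solving $h = \Phi(h)$, and the statement about $f = a + h$ is a direct restatement of \eqref{2305.1} as \eqref{1010.1} together with the initial conditions \eqref{init cond f}.

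The main obstacle is not analytical depth but rather bookkeeping: one must track carefully each place where the factor $1/x$ enters \eqref{def P} and check that the vanishing rate of $h'$ at the origin compensates it. Once the three pointwise bounds above are isolated, the rest of the argument is a routine Banach contraction scheme, in which \eqref{1311.4 C2}--\eqref{1311.5 C2} play the role of the precise quantitative versions of ``$r$ sufficiently small with respect to $a, R$ and $L$''.
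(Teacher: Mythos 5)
Your proposal is correct and follows essentially the same route as the paper: write the problem as the fixed point $h=\Phi(h)$ with $\Phi=\mT^{-1}P(\cdot,a)$, use $|h'(x)|\le x\|h''\|_{C^0}$, $|h(x)|\le\tfrac12 x^2\|h''\|_{C^0}$ (hence $|h'(x)/x|\le\|h''\|_{C^0}$) to bound $P$ and its differences on the ball $B_{R,r}$, and invoke the Banach contraction principle with \eqref{1311.4 C2}--\eqref{1311.5 C2} as the precise invariance and contraction conditions. Your explicit isolation of the bound $|h'(x)/x|\le R$, which neutralizes the $1/x$ singularity in $P$, is a useful spelling-out of a step the paper leaves implicit, but it does not change the argument.
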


\begin{proof}
By Lemma~\ref{lemma:mL0 inv}, equation~\eqref{2305.1} 
can be written as the fixed point problem in $C^2_0([0,r])$
\begin{equation} \label{nonlin mL C2}
h = \Phi(h) \quad \ \text{where} \ 
\Phi(h) := \Phi(h,a) := \mT^{-1} P(h,a).
\end{equation}
We prove that, for suitable $r,R$, 
the map $\Phi$ is a contraction in the ball 
\begin{equation} \label{ball C2}
B_{R,r} := \{ h \in C^2_0([0,r]) : \| h'' \|_{C^0([0,r])} \leq R \}.
\end{equation}
By~\eqref{stima mL0 inv}, 
using the inequalities $|h'(x)| \leq x \| h'' \|_{C^0([0,r])}$ and 
$|h(x)| \leq \frac12 r^2 \| h'' \|_{C^0([0,r])}$, 
one has 
\[ 
\| (\Phi(h))'' \|_{C^0([0,r])} 
\leq \frac32 \| P(h,a) \|_{C^0([0,r])} 
\leq \frac32 a + \frac94 r^2 R + \frac32 a r^2 R^2 
+ \frac32 \Big( r^2 + \frac32 r^4 \Big) R^3,
\]
and
\[
\| ( \Phi(h) - \Phi(g) )'' \|_{C^0([0,r])} 
\leq r^2 \Big( \frac94 + \frac92 R^2 + 3 a R^2 + \frac{27}{4} R^2 r^2 \Big) 
\| (h-g)'' \|_{C^0([0,r])}.
\]
for all $h,g \in B_{R,r}$. 
Assumptions~\eqref{1311.4 C2}-\eqref{1311.5 C2} 
imply that $\Phi$ is a contraction of the ball $B_{R,r}$ into itself.
\end{proof}

\begin{lemma} \label{lemma:LWP}
Let $a > 0$. Let $r = r_a$, $R = R_a$ be defined by
\[
R_a := 6a, \quad 
r_a := \min \Big\{ \frac{1}{3\sqrt2}, \, 
\frac{1}{36 a}, \, 
\frac{1}{12 \sqrt 6 \, a^{3/2}} \Big\}.
\]
Then~\eqref{1311.4 C2}-\eqref{1311.5 C2} hold with $L = 1/2$. 

As a consequence, for every $a > 0$ there exists a unique solution $h \in C^2_0([0, r_a])$ 
of the Cauchy problem~\eqref{2305.1}, and it satisfies $\| h'' \|_{C^0([0, r_a])} 
\leq 6a$.
\end{lemma}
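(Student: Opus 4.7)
This lemma is a verification: the goal is to plug $R = R_a = 6a$ and the three-term minimum $r_a$ into the abstract hypotheses \eqref{1311.4 C2}--\eqref{1311.5 C2} of Lemma~\ref{thm:LWP generico C2} and check that both inequalities hold with $L = 1/2$. The conclusion (existence, uniqueness in $C^2_0([0,r_a])$, and the bound $\|h''\|_{C^0} \le 6a$) then follows directly from that lemma.

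\textbf{Organisation of the check.} The three candidates in the definition of $r_a$ each correspond to bounding one ``type'' of term. Substituting $R = 6a$ into \eqref{1311.4 C2} and dividing by $a$, the inequality becomes
\[
\tfrac32 + \tfrac{27}{2} r^2 + 378 \, a^2 r^2 + 486 \, a^2 r^4 \le 6,
\]
while \eqref{1311.5 C2} becomes
\[
\tfrac{9}{4} r^2 + 162 \, a^2 r^2 + 108 \, a^3 r^2 + 243 \, a^2 r^4 \le \tfrac12.
\]
Thus I would verify each summand separately using the relevant clause of the min in $r_a$: the bound $r \le 1/(3\sqrt2)$ gives $r^2 \le 1/18$ and controls the pure $r^2$ terms; the bound $r \le 1/(36a)$ gives $(ar)^2 \le 1/1296$ and controls every term of the form $a^2 r^2$ or $a^2 r^4$; and the bound $r \le 1/(12\sqrt6\, a^{3/2})$ gives $a^3 r^2 \le 1/864$ and is exactly what is needed for the lone $a^3 r^2$ term appearing in \eqref{1311.5 C2}. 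Adding up the resulting numerical contributions (each of order $1/8$ or smaller) yields totals comfortably below $6$ and $1/2$, respectively.

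\textbf{Conclusion.} Once both inequalities are verified, Lemma~\ref{thm:LWP generico C2} applies and produces the unique solution $h \in C^2_0([0,r_a])$ of \eqref{2305.1} with $\|h''\|_{C^0([0,r_a])} \le R_a = 6a$.

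\textbf{Where the work lies.} There is no real mathematical obstacle here: the proof is pure bookkeeping. The only mild subtlety is that the three clauses of $r_a$ must be tailored to three different scaling regimes in $a$ ($a$-independent, linear in $a$, and like $a^{3/2}$), reflecting that the nonlinear terms $h'^2(x \,h' - h - a)$ and $h'^2(-h'/x)$ in $P(h,a)$ produce contributions of different homogeneity in $a$ once one substitutes the a priori bound $\|h''\|_{C^0} \le 6a$. Identifying \emph{a posteriori} which constraint on $r$ is saturated by which term (as sketched above) is the one place where care is needed.
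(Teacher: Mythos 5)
Your proposal is correct and follows the same route as the paper, which disposes of this lemma with the single line ``It is a straightforward check'': you plug $R=6a$ into \eqref{1311.4 C2}--\eqref{1311.5 C2}, and your expanded displays together with the assignment of each clause of $r_a$ to a specific term (pure $r^2$, $a^2r^2$, $a^3r^2$) are accurate. The only thing you stop short of is carrying out the final sum; doing so confirms it, the tighter of the two being
\[
\tfrac94 r^2 + 162\,a^2 r^2 + 108\,a^3 r^2 + 243\,a^2 r^4
\;\le\; \tfrac18 + \tfrac18 + \tfrac18 + \tfrac1{96}
\;=\; \tfrac{37}{96} \;<\; \tfrac12 ,
\]
so the aside that the contributions are ``each of order $1/8$'' applies to \eqref{1311.5 C2} (for \eqref{1311.4 C2} the dominant $\frac{27}{2}r^2 \le \frac34$ term is larger, but the threshold there is $9/2$, so the margin is even wider).
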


\begin{proof}
It is a straightforward check.
\end{proof}

\begin{remark} \label{rem:even}
If $h \in C^2_0([0,r])$ solves~\eqref{2305.1}, 
then $g(x) := h(-x)$ solves~\eqref{2305.1} on the interval $[-r,0]$. 
Hence the even extension of $h$ is the unique solution of~\eqref{2305.1} on $[-r,r]$. 
In other words, the solutions of~\eqref{2305.1} with initial data $h(0) = h'(0) = 0$ 
are all even functions.
\end{remark}

\begin{lemma} \label{lipschitz a}
$(i)$ Let $r_* := 1 / (36 \sqrt2)$, $R_* := 6 \sqrt2$. 
For all $a \in [0, \sqrt2]$, the solution $h_a$ of~\eqref{2305.1} in Lemma~\ref{thm:LWP generico C2}
belongs to $C^2_0([0,r_*])$, with $\| h_a'' \|_{C^0([0,r_*])} \leq R_*$.

$(ii)$ The map $[0, \sqrt2] \to C^2_0([0,r_*])$, $a \mapsto h_a$ is Lipschitz, 
more precisely
\[
\| (h_{a_1} - h_{a_2})'' \|_{C^0([0,r_*])} \leq \frac{37}{12} |a_1 - a_2| \quad \forall a_1, a_2 \in [0, \sqrt2].
\]
\end{lemma}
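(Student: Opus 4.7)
The plan is to re-apply the fixed point setup of Lemma~\ref{thm:LWP generico C2} with the uniform choice $r = r_* = 1/(36\sqrt{2})$, $R = R_* = 6\sqrt{2}$, valid for every $a \in [0,\sqrt{2}]$ simultaneously. This immediately gives part~(i) and, more importantly, provides a uniform contraction constant $L \leq 1/2$ that will drive the Lipschitz estimate in part~(ii).

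For part~(i), I would simply plug $r_*^2 = 1/2592$, $R_*^2 = 72$, and the worst case $a = \sqrt{2}$ into inequalities \eqref{1311.4 C2}--\eqref{1311.5 C2} and check them numerically. In \eqref{1311.4 C2} the dominant term on the left is $\frac{3}{2}a \leq \frac{3\sqrt{2}}{2}$, which is well below $R_* = 6\sqrt{2}$, and the three remaining summands are all of order $10^{-1}$ or smaller; in \eqref{1311.5 C2} the bracketed factor is bounded by $\frac{9}{4} + \frac{9}{2}R_*^2 + 3\sqrt{2}\, R_*^2 + \frac{27}{4}R_*^2 r_*^2 < 632$, and $r_*^2 \cdot 632 < 1/2$. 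Lemma~\ref{thm:LWP generico C2} then furnishes a unique $h_a \in C^2_0([0,r_*])$ with $\|h_a''\|_{C^0([0,r_*])} \leq R_*$, uniformly in $a \in [0,\sqrt 2]$; by uniqueness this $h_a$ agrees with the restriction of the solution of Lemma~\ref{lemma:LWP} to $[0,r_*]$.

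For part~(ii), I would exploit the affine dependence of $P(h,a)$ on $a$. Using the notation $\Phi(h,a) = \mT^{-1} P(h,a)$ from \eqref{nonlin mL C2} and the fixed point identity $h_{a_i} = \Phi(h_{a_i}, a_i)$, I split
\[
h_{a_1} - h_{a_2}
= \bigl[ \Phi(h_{a_1}, a_1) - \Phi(h_{a_2}, a_1) \bigr]
+ \bigl[ \Phi(h_{a_2}, a_1) - \Phi(h_{a_2}, a_2) \bigr].
\]
The first bracket is controlled in $C^2_0([0,r_*])$ norm by $\frac{1}{2}\|(h_{a_1} - h_{a_2})''\|_{C^0([0,r_*])}$, thanks to the Lipschitz-in-$h$ estimate proved inside Lemma~\ref{thm:LWP generico C2}. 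For the second bracket, the definition \eqref{def P} yields the clean identity
\[
P(h, a_1)(x) - P(h, a_2)(x) = -(a_1 - a_2)\bigl(1 + h'(x)^2\bigr),
\]
so by \eqref{stima mL0 inv} together with the pointwise bound $|h_{a_2}'(x)| \leq x\, R_* \leq r_* R_* = 1/6$, this second bracket is bounded in norm by $\frac{3}{2} \cdot \frac{37}{36}|a_1 - a_2| = \frac{37}{24}|a_1 - a_2|$. Absorbing the first bracket into the left-hand side then produces the announced Lipschitz constant $37/12$.

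No substantial obstacle is expected. The only care needed is the numerical verification of \eqref{1311.4 C2}--\eqref{1311.5 C2} with the uniform pair $(r_*, R_*)$; the margins are very comfortable, and the identity $P(h,a_1) - P(h,a_2) = -(a_1 - a_2)(1 + h'^2)$ is precisely what makes the constant $37/12$ come out cleanly.
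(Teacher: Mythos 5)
Your proposal is correct and follows essentially the same route as the paper: verify that conditions~\eqref{1311.4 C2}--\eqref{1311.5 C2} hold uniformly in $a\in[0,\sqrt 2]$ with $(r,R,L)=(r_*,R_*,1/2)$ (the paper gets this from Lemma~\ref{lemma:LWP} plus monotonicity of $r_a$, you by a direct numerical check -- same content), then split $h_{a_1}-h_{a_2}=[\Phi(h_{a_1},a_1)-\Phi(h_{a_2},a_1)]+[\Phi(h_{a_2},a_1)-\Phi(h_{a_2},a_2)]$, absorb the first bracket via the contraction, and bound the second using $P(h,a_1)-P(h,a_2)=-(a_1-a_2)(1+h'^2)$ together with \eqref{stima mL0 inv} and $|h_{a_2}'|\le r_*R_*=1/6$. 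This is exactly the paper's argument, including the final constant $\tfrac32(1+r_*^2R_*^2)=\tfrac{37}{24}$ and the absorption factor $2$.
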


\begin{proof}
$(i)$ We apply Lemma~\ref{thm:LWP generico C2} and Lemma~\ref{lemma:LWP}. 
Since $a \mapsto r_a$ is non--increasing, 
we have $r_a \geq r_{\sqrt{2}} = r_*$ for all $a \in [0, \sqrt2]$ and
\[
\| h_a'' \|_{C^0([0,r_*])} 
\leq \| h_a'' \|_{C^0([0,r_a])}
\leq R_a = 6 a \leq 6 \sqrt 2 = R_*\,.
\]
$(ii)$ By Lemma~\ref{lemma:LWP}, for $a = \sqrt2$, $r = r_*$ and $R = R_*$, 
conditions~\eqref{1311.4 C2}-\eqref{1311.5 C2} hold with $L = 1/2$. 
In addition,~\eqref{1311.4 C2}-\eqref{1311.5 C2} also hold 
with $L = 1/2$, $r=r_*$, $R = R_*$, for all $a \in [0, \sqrt2]$.

Since $h_a$ is the fixed point of the map $\Phi_a := \Phi(\cdot, a) = \mT^{-1} P(\cdot, a)$, 
one has 
\begin{align*}
& \| (h_{a_1} - h_{a_2})'' \|_{C^0([0,r_*])} 
= \| \{ \Phi_{a_1} (h_{a_1}) - \Phi_{a_2} (h_{a_2}) \}'' \|_{C^0([0,r_*])}
\\ 
& \qquad \qquad \leq \| \{ \Phi_{a_1} (h_{a_1}) - \Phi_{a_1} (h_{a_2}) \}'' \|_{C^0([0,r_*])} 
+ \| \{ \Phi_{a_1} (h_{a_2}) - \Phi_{a_2} (h_{a_2}) \}'' \|_{C^0([0,r_*])} 
\\ 
& \qquad \qquad \leq \frac12 \| (h_{a_1} - h_{a_2})'' \|_{C^0([0,r_*])}
+ \| \{ \Phi_{a_1} (h_{a_2}) - \Phi_{a_2} (h_{a_2}) \}'' \|_{C^0([0,r_*])}.
\end{align*}
The term $\frac12 \| (h_{a_1} - h_{a_2})'' \|_{C^0([0,r_*])}$ 
is absorbed, so it remains to estimate the last term. 
For any $g \in C^2_0([0,r_*])$ with $\| g'' \|_{C^0([0,r_*])} \leq R_*$, 
we use Lemma~\ref{lemma:mL0 inv} to estimate
\begin{align*}
\| \{ \Phi_{a_1} (g) - \Phi_{a_2} (g) \}'' \|_{C^0([0,r_*])}
& = \| \{ \mT^{-1} [P(g, a_1) - P(g, a_2)] \}'' \|_{C^0([0,r_*])}
\\ & 
\leq \frac32 \| P(g, a_1) - P(g, a_2) \|_{C^0([0,r_*])}
\\ & 
= \frac32 \| 1+g'^2 \|_{C^0([0,r_*])} \, |a_1 - a_2|
\\ 
& \leq \frac32 (1 + r_*^2 R_*^2) |a_1 - a_2| = \frac{37}{24} |a_1 - a_2|\,.
\qedhere
\end{align*}
\end{proof}

\section{Analysis of small solutions}
\label{sec:analytic}

For small $a$, it is natural to expect the solutions to exist for longer times than 
$r_a = 1 / 3 \sqrt2$ given by Lemma~\ref{lemma:LWP}. 
Also, one expects the solutions of the nonlinear problem to stay close to the solutions
of the corresponding linear problem for a long time. 
To get this kind of estimates for $a$ small, 
and to obtain a precise description of the linear solutions, 
it is more convenient to work with power series. 
The point is that in power series we are able to invert the whole linear part of the equation, 
and not only $\mT$.

We write the nonlinear problem~\eqref{Cp h} as
\begin{equation} \label{nonlin mL.3}
\mL h = Q(h,a), \quad h(0) = h'(0) = 0
\end{equation}
where
\begin{align} \label{def mL}
\mL h(x) 
& := h''(x) + h'(x) \Big( \frac{1}{x} - x \Big) + h(x),
\\
Q(h,a) 
& := - a + (x - \tfrac{1}{x}) h'^3 - h'^2 (h+a).
\label{def Q}
\end{align}
For $r > 0$, we consider the space of even (recall Remark~\ref{rem:even}), 
real analytic functions 
\begin{equation} \label{def Yr}
X_r = \Big\{ f(x) = \sum_{n=0}^\infty f_n x^n : 
f_n \in \R \ \ \forall n, \ \ 
f_n = 0 \ \ \forall \text{$n$ odd}, 
\ \ \| f \|_r < \infty \Big\}
\end{equation}
with norm
\begin{equation} \label{def norma r}
\| f \|_r := \sum_{n=0}^\infty |f_n| \la n \ra r^{n-1}, \quad 
\la n \ra := \max \{ 1, n \},
\end{equation}
and its subspace
\begin{equation} \label{Xr0}
X_r^0 := \{ h \in X_r : h(0) = 0 \}.
\end{equation}

Given $f$ as in~\eqref{def Yr}, one has 
\begin{equation} \label{mL f}
\mL f(x) = \sum_{n=0}^\infty \big[ (n+2)^2 f_{n+2} - (n-1) f_n \big] x^n.
\end{equation}
Hence $\mL : X_r \to X_{r'}$ for all $r' \in (0,r)$. 

\begin{lemma} \label{lemma:kernel}
For all $0 < r_1 < r_2$, the operator $\mL : X_{r_2} \to X_{r_1}$ 
has a one--dimensional kernel $\{ c \eta : c \in \R\}$, where $\eta$ is the function 
\begin{equation} \label{def eta}
\eta(x) = \sum_{n=0}^\infty \eta_n x^n, 
\quad \eta_0 = 1, 
\quad \eta_{n+2} = \frac{n-1}{(n+2)^2} \,\eta_n \quad \forall n \geq 0,
\end{equation}
$\eta_n = 0$ for all odd $n$. 
Note that $\eta \in X_r$ for all $r > 0$. 
Also, $\eta_n < 0$ for all even $n \geq 2$. 
The difference 
\begin{equation} \label{def J}
J(x) := 1 - \eta(x) 
= - \sum_{n = 2}^\infty \eta_n x^n
= \sum_{n = 2}^\infty |\eta_n| x^n
\end{equation}
solves $\mL J(x) = 1$ for all $x > 0$, 
it satisfies 
$J(0) = J'(0) = 0$, 
$J''(0) = \frac12$, 
its derivatives $J^{(k)}(x)$ are $> 0$ for all $x > 0$, 
all $k=0,1,2,\ldots$, and 
$\| J \|_r \leq \frac12 r e^{r^2/2}$.
\end{lemma}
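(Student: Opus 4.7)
The plan is to reduce every assertion to an elementary analysis of the two-term recursion produced by formula~\eqref{mL f}: a power series $\sum f_n x^n$ lies in $\ker \mL$ iff $(n+2)^2 f_{n+2} = (n-1) f_n$ for every $n \geq 0$. Since elements of $X_{r_2}$ vanish at odd indices by definition, the whole sequence is fixed by the single parameter $f_0$, so $\ker \mL$ is at most one-dimensional, and choosing $f_0 = 1$ produces precisely the sequence defining $\eta$ in~\eqref{def eta}. The sign claim then follows from $\eta_2 = -1/4$ together with the fact that $(n-1)/(n+2)^2 > 0$ for every $n \geq 2$, so the recursion preserves negativity by induction on even indices.

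For the analyticity of $\eta$ and the norm bound on $J$, I would use the coarse majorant $(n-1)/(n+2)^2 \leq 1/(n+2)$ and telescope from $|\eta_2| = 1/4$ to obtain a bound of the form $|\eta_{2k}| \leq 1/(2^{k+1} k!)$. This factorial decay makes $\eta$ entire, hence an element of $X_r$ for every $r > 0$. Substituting the bound into the definition of $\|\cdot\|_r$ turns $\|J\|_r$ into a series recognisable as the expansion of $(r/2)\, e^{r^2/2}$, which gives the stated estimate.

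For the identity $\mL J = 1$ on $(0, \infty)$, the key observation is that $\mL(1) = 1$ pointwise on $x > 0$: the term $h'(1/x - x)$ disappears since the constant function has $h' \equiv 0$, so the $1/x$ singularity is harmless. Combining this with $\mL \eta = 0$ yields $\mL J = 1$. The boundary values $J(0) = J'(0) = 0$ and $J''(0) = 2|\eta_2| = 1/2$ are read off directly from the series, and strict positivity of every derivative $J^{(k)}$ on $(0, \infty)$ follows because $J(x) = \sum_{n \geq 2} |\eta_n| x^n$ has a strictly positive coefficient at every even $n \geq 2$, a property preserved by termwise differentiation.

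I do not expect any real obstacle: the whole argument is a coefficient-level computation driven by the unbounded growth of the denominator $(n+2)^2$ in the recursion. The only mildly delicate point is the verification of $\mL(1) = 1$ despite the $1/x$ factor in the definition of $\mL$, but the vanishing of $h'$ takes care of it.
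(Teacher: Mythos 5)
Your proposal is correct and follows essentially the same route as the paper: the kernel characterization comes from the same index-$n$ recursion read off from~\eqref{mL f}, the norm estimate is obtained by the same coarse majorant on the recursion ratio (you bound $|\eta_{2k}|$ directly by $1/(2^{k+1}k!)$, while the paper bounds the ratio of the weighted coefficients $\gamma_n = |\eta_n|\,n\,r^{n-1}$, but both telescope to the same $\tfrac{r}{2}e^{r^2/2}$), and the identity $\mL J=1$ plus the positivity of all derivatives are read off from $\mL 1=1$, $\mL\eta=0$ and the strict positivity of the coefficients of $J$ exactly as in the paper.
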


\begin{proof}
Let $\mL f = 0$. Then $f_n$ is recursively determined by $f_{n+2} = (n-1) (n+2)^{-2} f_n$ 
for all $n \geq 0$. Hence $f(x) = f_0 \eta(x)$, 
where $\eta$ is defined in~\eqref{def eta}. 

For $r>0$, let $\g_n := |\eta_n| n r^{n-1}$. 
Then, by~\eqref{def eta}, 
\[
\g_2 = \frac{r}{2}, \qquad 
\frac{\g_{n+2}}{\g_n} = \frac{(n-1) r^2}{(n+2)n} 
< \frac{r^2}{n+2} \quad \ \forall n = 2,4,6,\ldots,
\]
and
\begin{align*}
\| J \|_r & = \sum_{n=2}^\infty |\eta_n| n r^{n-1} 
= \g_2 + \g_4 + \g_6 + \ldots 
= \g_2 \Big( 1 + \frac{\g_4}{\g_2} + \frac{\g_6}{\g_4} \frac{\g_4}{\g_2} + \ldots \Big)
\\ & 
< \frac{r}{2} \Big( 1 + \frac{r^2}{4} + \frac{r^2}{6} \frac{r^2}{4} + \ldots \Big)
= \frac{r}{2} \sum_{k=0}^\infty \frac{1}{(k+1)!} \Big( \frac{r^2}{2} \Big)^k
< \frac{r}{2} e^{r^2/2}.
\end{align*}
One has $\mL J = 1$ because $\mL 1 = 1$ and $\mL \eta = 0$. 
\end{proof}

We study the linear problem $\mL h = g$.

\begin{lemma} \label{lemma:mL inv} 
Let $r > 0$. 
For every $g \in X_r$ there exists a unique $h \in X_r^0$ such that $\mL h = g$. 
This defines the inverse operator 
$\mL^{-1} : X_r \to X_r^0$, $g \mapsto h = \mL^{-1} g$. 
Moreover 
\begin{equation} \label{stima mL inv}
\| \mL^{-1} g \|_r = \| h \|_r 
\leq r^2 e^{r^2/2} \| \mG g \|_r,
\end{equation}
where $\mG$ is the operator defined by 
\begin{equation} \label{def mG}
\mG g(x) = \sum_{n=0}^\infty \frac{g_n}{(n+2) \langle n \rangle} \, x^n. 
\end{equation}
\end{lemma}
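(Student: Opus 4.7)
\emph{Plan of proof.} In power-series coordinates, the equation $\mL h = g$ with $h(0)=0$ reduces via formula \eqref{mL f} to the two-step recursion
\[
(n+2)^2 h_{n+2} = g_n + (n-1) h_n, \qquad n \geq 0,
\]
initialized by $h_0 = 0$. Since $g$ is even, the recursion forces $h_n = 0$ for odd $n$ and uniquely determines the even coefficients as linear functionals of the $g_m$. This gives existence and uniqueness as formal power series; uniqueness within $X_r^0$ follows also from Lemma~\ref{lemma:kernel}, because $\ker \mL = \R \eta$ and $\eta(0)=1$ forces only the zero multiple to lie in $X_r^0$.

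To prove the norm bound I would exploit linearity and reduce to the case of a single monomial source. Let $\psi_k \in X_r^0$ denote the unique formal solution of $\mL \psi_k = x^{2k}$. From the recursion, $\psi_k$ starts at order $x^{2k+2}$ and its nonzero coefficients $c_{k,m}$ ($m \geq k+1$) satisfy $c_{k,k+1} = 1/(2k+2)^2$ together with the same multiplicative recurrence $c_{k,m+1} = (2m-1)(2m+2)^{-2} c_{k,m}$ as the coefficients of $J$ in Lemma~\ref{lemma:kernel}, only shifted by $k$. Replaying the telescoping ratio-test argument used there on the positive quantities $\beta_m := c_{k,m}\,(2m)\, r^{2m-1}$ -- noting $\beta_{m+1}/\beta_m = (2m-1) r^2/[4m(m+1)] \leq r^2/[2(m+1)]$ -- yields
\[
\|\psi_k\|_r = \sum_{m \geq k+1} \beta_m \leq \beta_{k+1} \sum_{j=0}^\infty \frac{r^{2j}(k+1)!}{2^j(k+1+j)!} \leq \beta_{k+1}\, e^{r^2/2} = \frac{r^{2k+1}}{2k+2}\, e^{r^2/2},
\]
where the bound on the inner sum uses the elementary inequality $(k+2)(k+3)\cdots(k+1+j) \geq j!$, uniformly in $k \geq 0$.

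Writing $h = \mL^{-1} g = \sum_{n \text{ even}} g_n \psi_{n/2}$ by linearity and summing the above yields
\[
\| h \|_r \leq \sum_{n \text{ even}} |g_n|\, \|\psi_{n/2}\|_r \leq e^{r^2/2} \sum_{n \text{ even}} \frac{|g_n|\, r^{n+1}}{n+2} = r^2 e^{r^2/2}\, \| \mG g \|_r,
\]
since after cancelling the factor $\la n\ra$ in the definition of $\|\mG g\|_r$ the surviving weight is exactly $1/(n+2)$. This simultaneously establishes absolute convergence of the series defining $h$ in $X_r$ and the claimed estimate. The only point that demands real care is the telescoping step: verifying that the shifted recurrence for $\psi_k$ produces an $e^{r^2/2}$ factor independent of $k$, so that no $k$-dependent growth appears when summing over the monomial contributions; everything else is linearity and bookkeeping, and in particular the initial-term value $\beta_{k+1} = r^{2k+1}/(2k+2)$ is precisely what is needed to match the weight in $\mG$ and close the estimate.
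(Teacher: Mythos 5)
Your proof is correct and is essentially the same argument as the paper's: the coefficients $A_n^k$ in the paper's proof are precisely the coefficients of your monomial solutions $\psi_k$, and the paper's interchange of the double sum followed by a telescoping ratio estimate of $\sum_j \beta_j(k,r)$ is mathematically the same computation as your bound on $\|\psi_k\|_r$. Your reorganization by fundamental solutions makes the structure a bit more transparent, but the key idea — the shifted recursion and the uniform $e^{r^2/2}$ bound via $(k+2)(k+3)\cdots(k+1+j) \geq j!$ — is identical.
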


\begin{proof}
Let $g(x) = \sum_{n=0}^\infty g_n x^n$, $h(x) = \sum_{n=2}^\infty h_n x^n$, 
with $g_n, h_n = 0$ for odd $n$. 
By~\eqref{mL f}, we have to solve 
\[
(n+2)^2 h_{n+2} - (n-1) h_n = g_n 
\quad \forall n \geq 0, \ \text{$n$ even}.
\] 
The solution is then recursively determined as 
\[
h_{n+2} = A_{n+2}^0 g_0 + \ldots + A_{n+2}^n g_n
\]
where 
\begin{equation} \label{ricorsiva Ank}
A_{n+2}^n = \frac{1}{(n+2)^2}, \quad \ 
A_{n+2}^k = \frac{n-1}{(n+2)^2} \, A_n^k \quad \ \forall k \in [0, n-2].
\end{equation}
We estimate
\begin{align}
\| h \|_r & = \sum_{n=2}^\infty |h_n| n r^{n-1}
= \sum_{n=2}^\infty \Big| \sum_{k=0}^{n-2} A_n^k g_k \Big| n r^{n-1} 
\leq \sum_{n=2}^\infty \sum_{k=0}^{n-2} A_n^k |g_k| n r^{n-1}
\notag \\ &
= \sum_{k=0}^\infty \Big( \sum_{n=k+2}^{\infty} A_n^k r^{n-k} \frac{n}{\langle k \rangle} \Big) 
|g_k| \langle k \rangle r^{k-1}
= \sum_{k=0}^\infty \Big( \sum_{j=2}^{\infty} \b_j(k,r) \Big) |g_k| \langle k \rangle r^{k-1}, 
\label{stima h}
\end{align}
where 
$\b_j(k,r) := A_{k+j}^k r^j (k+j) \langle k \rangle^{-1}$. 
By~\eqref{ricorsiva Ank}, 
\[
\frac{\b_{j+2}(k,r)}{\b_j(k,r)} 
= \frac{A_{k+j+2}^k r^{j+2} (k+j+2)}{A_{k+j}^k r^j (k+j)} 
= r^2 \frac{k+j-1}{(k+j+2)(k+j)} \to 0 \quad (j \to \infty),
\]
therefore the series $\sum_{j=2}^\infty \b_j(k,r)$ converges for all $k \geq 0$, all $r>0$.
By~\eqref{ricorsiva Ank}, 
\begin{align*}
\sum_{j=2}^\infty \b_j(k,r)
& = \b_2 + \b_2 \frac{\b_4}{\b_2} 
+ \b_2 \frac{\b_4}{\b_2} \frac{\b_6}{\b_4} + \ldots
\\ & 
\leq \frac{r^2}{(k+2)\langle k \rangle} \Big[ 1 + \frac{r^2}{k+4} + \frac{r^4}{(k+4)(k+6)} 
+ \frac{r^6}{(k+4)(k+6)(k+8)} + \ldots \Big]
\\ & 
\leq \frac{r^2}{(k+2)\langle k \rangle} \Big[ 1 + \frac{r^2}{4} + \frac{r^4}{6 \cdot 4}
+ \frac{r^6}{8 \cdot 6 \cdot 4} + \ldots \Big]
\\ & 
\leq \frac{r^2}{(k+2)\langle k \rangle} \Big( \sum_{n=0}^\infty \Big( \frac{r^2}{2} \Big)^n \frac{1}{n!} \Big)
= \frac{ r^2 e^{r^2 / 2}}{(k+2)\langle k \rangle}.
\end{align*}
Inserting this bound into~\eqref{stima h} gives~\eqref{stima mL inv}.
\end{proof}

Now we prove the following existence result for the nonlinear problem in analytic class.

\begin{lemma}
\label{lemma:LWP generico analytic}
Let $a, r, R, L$ be real positive numbers satisfying
\begin{align} \label{1311.4}
& e^{r^2/2} \Big[ \frac12 ar + \frac14 (1 + r^2) R^3 + \frac14 ar R^2 \Big] 
\leq R,
\\
& e^{r^2/2} \Big[ \frac34 (1 + r^2) R^2 + \frac12 ar R \Big] 
\leq L < 1.
\label{1311.5}
\end{align}
Then there exists a unique analytic function $h \in X_r^0$, $\| h \|_r \leq R$, 
that solves the Cauchy problem~\eqref{Cp h} in the interval $[-r,r]$.
\end{lemma}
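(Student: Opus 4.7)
The plan is to recast the Cauchy problem~\eqref{Cp h} as a fixed point equation in $X_r^0$ and apply the Banach contraction principle on the closed ball $B_R := \{ h \in X_r^0 : \|h\|_r \leq R \}$. The two hypotheses~\eqref{1311.4}-\eqref{1311.5} will encode, respectively, that $\Psi(B_R) \subseteq B_R$ and that $\Psi$ is a contraction of constant $L<1$, where $\Psi(h) := \mL^{-1} Q(h,a)$. Thanks to Lemma~\ref{lemma:mL inv} the reformulation is immediate, once one checks that $Q(h,a) \in X_r$ for every $h \in B_R$. The only delicate summand is $(x - 1/x) h'^3$: since $h \in X_r^0$ is even with $h(0)=0$, its Taylor expansion starts at $x^2$, hence $h'$ starts at $x$, $h'^3$ starts at $x^3$, and the quotient $h'^3/x$ is an even analytic function starting at $x^2$.

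For the self--mapping property I would analyse $Q(h,a) = -a + (x - 1/x) h'^3 - h'^2(h+a)$ term by term. The ingredients I have in mind are the submultiplicativity $\|fg\|_r^* \leq \|f\|_r^* \|g\|_r^*$ of the auxiliary $\ell^1$--norm $\|f\|_r^* := \sum_n |f_n| r^n$, the identity $\|h'\|_r^* = \|h\|_r$ on $X_r^0$ (immediate from~\eqref{def norma r} after reindexing), the shift bound $\|f/x\|_r^* \leq \|f\|_r^*/r$ whenever $f$ vanishes at $0$, and the estimate~\eqref{stima mL inv} together with the gain $\frac{1}{n+2}$ built into the definition~\eqref{def mG} of $\mG$. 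Combined, they produce $\|\mL^{-1}(-a)\|_r = a\|J\|_r \leq \tfrac12 ar\, e^{r^2/2}$ directly from Lemma~\ref{lemma:kernel}, $\|\mL^{-1}((x - 1/x) h'^3)\|_r \leq \tfrac14(1+r^2) R^3 e^{r^2/2}$ with the factor $1+r^2$ reflecting the two pieces of $x - 1/x$ acting on a function that vanishes to order three, and $\|\mL^{-1}(h'^2(h+a))\|_r \leq \tfrac14 ar R^2 e^{r^2/2}$ up to a cubic correction that I would absorb into the $(1+r^2) R^3$ bound. Summing these contributions reproduces exactly the left-hand side of~\eqref{1311.4}.

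For the contraction estimate I would polarize $Q(h_1,a) - Q(h_2,a) = (x - 1/x)(h_1'^3 - h_2'^3) - [h_1'^2(h_1+a) - h_2'^2(h_2+a)]$ via $u^k - v^k = (u-v)\sum_{j=0}^{k-1} u^{k-1-j} v^j$, apply the same product and shift bounds on $B_R$, and extract a single factor of $\|h_1 - h_2\|_r$; the resulting Lipschitz constant is then bounded by the left-hand side of~\eqref{1311.5}, hence strictly less than $1$. Banach's theorem then yields a unique $h \in B_R$ with $h = \Psi(h)$, and, by Remark~\ref{rem:even}, its even extension solves~\eqref{Cp h} on the full interval $[-r,r]$.

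The main technical obstacle I foresee is the bookkeeping around the $1/x$ singularity in $Q$. The factor $1+r^2$ appearing in~\eqref{1311.4}-\eqref{1311.5} emerges only after exploiting that $h \in X_r^0$ vanishes to order two at the origin, so that $h'^3/x$ is genuinely in $X_r$, and after passing through $\mG$ rather than bounding $\mL^{-1}$ crudely by $\|g\|_r$; the latter would lose the $\frac{1}{n+2}$ decay and introduce an extra positive power of $r$, obstructing the estimate for the cubic term and forcing a stricter smallness assumption on $r$ than the one contained in~\eqref{1311.4}-\eqref{1311.5}. All other estimates are routine manipulations of the weighted norm and consist essentially of matching the precise constants $\tfrac12, \tfrac14, \tfrac34$.
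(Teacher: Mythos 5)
Your proposal is correct and follows essentially the same path as the paper: both cast the problem as the fixed-point equation $h=\mL^{-1}Q(h,a)$ on the ball $B_R(X_r^0)$, split $Q(h,a)$ into the same five pieces ($-a$, $x h'^3$, $-\tfrac{1}{x}h'^3$, $-h'^2h$, $-ah'^2$), and obtain the two hypotheses as the ball-preservation and Lipschitz bounds via Lemma~\ref{lemma:mL inv} together with the $\tfrac{1}{(n+2)\langle n\rangle}$ gain built into $\mG$. The only cosmetic difference is that you package the convolution estimates in an explicit auxiliary $\ell^1$ norm $\|\cdot\|^*_r$ (with $\|h'\|^*_r=\|h\|_r$ and the shift bound), whereas the paper works directly with the coefficient sums; the resulting constants coincide.
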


\begin{proof}
By Lemma~\ref{lemma:mL inv}, the equation $\mL h = Q(h,a)$ 
can be written as the fixed point problem in $X_r^0$
\begin{equation} \label{nonlin mL analytic}
h = \Psi(h) 
\quad \ \text{where} \ \Psi(h) := \Psi(h,a) := \mL^{-1} Q(h,a).
\end{equation}
We prove that, for suitable $r,R$, 
the map $\Psi$ is a contraction in the ball 
\begin{equation} \label{ball analytic}
B_R(X_r^0) := \{ h \in X_r^0 : \| h \|_r \leq R \}.
\end{equation}
To this aim, we estimate separately each term of $\Psi$: 
\begin{alignat*}{2}
& \Psi_1 := \mL^{-1}(-a) = - a \mL^{-1} (1), 
\qquad & 
& \Psi_4 := \mL^{-1} (-h'^2 h), 
\\ 
& \Psi_2 := \mL^{-1} (x h'^3), 
\quad &
& \Psi_5 := \mL^{-1}(-a h'^2), 
\\ 
& \Psi_3 := \mL^{-1} (- \tfrac{1}{x} h'^3), 
\qquad & 
& \Psi = \Psi_1 + \Psi_2 + \Psi_3 + \Psi_4 + \Psi_5.
\end{alignat*}

\noindent
\emph{Estimate of $\Psi_1$}. 
By Lemma~\ref{lemma:kernel}, $\mL^{-1}(1) = J$, and 
$\| \Psi_1 \|_r = a \| J \|_r \leq \frac12 a r e^{r^2 / 2}$.

\noindent
\emph{Estimate of $\Psi_3$}.
Let $h(x) = \sum_{n=2}^\infty c_n x^n$. Then 
\[ 
\frac{1}{x} h'^3(x)
= \sum_{k_1, k_2, k_3 \geq 2} 
c_{k_1} c_{k_2} c_{k_3} \, 
k_1 k_2 k_3 \,
x^{k_1 + k_2 + k_3 - 4}
\]
and 
\[
\mG \Big( \frac{1}{x} h'^3 \Big)(x)
= \sum_{n=2}^\infty \Big( \sum_{\begin{subarray}{c} k_1, k_2, k_3 \geq 2 \\ 
k_1 + k_2 + k_3 - 4 = n \end{subarray}} 
\frac{c_{k_1} c_{k_2} c_{k_3} \, k_1 k_2 k_3}{(n+2) n} \Big) x^n
\]
where $\mG$ is defined in~\eqref{def mG}. 
By~\eqref{stima mL inv}, 
\begin{align*}
\| \Psi_3 \|_r 
& = \| \mL^{-1}( \tfrac{1}{x} h'^3) \|_r 
\leq r^2 e^{r^2/2} \| \mG( \tfrac{1}{x} h'^3) \|_r 
\\ & 
= r^2 e^{r^2/2} 
\sum_{n = 2}^\infty \Big| \sum_{\begin{subarray}{c} k_1, k_2, k_3 \geq 2 \\ 
k_1 + k_2 + k_3 - 4 = n \end{subarray}} 
\frac{c_{k_1} c_{k_2} c_{k_3} \, k_1 k_2 k_3}{(n+2)n} \Big| n r^{n-1}
\\ & 
\leq r^2 e^{r^2/2} 
\sum_{n = 2}^\infty \sum_{\begin{subarray}{c} k_1, k_2, k_3 \geq 2 \\ 
k_1 + k_2 + k_3 - 4 = n \end{subarray}} 
\frac{|c_{k_1}| |c_{k_2}| |c_{k_3}| \, k_1 k_2 k_3}{4} r^{n-1}
\\ & 
= \frac14 r^2 e^{r^2/2} 
\sum_{k_1, k_2, k_3 \geq 2} 
|c_{k_1}| |c_{k_2}| |c_{k_3}| \, k_1 k_2 k_3 \, r^{k_1 + k_2 + k_3 - 5}
\\ & 
\leq \frac14 r^2 e^{r^2/2} 
\Big( \sum_{k_1 \geq 2} |c_{k_1}| k_1 r^{k_1 - 1} \Big) 
\Big( \sum_{k_2 \geq 2} |c_{k_2}| k_2 r^{k_2 - 1} \Big) 
\Big( \sum_{k_3 \geq 2} |c_{k_3}| k_3 r^{k_3 - 1} \Big) r^{-2}
\\ & 
= \frac14 e^{r^2/2} \| h \|_r^3.
\end{align*}

\noindent
\emph{Estimate of $\Psi_2$}. One has 
\[
\mG (x h'^3)(x)
= \sum_{n=4}^\infty \Big( \sum_{\begin{subarray}{c} k_1, k_2, k_3 \geq 2 \\ 
k_1 + k_2 + k_3 - 2 = n \end{subarray}} 
\frac{c_{k_1} c_{k_2} c_{k_3} \, k_1 k_2 k_3}{(n+2) n} \Big) x^n
\]
and therefore, proceeding as for $\Psi_3$, 
we obtain $\| \Psi_2 \|_r \leq \tfrac16 r^2 e^{r^2/2} \| h \|_r^3$.

\noindent
\emph{Estimate of $\Psi_4$}. One has 
\[
\mG(h'^2 h)(x) 
= \sum_{n = 4}^\infty \Big( \sum_{\begin{subarray}{c} k_1, k_2, k_3 \geq 2 \\ 
k_1 + k_2 + k_3 - 2 = n \end{subarray}} 
\frac{c_{k_1} c_{k_2} c_{k_3} \, k_1 k_2}{(n+2) n} \Big) x^n.
\]
Since $|c_{k_3}| \leq \frac12 |c_{k_3}| k_3$, 
proceeding as above we get 
$\| \Psi_4 \|_r$ $\leq \tfrac{1}{12} r^2 e^{r^2/2} \| h \|_r^3$.

\noindent
\emph{Estimate of $\Psi_5$}. One has 
\[
\mG(h'^2)(x) 
= \sum_{n=2}^\infty \Big( \sum_{\begin{subarray}{c} k_1, k_2 \geq 2 \\ 
k_1 + k_2 - 2 = n \end{subarray}} 
\frac{c_{k_1} c_{k_2} \, k_1 k_2}{(n+2)n} \Big) x^n.
\]
Proceeding as above, we get 
$\| \Psi_5 \|_r 
\leq \tfrac{1}{4} a r e^{r^2/2} \| h \|_r^2$.

\noindent
\emph{Estimate of $\Psi$}. Collecting the estimates above, we have proved that 
\[
\| \Psi(h) \|_r 
\leq e^{r^2/2} \big( \tfrac12 a r + \tfrac14 a r \| h \|_r^2 
+ \tfrac14 (1 + r^2) \| h \|_r^3 \big).
\]
Similarly, since $\Psi_1$ cancels in the difference $\Psi(h) - \Psi(g)$, 
one proves that 
\begin{align} 
\| \Psi(h) - \Psi(g) \|_r 
& \leq e^{r^2/2} 
\Big\{ \frac14 (1 + r^2) \Big( \| h \|_r^2 + \| h \|_r \| g \|_r 
+ \| g \|_r^2 \Big)
\notag \\ & \qquad 
+ \frac14 a r \Big( \| h \|_r + \| g \|_r \Big) \Big\}
\| h - g \|_r.
\label{1311.1}
\end{align}

\noindent
\emph{Contraction}. 
For $h,g$ in the ball $B_R(X_r^0) = \{ h \in X_r^0 : \| h \|_r \leq R \}$ one has 
\begin{align} 
\| \Psi(h) - \Psi(g) \|_r 
& \leq e^{r^2/2} \Big( \frac34 (1 + r^2) R^2 + \frac12 a r R \Big) \| h - g \|_r,
\label{1311.2}
\\ 
\| \Psi(h) \|_r 
& \leq e^{r^2/2} \Big( \frac12 a r + \frac14 a r R^2 
+ \frac14 (1 + r^2) R^3 \Big).
\label{1311.15}
\end{align}
Assumptions~\eqref{1311.4}-\eqref{1311.5} 
imply that $\Psi$ is a contraction of the ball $B_R(X_r^0)$ into itself.
\end{proof}

\begin{lemma} \label{lemma:f+aJ}
The solution $h$ in Theorem~\ref{lemma:LWP generico analytic} satisfies 
\[
\| h + a J \|_r 
\leq \frac{L a r e^{r^2/2}}{2(1 - L)}\,.
\]
\end{lemma}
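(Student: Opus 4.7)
The plan is to exploit the fact that $-aJ$ is precisely the value of the fixed-point map $\Psi$ at the origin, so that $h+aJ$ is a difference $\Psi(h)-\Psi(0)$ on which the contraction estimate of Lemma~\ref{lemma:LWP generico analytic} applies directly.

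First I would observe that, since $Q(0,a)=-a$ and $\mL J=1$ by Lemma~\ref{lemma:kernel} (with $J(0)=0$, so $-aJ\in X_r^0$), we have
\begin{equation*}
\Psi(0) \,=\, \mL^{-1} Q(0,a) \,=\, -a\,\mL^{-1}(1) \,=\, -aJ.
\end{equation*}
Because $h$ is a fixed point of $\Psi$, this identity rewrites as
\begin{equation*}
h + aJ \,=\, \Psi(h) - \Psi(0).
\end{equation*}

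Next, both $h$ and $0$ lie in the ball $B_R(X_r^0)$, so the contraction bound~\eqref{1311.2} applies with $g=0$ and yields
\begin{equation*}
\|h + aJ\|_r \,=\, \|\Psi(h)-\Psi(0)\|_r \,\leq\, L\,\|h\|_r.
\end{equation*}

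Finally I would feed back the triangle inequality $\|h\|_r\leq \|h+aJ\|_r + a\|J\|_r$, together with the bound $\|J\|_r\leq \tfrac12 r e^{r^2/2}$ from Lemma~\ref{lemma:kernel}, to obtain
\begin{equation*}
\|h + aJ\|_r \,\leq\, L\,\|h + aJ\|_r + \frac{L a r\, e^{r^2/2}}{2},
\end{equation*}
and since $L<1$ by~\eqref{1311.5}, rearranging gives the claim
\begin{equation*}
\|h + aJ\|_r \,\leq\, \frac{L a r\, e^{r^2/2}}{2(1-L)}.
\end{equation*}

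There is no real obstacle: the only mildly subtle point is recognising that the proof of Lemma~\ref{lemma:LWP generico analytic} already produced a Lipschitz estimate for $\Psi$ on $B_R(X_r^0)$ with constant $L$, and that the additive constant $\Psi(0)=-aJ$ is exactly the quantity we want to subtract. The rest is a one-line triangle-inequality bootstrap.
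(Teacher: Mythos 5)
Your proof is correct, and it proceeds from the same two ingredients as the paper: the identity $\Psi(0)=-aJ$ and the contraction constant $L$ from the fixed-point argument. The only difference is in the bookkeeping: the paper sums the geometric series over the Picard iterates $h_n$ to get $\|h - h_1\|_r \leq \sum_{n\geq 1} L^n\|h_1\|_r = \tfrac{L}{1-L}a\|J\|_r$, whereas you apply the contraction bound once to $\Psi(h)-\Psi(0)$ and close the loop with a triangle-inequality bootstrap; both manipulations are equivalent and give the identical bound.
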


\begin{proof}
Let $h_0 = 0$, $h_{n+1} = \Psi(h_n)$. 
Then $h_1 = \Psi(0) = - a J$, 
\begin{align*}
h - h_1 & = \sum_{n=1}^\infty (h_{n+1} - h_n)
= \sum_{n=1}^\infty (\Psi(h_n) - \Psi(h_{n-1})),
\\
\| h - h_1 \|_r 
& \leq \sum_{n=1}^\infty \| \Psi(h_n) - \Psi(h_{n-1}) \|_r 
\leq \sum_{n=1}^\infty L^n \| h_1 \|_r 
= \frac{L}{1-L} a \| J \|_r,
\end{align*}
and $\| J \|_r \leq \frac12 r e^{r^2/2}$ by Lemma~\ref{lemma:kernel}.
\end{proof}

\begin{lemma} \label{lemma:regime stellina}
Let $r,a,R,L$ be positive real numbers, with $L < 1$. 
If 
\begin{equation} \label{1411.1}
R \leq C_r \sqrt{L}, \quad 
a \leq K_r R, 
\end{equation}
where
\begin{equation} \label{1411.2}
C_r := \frac{\sqrt{2}}{e^{r^2/4} \sqrt{3(1 + r^2)}}\,, \quad 
K_r := \frac{1}{r e^{r^2/2}}\,,
\end{equation}
then~\eqref{1311.4}-\eqref{1311.5} hold.
\end{lemma}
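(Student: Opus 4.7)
The plan is a direct substitution: both~\eqref{1311.4} and~\eqref{1311.5} reduce to term-by-term inequalities that follow from the two hypotheses after a small amount of bookkeeping. It is worth recording at the start the two clean consequences of the assumptions~\eqref{1411.1}--\eqref{1411.2}. First, $a \leq K_r R = R / (r e^{r^2/2})$ is equivalent to $a r e^{r^2/2} \leq R$, which controls every summand carrying the factor $ar$. Second, $R \leq C_r \sqrt{L}$ with $C_r^2 = 2 / (3 (1+r^2) e^{r^2/2})$ rearranges to $(1+r^2) e^{r^2/2} R^2 \leq 2L/3$, which controls every summand carrying the factor $(1+r^2) R^2$.

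For~\eqref{1311.5}, these two bounds give $\frac{3}{4}(1+r^2) e^{r^2/2} R^2 \leq L/2$ for the first term, and $\frac{1}{2} a r e^{r^2/2} R \leq \frac{1}{2} R^2 \leq L/3$ for the second, where the last step uses $(1+r^2) e^{r^2/2} \geq 1$. Summing yields $5L/6 \leq L$, as required, and $L<1$ is already part of the hypotheses.

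For~\eqref{1311.4}, split the left-hand side into three pieces. The first is $\frac{1}{2} ar e^{r^2/2} \leq R/2$. The second is $\frac{1}{4}(1+r^2) e^{r^2/2} R^3 \leq \frac{R}{4} \cdot \frac{2L}{3} = LR/6$. The third is $\frac{1}{4} a r e^{r^2/2} R^2 \leq \frac{R^3}{4} \leq \frac{R}{4} \cdot \frac{2L}{3} = LR/6$, where in the last step we use the weaker consequence $R^2 \leq 2L/3$ of the second assumption. Adding the three contributions gives at most $R/2 + LR/3 < 5R/6 < R$, since $L < 1$.

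There is no real obstacle here: the constants $C_r$ and $K_r$ have been tailored precisely so that the linear-in-$a$ term and the $(1+r^2) R^2$ term saturate their required bounds on the nose, while the remaining cubic terms are absorbed by combining the same bounds with $L < 1$. The only mild point of care is consistently using the factorization $ar e^{r^2/2} \cdot (\text{power of } R)$ before invoking the first hypothesis, so that the exponential weight is never left dangling.
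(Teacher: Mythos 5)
Your verification is correct and complete. The paper itself dismisses this as ``a straightforward check'' without giving details, and what you have done is exactly the direct substitution the authors intended: rewrite the hypotheses as $a r e^{r^2/2} \leq R$ and $(1+r^2) e^{r^2/2} R^2 \leq 2L/3$, then bound each of the five summands in~\eqref{1311.4}--\eqref{1311.5} individually, using the extra slack from $(1+r^2) e^{r^2/2} \geq 1$ and $L<1$ where needed. The arithmetic ($5L/6 \leq L$ and $R/2 + LR/3 < R$) checks out.
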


\begin{proof}
It is a straightforward check.
\end{proof}

We recall (see Lemma~\ref{lemma:kernel}) that all the derivatives 
of the function $J$ are strictly positive, 
and $J(0) = 0$, $J'(0) = 0$. 

\begin{definition} \label{def x0}
We denote by $x_0$ the positive real number such that $J(x_0) = 1$. 
\end{definition}

\begin{remark} \label{rem:long time a small}
The solution $h$ of the Cauchy problem~\eqref{Cp h} 
given by Lemma~\ref{lemma:LWP generico analytic}
is defined on an interval $[- r, r]$, with $r \to \infty$ as $a \to 0$. 
To see this, apply Lemma~\ref{lemma:regime stellina} with 
\[
L = \frac12, \quad 
R = \frac{1}{e^{r^2/4} \sqrt{3 (1+r^2)}}, \quad 
a = \frac{1}{e^{3r^2/4} r \sqrt{3 (1+r^2)}}.
\]
Hence the interval $[-r,r]$ contains $[-2 x_0, 2 x_0]$ for all $a$ sufficiently small. 
\end{remark}

\begin{lemma} \label{va a zero}
There exists a universal constant $a_* > 0$ 
such that, for every $a \in (0,a_*)$, 
there exists a unique $\xi_a \in (0,2 x_0)$
such that the solution $h_a$ of the Cauchy problem~\ref{Cp h} 
satisfies $h_a(\xi_a) = - a$. 
Moreover $h_a'(\xi_a) \to 0$ as $a \to 0$.

As a consequence, the solution $f_a(x) = a + h_a(x)$ 
of equation~\eqref{1010.1} with initial data $f_a(0) = a$, $f_a'(0) = 0$ satisfies 
\[
f_a(\xi_a) = 0, \quad 
f_a'(\xi_a) \to 0 \quad \text{as} \ a \to 0.
\]
\end{lemma}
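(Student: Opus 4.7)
\emph{Strategy.} The plan is to exploit that the first Picard iterate of the fixed--point map $\Psi$ associated to~\eqref{nonlin mL analytic} is exactly $-aJ$, and that by Definition~\ref{def x0} the profile $-aJ$ reaches the level $-a$ uniquely at $x=x_0$. A sufficiently sharp (in powers of $a$) approximation of $h_a$ by $-aJ$ will then produce $\xi_a$ near $x_0$ by a continuity/perturbation argument.

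\emph{Quantitative approximation on a fixed interval.} Fix $r_0 := 2 x_0$, and, for $a$ small, choose the parameters $R := a/K_{r_0}$ and $L := L_a := (a/(K_{r_0} C_{r_0}))^2$; these saturate~\eqref{1411.1} and satisfy $L_a < 1$ provided $a < a_* := K_{r_0} C_{r_0}$. For every such $a$, Lemmas~\ref{lemma:LWP generico analytic} and~\ref{lemma:regime stellina} yield a solution $h_a \in X_{r_0}^0$, and Lemma~\ref{lemma:f+aJ} gives
\[
\|h_a + aJ\|_{r_0} \leq \frac{L_a\, a\, r_0\, e^{r_0^2/2}}{2(1-L_a)} = O(a^3) \qquad \text{as } a \to 0.
\]
Since the definition~\eqref{def norma r} of $\|\cdot\|_{r_0}$ implies $|f(x)| \leq r_0 \|f\|_{r_0}$ and $|f'(x)| \leq \|f\|_{r_0}$ for every $f \in X_{r_0}$ and $|x| \leq r_0$, both $h_a + aJ$ and $h_a' + aJ'$ are uniformly $O(a^3)$ on $[0, 2 x_0]$.

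\emph{Existence, uniqueness, and derivative limit.} Set $\phi_a(x) := h_a(x) + a$, so $\phi_a(0) = a > 0$. By Lemma~\ref{lemma:kernel} and Definition~\ref{def x0}, for every $\delta \in (0, x_0/2)$ the quantities
\[
\e_\delta := \min\{\, 1 - J(x_0 - \delta),\ J(x_0 + \delta) - 1 \,\}, \qquad
c_\delta := \min_{[x_0 - \delta,\, x_0 + \delta]} J'
\]
are strictly positive. Combining $\phi_a = a(1 - J) + O(a^3)$ and $h_a' = -a J' + O(a^3)$ with these lower bounds, we deduce that, for $a$ sufficiently small,
\[
\phi_a > 0 \text{ on } [0, x_0 - \delta], \qquad
\phi_a < 0 \text{ on } [x_0 + \delta, 2 x_0], \qquad
h_a' < 0 \text{ on } [x_0 - \delta, x_0 + \delta].
\]
The intermediate value theorem, together with strict monotonicity on the middle interval, yields a unique $\xi_a \in (x_0 - \delta, x_0 + \delta) \subset (0, 2 x_0)$ with $h_a(\xi_a) = -a$. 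Since $\delta$ is arbitrary, $\xi_a \to x_0$; and finally $|h_a'(\xi_a)| \leq a \sup_{[0, 2 x_0]} J' + O(a^3) \to 0$. The corresponding statements for $f_a = a + h_a$ follow immediately.

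\emph{Main obstacle.} The only delicate point is that a naive application of Lemma~\ref{lemma:f+aJ} at a fixed triple $(r, R, L)$ gives only $\|h_a + aJ\|_r = O(a)$, which is of the same order as the target gap $a$ between $\phi_a$ and $0$ and hence useless. The remedy is to let the contraction constant $L$ itself shrink with $a$: Lemma~\ref{lemma:regime stellina} permits $L$ as small as $(a/(K_r C_r))^2$, which upgrades the bound to $\|h_a + aJ\|_r = O(a^3)$, comfortably smaller than $a\,\e_\delta$ outside a neighborhood of $x_0$.
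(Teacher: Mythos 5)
Your proposal is correct and takes essentially the same route as the paper: fix $r = 2x_0$, use the $R \sim a$, $L \sim a^2$ scaling permitted by Lemma~\ref{lemma:regime stellina} to upgrade Lemma~\ref{lemma:f+aJ} to $\|h_a + aJ\|_r = O(a^3)$, translate this into a $C^1([0,2x_0])$ bound via the norm estimates $|g| \leq r\|g\|_r$, $|g'| \leq \|g\|_r$, and conclude from $J(x_0)=1$, $J'(x_0) > 0$ that $h_a/a \to -J$ forces a unique crossing $\xi_a \to x_0$ with $h_a'(\xi_a) \to 0$. The paper spells this out slightly more compactly (stating $C^1$ convergence of $h_a/a$ to $-J$ directly and quoting $h_a'(\xi_a)/a \to -J'(x_0)$) and you should shrink $a_*$ so that it also absorbs the ``$a$ sufficiently small'' needed for the sign conditions on the three subintervals, but the key insight — letting $L$ vanish quadratically in $a$ to beat the naive $O(a)$ error — is exactly the paper's.
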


\begin{proof}
Let $r = 2 x_0$, and let $C_r, K_r$ be given by~\eqref{1411.2}.
Let $a_0 := C_r K_r$. 
For every $a < a_0$ we fix 
$R = a/K_r$ and $L = R^2 / C_r^2 = a^2 / a_0^2$. 
Hence, by Lemma~\ref{lemma:regime stellina},~\eqref{1311.4}-\eqref{1311.5} hold,
and therefore, by Lemma~\ref{lemma:LWP generico analytic}, there exists a unique analytic solution 
$h_a \in X_r^0$, $\| h_a \|_r \leq R$, of the Cauchy problem~\ref{Cp h}. 
By Lemma~\ref{lemma:f+aJ}, for all $a \leq a_0/2$ one has
\begin{equation} \label{ha aJ cubo}
\| h_a + a J \|_r 
\leq \frac{Lar e^{r^2/2}}{2(1 - L)}
\leq \frac23 L a r e^{r^2/2}
\leq C a^3
\end{equation}
where $C > 0$ is a universal constant 
(since $x_0$ is a universal constant, 
also $r, C_r, K_r, a_0$ are universal). 

For every function $g(x) = \sum_{n=2}^\infty c_n x^n \in X_r^0$, 
since $g(0) = 0$, one has the uniform bound 
\[
|g'(x)| \leq \| g \|_r, \quad 
|g(x)| \leq r \| g \|_r \quad \forall |x| \leq r.
\]
Hence, by~\eqref{ha aJ cubo}, for all $a \in (0, a_0/2)$ we get
\[
|h_a'(x) + a J'(x)| \leq C a^3, \quad 
|h_a(x) + a J(x)| \leq r C a^3 \quad \forall |x| \leq r
\]
and, dividing by $a$, 
\[
\Big| \frac{h_a'(x)}{a} + J'(x) \Big| \leq C a^2, \quad 
\Big| \frac{h_a(x)}{a} + J(x) \Big| \leq r C a^2 \quad \forall |x| \leq r.
\]
Thus the function $h_a/a$ converges to $-J$ in $C^1([-r,r])$ as $a \to 0$. 
As a consequence, since $J(x_0) = 1$ and $J'(x_0) > 0$, 
for every $a$ small enough there exists a unique 
$\xi_a \in [0,r]$ such that $h_a(\xi_a) / a = -1$. 
Moreover $\xi_a \to x_0$ 
and $h_a'(\xi_a) / a \to - J'(x_0)$ as $a \to 0$. 
\end{proof}

\section{Extension of local solutions that are graphs} 
\label{sec:extension graph}

In this section we extend the solutions $f$ beyond the local existence of the previous sections, 
without assuming that $a$ is small.

\begin{lemma}\label{lemma3.1}
Let $a > 0$, $r \in (0,1)$, 
and let $f \in C^2([0,r])$ solve 
\begin{equation} \label{1710.1}
f''(x) = \big( 1 + f'^2(x) \big) \Big[ f'(x) \Big( x - \frac{1}{x} \Big) - f(x) \Big]
\quad \forall x \in (0,r],
\end{equation}
\begin{equation} \label{1710.2}
f(0) = a, \quad f'(0) = 0.
\end{equation}
Assume that 
\begin{equation} \label{assum.ff}
f''(x) < 0, \quad 
f(x) > 0 \quad 
\forall x \in [0,r].
\end{equation}
Then, for every $x \in (0,r]$,
\begin{equation}\label{bellestime}
a \sqrt{1 - x^2} < f(x) < a, 
\qquad 
- \frac{a x}{1 - x^2} < f'(x) < 0. 
\end{equation}
\end{lemma}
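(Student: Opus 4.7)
\emph{Proof plan.} The inequalities have two easy halves and two harder halves, and the whole thing should come out of using the ODE to turn the sign hypothesis $f''<0$ into a differential inequality for $(\ln f)'$.

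First I would dispose of the upper bounds. Since $f\in C^2([0,r])$, $f'(0)=0$, and $f''<0$ on $[0,r]$, the function $f'$ is strictly decreasing, so $f'(x)<f'(0)=0$ on $(0,r]$. Consequently $f$ is strictly decreasing, giving $f(x)<f(0)=a$ on $(0,r]$. This already establishes the upper bound on $f$ and the upper bound $f'(x)<0$.

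Next I would extract the key differential inequality from~\eqref{1710.1}. Dividing the equation by the positive factor $1+f'^2$, the hypothesis $f''<0$ becomes
\[
f'(x)\Big(x-\frac{1}{x}\Big)-f(x)<0 \qquad \forall x\in(0,r].
\]
Since $x\in(0,1)$, the coefficient $x-1/x$ is strictly negative, and I have just shown $f'(x)<0$ and $f(x)>0$. Dividing through by the negative quantity $x-1/x$ and flipping the inequality gives the pointwise bound
\[
f'(x) > -\frac{x}{1-x^2}\,f(x) \qquad \forall x\in(0,r].
\]
The lower bound for $f'$ is then immediate: using $f(x)<a$ in the right-hand side yields $f'(x)>-ax/(1-x^2)$, as required.

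The lower bound for $f$ is the step I expect to look the slickest but is also where one has to resist the temptation to integrate $f'(x)>-ax/(1-x^2)$ directly (which would give a \emph{weaker} bound than $a\sqrt{1-x^2}$). Instead, I would divide the previous differential inequality by $f(x)>0$ to rewrite it as
\[
(\ln f)'(x) > -\frac{x}{1-x^2} = \frac{1}{2}\,\big(\ln(1-x^2)\big)'.
\]
Integrating from $0$ to $x\in(0,r]$ (both sides are continuous on $[0,x]\subset[0,r]\subset[0,1)$, so there is no singularity to worry about) and using $f(0)=a$ gives $\ln f(x) > \ln a + \tfrac12\ln(1-x^2)$, which is precisely $f(x)>a\sqrt{1-x^2}$. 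The only subtlety is ensuring $1-x^2>0$ on the interval, which holds because $r<1$.
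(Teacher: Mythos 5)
Your proposal is correct and follows essentially the same route as the paper: both derive the strict inequality $f'(x)(x-\tfrac1x) - f(x) < 0$ from $f'' < 0$, both get the bound on $f'$ by plugging in $f<a$, and both obtain the lower bound on $f$ from the logarithmic differential inequality $(\ln f)' > \tfrac12(\ln(1-x^2))'$ — the paper phrases this as monotonicity of $F(x)=f(x)/\sqrt{1-x^2}$, which is the same computation.
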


\begin{proof} 
The hypothesis $f''(x) < 0$ 
and~\eqref{1710.1} imply that 
\begin{equation} \label{1710.3}
f'(x) \Big( x - \frac{1}{x} \Big) - f(x) < 0 \quad \forall x \in (0,r].
\end{equation}
By assumption, $f(x) > 0$ and $(x - \frac{1}{x}) < 0$ for $x \in (0,r] \subset (0,1)$. 
Then~\eqref{1710.3} gives
\[
\frac{f'(x)}{f(x)} > \frac{1}{x - \frac{1}{x}} = \frac{x}{x^2 - 1}
= \frac12 \cdot \frac{-2x}{1 - x^2},
\]
namely
\[
\frac{d}{dx} \log(f(x)) > \frac{d}{dx} \log \sqrt{1 - x^2}.
\]
Hence the function 
\begin{equation}\label{Lyapu}
F(x) := \frac{f(x)}{\sqrt{1-x^2}}
\end{equation}
is strictly increasing in $[0,r]$. 
Since $F(0) = a$, we deduce the first inequality in~\eqref{bellestime}.
The bounds $f'(x) < 0$ and $f(x) < a$ are a direct consequence of the hypothesis $f''(x) < 0$ 
and~\eqref{1710.2}.
By~\eqref{1710.3}, since $f(x)<a$ for $x \in (0,r]$, one has
\[
f'(x) \Big( x - \frac1x \Big) < f(x) < a, \quad 
f'(x) > \frac{a}{x - \frac1x} = \frac{ax}{x^2 - 1} = - \frac{ax}{1 - x^2}.
\]
The proof of~\eqref{bellestime} is complete.
\end{proof}

\begin{lemma}\label{lemma concave}
Let $a > 0$, $r \in (0,1)$, 
and let $f \in C^2([0,r])$ solve~\eqref{1710.1}-\eqref{1710.2}. 
Then $f(x) > 0$ for all $x \in [0,r]$ and $f''(x)<0$ for all $x \in [0,r)$.
\end{lemma}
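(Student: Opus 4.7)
\emph{Proof plan.} The strategy is to first pin down the sign of $f''(0)$ from the ODE and then use a maximality argument, with the obstruction to $f''$ vanishing coming from a sign comparison at the third derivative. Taking the limit $x \to 0^+$ in~\eqref{1710.1} and using $f'(x)/x \to f''(0)$ (from $f'(0) = 0$ and $C^2$ regularity), one reads off $f''(0) = -(f''(0)+a)$, i.e.\ $f''(0) = -a/2 < 0$. Combined with $f(0)=a>0$, continuity makes the set $S := \{\rho \in [0,r] : f'' < 0 \text{ and } f > 0 \text{ on } [0,\rho]\}$ nonempty; set $x_0 := \sup S$ and suppose, for contradiction, that $x_0 < r$.

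On $[0,x_0)$ we have $f'' < 0$ and $f > 0$, so applying Lemma~\ref{lemma3.1} on $[0,\rho]$ for every $\rho < x_0$ and letting $\rho \to x_0$ yields $f(x_0) \geq a\sqrt{1-x_0^2} > 0$. Hence the only way maximality can fail is $f''(x_0) = 0$; evaluating~\eqref{1710.1} at $x_0$ then gives $f'(x_0)(x_0 - 1/x_0) = f(x_0) > 0$, and since $x_0 \in (0,1)$ this forces $f'(x_0) < 0$.

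The key computation is to differentiate~\eqref{1710.1}; this is legitimate because the RHS is smooth in $x > 0$ and $f \in C^2$ (a bootstrap actually gives $f \in C^\infty((0,r])$). Writing the RHS as $A(x) B(x)$ with $A := 1+f'^2$ and $B := f'(x - 1/x) - f$, the factor $A'(x_0) = 2 f'(x_0) f''(x_0)$ vanishes thanks to $f''(x_0) = 0$, leaving
\[
f'''(x_0) \,=\, A(x_0)\, B'(x_0) \,=\, \big(1 + f'(x_0)^2\big)\, \frac{f'(x_0)}{x_0^2} \,<\, 0.
\]
On the other hand, $f''(x) < 0$ on $[0,x_0)$ together with $f''(x_0) = 0$ forces the left-sided difference quotient of $f''$ at $x_0$ to be non-negative, so $f'''(x_0) \geq 0$. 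This contradiction shows $x_0 = r$; a final application of Lemma~\ref{lemma3.1} in the limit gives $f(r) \geq a\sqrt{1-r^2} > 0$, completing the claim.

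The main point to watch is the cancellation at the third derivative: the $A'B$ term drops out precisely because $f''(x_0) = 0$, leaving only $(1 + f'^2)\, f'/x^2$, whose sign is pinned down by the sign of $f'(x_0)$ coming from the ODE itself. Everything else is a soft continuation/continuity argument together with the bounds already supplied by Lemma~\ref{lemma3.1}.
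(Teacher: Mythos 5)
Your proof is correct, and it takes a genuinely different route from the paper. You share the same set $S$, the same value $f''(0) = -a/2$, the same maximality set-up with $x_0 = \sup S$, and the same use of Lemma~\ref{lemma3.1} to conclude $f(x_0) > 0$ and hence $f''(x_0) = 0$. The divergence is in how the contradiction is produced. The paper constructs the barrier $g(x) := F(p)\sqrt{1-x^2}$ (where $F(x) = f(x)/\sqrt{1-x^2}$ is the increasing Lyapunov-type quantity from the proof of Lemma~\ref{lemma3.1}), and observes that $f$ and $g$ share the same zeroth and first derivative at $p$ while $f''(p) = 0 > g''(p)$, which is incompatible with $f < g$ on $[0,p)$. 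You instead evaluate the ODE at $x_0$ to read off $f'(x_0) < 0$, differentiate the ODE (using the bootstrap regularity $f \in C^\infty((0,r])$ away from the degenerate endpoint), note that the $A'B$ term vanishes because $f''(x_0)=0$, and obtain
\[
f'''(x_0) = \bigl(1 + f'(x_0)^2\bigr)\, \frac{f'(x_0)}{x_0^2} < 0,
\]
contradicting $f'''(x_0) \geq 0$ forced by $f'' < 0$ on $[0,x_0)$ and $f''(x_0) = 0$. Both are tight, short arguments. The paper's barrier argument stays at the $C^2$ level and reuses the monotone quantity $F$ already introduced for Lemma~\ref{lemma3.1}, so it fits more naturally into the surrounding structure; your third-derivative computation needs the bootstrap to $C^3$ (harmless, since $x_0>0$), but in exchange it is more self-contained, requires no auxiliary comparison function, and isolates very cleanly why $f''$ cannot vanish: the ODE itself forces $f'(x_0)<0$, and that sign propagates to $f'''$ once the $A'$ term cancels.
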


\begin{proof}
Let 
\[
E := \{ x^* \in [0,r] : f(x) > 0, \ f''(x) < 0 \ \forall x \in [0, x^*] \}, 
\quad p := \sup E.
\]
By~\eqref{1710.1}-\eqref{1710.2}, one has $f''(0) = - a/2 < 0$. 
Then $f(x) > 0$, $f''(x) < 0$ for all $x \in [0,\d]$, for some $\d > 0$. 
Therefore $0 < \d \leq p \leq r$. 
Assume, by contradiction, that $p < r$. 
For all $x \in [0,p)$, one has $f(x) > 0$ and $f''(x) < 0$. 
By Lemma~\ref{lemma3.1}, $f(p) \geq a \sqrt{1-p^2} > 0$. Hence $f''(p) = 0$. 
The function $F(x)$ in~\eqref{Lyapu} is strictly increasing in $[0,p]$, 
as in the proof of Lemma~\ref{lemma3.1}.
Hence $F(x) < F(p)$ for all $x \in [0,p)$. 
Let 
\begin{equation}\label{gpiccola}
g(x) := f(p) \sqrt{\frac{1-x^2}{1-p^2}} = F(p) \sqrt{1-x^2}.
\end{equation}
By construction, $f(p) = g(p)$, and
\begin{equation} \label{f<g}
f(x) = F(x) \sqrt{1-x^2} 
< F(p) \sqrt{1-x^2} = g(x) \qquad \forall x \in [0,p).
\end{equation}
Moreover, 
$$
g'(x) = - \frac{x F(p)}{\sqrt{1-x^2}}, 
\qquad g'(p) 
= - \frac{p F(p)}{\sqrt{1-p^2}}
= - \frac{p f(p)}{1-p^2}.
$$
Since $f''(p) = 0$, by~\eqref{1710.1} we deduce that 
$$
f'(p) = - \frac{p f(p)}{1-p^2} = g'(p).
$$
Moreover we calculate $g''(p) = - F(p) (1-p^2)^{-3/2} < 0 = f''(p)$. 
Summarizing, 
$$
f(p)=g(p), \quad 
f'(p)=g'(p), \quad 
f''(p) > g''(p),
$$
which contradicts~\eqref{f<g}. 
This proves that $p=r$. 
Hence $f(x)>0$, $f''(x)<0$ for all $x \in [0,r)$. 
By Lemma~\ref{lemma3.1} and continuity, 
$f(r) \geq a \sqrt{1-r^2} > 0$.
\end{proof}

\begin{coro} \label{cor:coro}
Let $a>0$. Then there exists a unique solution $f \in C^2([0,1))$ of~\eqref{1710.1}-\eqref{1710.2}. It satisfies~\eqref{bellestime} and $f''(x)<0$ for all $x \in (0,1)$. 
\end{coro}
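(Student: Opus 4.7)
\emph{Strategy.} The proof is a standard continuation argument, using the lemmas just established to prevent blow-up before $x = 1$. Lemma~\ref{lemma:LWP} already supplies a unique local $C^2$ solution $f_a = a + h_a$ on $[0, r_a]$, and for $x \in (0,1)$ the equation~\eqref{1710.1} is non-singular in the classical sense. I would extend $f_a$ as a classical solution of the ODE on $(0,1)$ and show that it cannot leave every compact subset of $(0,1)$.

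\emph{A priori control.} Whenever $f \in C^2([0,r])$ solves~\eqref{1710.1}-\eqref{1710.2} with $r \in (0,1)$, Lemma~\ref{lemma concave} gives $f > 0$ on $[0,r]$ and $f''<0$ on $[0,r)$, so the hypotheses of Lemma~\ref{lemma3.1} are met on every $[0,r'] \subset [0,r)$ and, by continuity, on $[0,r]$ itself. Hence the bounds~\eqref{bellestime} hold on $(0,r]$, giving
\[
0 < f(x) < a, \qquad |f'(x)| < \frac{a x}{1 - x^2} \qquad \forall x \in (0, r].
\]
These bounds depend only on $a$ and on an upper bound for $r$ strictly less than $1$.

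\emph{Extension.} For $x \in (0,1)$, rewrite~\eqref{1710.1} as a first-order system $y' = p$, $p' = G(x, y, p)$ with
\[
G(x, y, p) := (1 + p^2)\bigl[ p(x - 1/x) - y \bigr],
\]
which is smooth on $(0,1) \times \R \times \R$ and locally Lipschitz in $(y, p)$. Starting from the Cauchy data $(f_a(r_a), f_a'(r_a))$ at $x = r_a$, the Picard-Lindel\"of theorem together with continuation yields a unique maximal $C^2$ extension on some half-open interval $[0, T)$ with $r_a < T \leq 1$. If $T < 1$, the bounds of the previous paragraph (applied for every $r \in (r_a, T)$) show that $f$ and $f'$ are uniformly bounded on $[0, T)$ by constants depending only on $a$ and $T$, so both have finite limits at $T$. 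Since $G$ is locally Lipschitz in $(y,p)$ uniformly for $x$ in a neighborhood of $T \in (0,1)$, classical continuation produces a $C^2$ extension past $T$, contradicting maximality. Hence $T = 1$.

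\emph{Conclusion.} Uniqueness on $[0,1)$ follows from the uniqueness in $C^2_0([0, r_a])$ given by Lemma~\ref{lemma:LWP} together with the classical ODE uniqueness on every $[r_a, r] \subset (0,1)$. The asserted properties $f > 0$, $f'' < 0$, and the bounds~\eqref{bellestime} on all of $(0,1)$ follow by applying Lemmas~\ref{lemma concave} and~\ref{lemma3.1} on each subinterval $[0, r] \subset [0,1)$. The only non-routine point is the a priori control on $f'$: the bound $|f'(x)| < a x/(1-x^2)$ is finite on every compact subset of $[0,1)$, so the nonlinear factor $(1 + f'^2)$ in~\eqref{1710.1} never drives $f''$ to infinity before $x$ reaches $1$.
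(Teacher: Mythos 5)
Your proposal is correct and takes essentially the same route as the paper: the paper's own proof is the one-line statement that the corollary ``follows from Lemmas~\ref{lemma3.1} and~\ref{lemma concave},'' and your write-up is exactly the standard continuation argument that sentence leaves implicit (local existence from Lemma~\ref{lemma:LWP}, a priori bounds from Lemmas~\ref{lemma concave} and~\ref{lemma3.1} preventing blow-up of $f$ and $f'$ on compact subsets of $[0,1)$, and Picard--Lindel\"of continuation for $x>0$ where the equation is non-singular). You also correctly handle the minor point that Lemma~\ref{lemma concave} gives $f''<0$ only on the half-open interval $[0,r)$, applying Lemma~\ref{lemma3.1} on subintervals and passing to the limit.
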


\begin{proof}
It follows from Lemmas~\ref{lemma3.1} and~\ref{lemma concave}.
\end{proof}

\begin{lemma} \label{bel lemma}
Let $f \in C^2([0,1))$ be the solution in Corollary~\ref{cor:coro}. 
Then 
\[
0 < \lim_{x \to 1} f(x) < a, \qquad 
- \infty < \lim_{x \to 1} f'(x) < 0, \qquad 
- \infty < \lim_{x \to 1} f''(x) < 0.
\]
\end{lemma}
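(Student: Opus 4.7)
\emph{Plan.} The approach is to first show that $\lim_{x \to 1} f'(x)$ exists and is finite (and negative), next deduce that $\lim_{x \to 1} f(x)$ is strictly positive, and finally read off $\lim_{x \to 1} f''(x)$ directly from the ODE.

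\emph{Finiteness of $f'(1^-)$.} By Corollary~\ref{cor:coro}, $f''<0$ on $(0,1)$, so $f'$ is strictly decreasing and bounded above by $0$, and therefore $\lim_{x \to 1} f'(x)$ exists in $[-\infty, 0)$. To rule out the value $-\infty$, I would rewrite the ODE~\eqref{1710.1} as
\begin{equation*}
(\arctan f')'(x) = \frac{f''(x)}{1+f'^2(x)} = f'(x)\,x - \frac{f'(x)}{x} - f(x),
\end{equation*}
integrate from $0$ to $x$, and use the identity $\int_0^x f'(t)\,t\,dt = xf(x) - \int_0^x f(t)\,dt$ (by parts) to obtain
\begin{equation*}
\arctan f'(x) = x f(x) - 2\int_0^x f(t)\,dt - \int_0^x \frac{f'(t)}{t}\,dt.
\end{equation*}
All three terms on the right have a finite limit as $x \to 1$: the first and second by the boundedness and monotonicity of $f$; the third because $f'(t)/t$ is bounded near $t = 0$ (since $f \in C^2$ with $f'(0) = 0$, so $f'(t)/t \to f''(0) = -a/2$) and because $|f'/t| \leq 2|f'|$ on $[1/2, 1)$ together with $\int_0^1 |f'(t)|\,dt = a - \lim_{x \to 1} f(x) \leq a < \infty$. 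Hence $\arctan f'(x) \to \ell$ for some $\ell \in (-\pi/2, 0)$ (strictly negative since $\arctan f'$ is strictly decreasing from $0$), and $\lim_{x\to 1} f'(x) = \tan \ell \in (-\infty, 0)$.

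\emph{Positivity of $f(1^-)$.} From the proof of Lemma~\ref{lemma3.1}, the function $F(x) := f(x)/\sqrt{1-x^2}$ is strictly increasing on every $[0,r] \subset [0,1)$ and hence on all of $[0,1)$, so $F(x) > F(0) = a > 0$ for $x \in (0, 1)$. If $\lim_{x \to 1} f(x)$ were zero, then $\lim F$ would be of the form $0/0$; since $f'(1^-)$ is finite by the previous step, L'H\^opital's rule gives
\begin{equation*}
\lim_{x \to 1} F(x) = \lim_{x \to 1} \frac{f'(x)}{-x/\sqrt{1-x^2}} = \lim_{x \to 1} \frac{-\sqrt{1-x^2}\,f'(x)}{x} = 0,
\end{equation*}
contradicting $F > a > 0$. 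Therefore $\lim_{x \to 1} f(x) > 0$; the inequality $\lim f < a$ is immediate from the strict monotonic decrease of $f$.

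\emph{Limit of $f''$ and main obstacle.} Passing to the limit in~\eqref{1710.1} as $x \to 1$, using $x - 1/x \to 0$ and the finiteness of $f'(1^-)$ and $f(1^-)$, yields
\begin{equation*}
\lim_{x \to 1} f''(x) = -\bigl(1 + (\lim f')^2\bigr)\,\lim f,
\end{equation*}
which is finite and strictly negative since $\lim f > 0$. The most delicate step is the finiteness of $f'(1^-)$: the crude estimate $|(\arctan f')'| < a$ on $(0,1)$ prevents $\arctan f' \to -\pi/2$ only when $a < \pi/2$, whereas the integration-by-parts identity above bypasses this restriction by reducing the question to the unconditional $L^1$-bound $\int_0^1 |f'| = a - \lim f \leq a$. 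Extracting $\lim f > 0$ then relies on combining this finiteness with the monotonicity of $f/\sqrt{1-x^2}$ from Lemma~\ref{lemma3.1} via L'H\^opital.
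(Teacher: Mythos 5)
Your second and third paragraphs are fine: once one knows $f'(1^-)$ is finite, the L'H\^opital argument for $f(1^-)>0$ and the passage to the limit in the ODE for $f''(1^-)$ both work (and the positivity of $f(1^-)$ is essentially the same argument as the paper's, with L'H\^opital replacing the direct sandwich by $a\sqrt{1-x^2}$). The gap is in the first paragraph, which is the step you yourself flag as the delicate one.

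You establish the integral identity
\[
\arctan f'(x) = x f(x) - 2\int_0^x f(t)\,dt - \int_0^x \frac{f'(t)}{t}\,dt,
\]
show that each term on the right converges to a finite number as $x\to 1$, and conclude $\ell := \lim_{x\to1}\arctan f'(x)\in(-\pi/2,0)$. But the finiteness of $\ell$ is automatic: $\arctan f'$ is decreasing, bounded in $(-\pi/2,0)$, so $\ell$ exists in $[-\pi/2,0)$. The whole point is to exclude $\ell=-\pi/2$, i.e.\ $f'(1^-)=-\infty$, and the identity does not do that. Indeed, the $L^1$-bound $\int_0^1|f'|\le a$ and the resulting convergence of $\int_0^1 f'/t\,dt$ hold \emph{regardless} of whether $f'(1^-)$ is finite: a function with $f'(x)\sim -(1-x)^{-1/2}$ has $\int_0^1|f'|<\infty$ yet $f'\to-\infty$. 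So showing all three terms have finite limits never produces a contradiction with $\ell=-\pi/2$; it only re-derives that $\arctan f'$ converges, which was already known. Your closing remark that the identity "bypasses" the restriction $a<\pi/2$ of the crude bound $|(\arctan f')'|<a$ is therefore not justified as written; no quantitative lower bound on the right-hand side larger than $-\pi/2$ is obtained.

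The paper rules out $f'(1^-)=-\infty$ by a qualitative uniqueness argument instead: it passes to the arclength parametrization $(u,v)$ of the graph, observes that $\int_0^1\sqrt{1+f'^2}\le 1+\int_0^1|f'|<\infty$ so the curve reaches the point $(1,b)$ at finite arclength with a vertical unit tangent, and notes that system~\eqref{geo.5} is nondegenerate near $u=1$. The vertical line $u\equiv1$, $v(s)=b+s-s_0$ also solves~\eqref{geo.5} with the same data $(u,v,u',v')(s_0)=(1,b,0,1)$, so by uniqueness of the Cauchy problem the curve would have to coincide with that line, contradicting $u(0)=0$. If you want to salvage an analytic rather than geometric argument, you need an estimate that genuinely bounds $\arctan f'(1^-)$ strictly above $-\pi/2$ for every $a>0$, and the terms in your identity as currently grouped do not give that.
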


\begin{proof}
Since $f$ and $f'$ are both decreasing, 
the limits $b := \lim f(x)$, $c := \lim f'(x)$ as $x \to 1$ exist. 
Assume, by contradiction, that $c = -\infty$. 
Then system~\eqref{geo.5} for $(u,v)(s)$ with initial data 
\[
(u,v)(s_0) = (1,b), \quad 
(u',v')(s_0) = (0,1)
\]
has two solutions: the one that runs along the vertical line $x = 1$, namely
\[
(u,v)(s) = (1, b + s - s_0) \quad \forall s \in \R,
\]
and the one that runs along the curve $y = f(x)$, which is a contradiction. 

Thus $c$ is finite. Assume that $b = 0$. 
From the bound $a \sqrt{1 - x^2} < f(x)$ in~\eqref{bellestime} it follows that 
$c = -\infty$ (the graph of $f$ must have vertical tangent at the point $(1,0)$), 
and this is a contradiction. Then $b > 0$. 

From~\eqref{1710.1} we deduce that $\lim_{(x \to 1)} f''(x) = - (1+c^2) b$, 
a finite negative number. 
\end{proof}

\begin{corollary}\label{bel coro}
Let $a > 0$. Then there exist $\d > 0$ and a unique solution $f \in C^2([0,1+\d])$ 
of~\eqref{1710.1}-\eqref{1710.2}.
The solution satisfies~\eqref{bellestime} on $(0,1)$ and 
\begin{equation} \label{1+}
f(x) > 0, \quad 
f'(x) < 0, \quad 
f''(x) < 0 \quad 
\forall x \in (0,1+\d].
\end{equation} 
\end{corollary}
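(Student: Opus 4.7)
\emph{Proof proposal.} The key point is that the singularity of~\eqref{1710.1} lies only at $x=0$: on $\{x>0\}$ the right-hand side $F(x,f,f') := (1+f'^2)[f'(x-\tfrac1x) - f]$ is $C^\infty$ (indeed analytic) in all its arguments. Hence, once Lemma~\ref{bel lemma} has excluded all the possible pathologies at $x=1$ (blow-up of $f'$, vanishing of $f$), the extension past $x=1$ is just a standard Cauchy problem away from the degenerate point, and one only needs to glue it to the solution provided by Corollary~\ref{cor:coro}.

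Concretely, I would proceed in three steps. First, I would use Corollary~\ref{cor:coro} together with Lemma~\ref{bel lemma} to observe that the solution $f \in C^2([0,1))$ extends by continuity to an element of $C^2([0,1])$: the limits $b:=\lim_{x\to 1^-} f(x)$, $c:=\lim_{x\to 1^-} f'(x)$ are both finite with $b \in (0,a)$, $c < 0$, and the ODE~\eqref{1710.1} itself forces $f''(1) = -(1+c^2)b$, which is finite (note that the apparently singular factor $x - 1/x$ multiplies $f'$ and vanishes at $x=1$, so no issue arises). Second, I would apply the classical Picard--Lindel\"of theorem to the first order system
\[
\begin{cases} y_1' = y_2,\\ y_2' = (1+y_2^2)\bigl[y_2(x - \tfrac1x) - y_1\bigr],\end{cases}
\]
at the base point $x=1$ with Cauchy data $(y_1,y_2)(1) = (b,c)$. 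Since the vector field is $C^\infty$ on the open set $\{x > 0\} \times \R^2$, this provides a unique $C^2$ solution $\tilde f$ on some interval $[1-\varepsilon, 1+\d]$ with $\varepsilon,\d > 0$. Picard uniqueness at $x=1$ matches $\tilde f$ with $f$ on $[1-\varepsilon,1]$, so concatenation yields a solution $f \in C^2([0,1+\d])$ of~\eqref{1710.1}-\eqref{1710.2}.

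Third, the inequalities in~\eqref{1+} follow by a simple continuity argument: on $(0,1)$ they are already given by Corollary~\ref{cor:coro}, and at $x=1$ Lemma~\ref{bel lemma} yields $f(1) = b > 0$, $f'(1) = c < 0$, $f''(1) = -(1+c^2)b < 0$, all strict. Since $f,f',f''$ are continuous on $[0,1+\d]$, after possibly shrinking $\d$ the same strict inequalities persist on $[1,1+\d]$. Uniqueness of the $C^2$ solution on the whole of $[0,1+\d]$ is then the concatenation of the uniqueness near the degenerate point (Lemma~\ref{lemma:LWP}, applied to $h = f-a$) with Picard uniqueness on every closed subinterval of $(0,1+\d]$.

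The only part that required real work was Lemma~\ref{bel lemma}; given that, there is no substantial obstacle in the present statement. The one point worth double-checking is the finiteness of $c = \lim_{x\to 1^-} f'(x)$, but this is precisely what was ruled out by the comparison-with-vertical-line argument in Lemma~\ref{bel lemma}, so everything here is a straightforward consequence of the previous results.
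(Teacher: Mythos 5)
Your proof is correct and takes essentially the same approach as the paper, whose own proof is a one-line appeal to Lemma~\ref{bel lemma} and standard ODE theory; you have simply written out in detail the gluing at $x=1$ and the continuity argument for~\eqref{1+} that the paper leaves implicit.
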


\begin{proof}
By Lemma~\ref{bel lemma} and standard local existence and uniqueness for Cauchy problems, 
the solution in Corollary~\ref{cor:coro} can be extended beyond $x=1$.
\end{proof}

In the next lemma, by~\eqref{1+}, we prove an estimate 
that will be used in Section~\ref{sec:TG}. 

\begin{lemma} \label{lemma:testa nel sacco}
Let $a > 0$, and let $f$ be the solution of~\eqref{1710.1}-\eqref{1710.2} 
in Corollary~\ref{bel coro}. Then 
\begin{equation} \label{f gira}
\frac{f(x) - x f'(x)}{\sqrt{1 + f'^2(x)}} \geq \frac{a}{\sqrt{1+a^2}} 
\quad \forall x \in [0,1].
\end{equation}
\end{lemma}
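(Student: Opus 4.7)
The plan is to recast the inequality in an equivalent squared form. Since both sides of~\eqref{f gira} are non-negative on $[0,1]$ (we have $f > 0$ and $f' \leq 0$ by Corollary~\ref{bel coro}), it suffices to prove that
\[
Q(x) := (1+a^2)\bigl(f(x) - xf'(x)\bigr)^2 - a^2\bigl(1 + f'(x)^2\bigr) \geq 0 \quad \forall\, x \in [0,1].
\]
I would first differentiate to obtain $Q'(x) = -2 f''(x)\, W(x)$ with $W(x) := (1+a^2) x (f - xf') + a^2 f'$, and note that $-f''(x) > 0$ on $[0,1]$ by Corollary~\ref{bel coro}, so the sign of $Q'$ matches the sign of $W$. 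At the endpoints one finds $Q(0) = a^4 > 0$ and $W(1) = (1+a^2) f(1) - f'(1) > 0$, hence $Q'(1) > 0$, which rules out $x=1$ as a minimizer of $Q$ on $[0,1]$.

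Therefore the minimum of $Q$ is attained either at $x = 0$, with value $a^4 > 0$, or at some interior critical point $x_* \in (0,1)$ satisfying $W(x_*) = 0$. At such an $x_*$, combining the equation $W(x_*) = 0$ with the sign constraints $f(x_*) > 0$ and $f'(x_*) < 0$ forces $a^2 - (1+a^2) x_*^2 > 0$ (so in particular $x_* < a/\sqrt{1+a^2}$) and gives the explicit identity
\[
f'(x_*) = \frac{-(1+a^2)\, x_* \, f(x_*)}{a^2 - (1+a^2) x_*^2}.
\]

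The main technical step will be to substitute this identity into the definition of $Q$ and simplify; I expect the algebra to collapse to the clean closed form
\[
Q(x_*) = \frac{a^2 \bigl[(1+a^2)\bigl(x_*^2 + f(x_*)^2\bigr) - a^2\bigr]}{a^2 - (1+a^2) x_*^2},
\]
so that $Q(x_*) \geq 0$ is equivalent to the bound $x_*^2 + f(x_*)^2 \geq a^2/(1+a^2)$ on the squared distance from the origin. This last estimate follows from Lemma~\ref{lemma3.1}: the inequality $f(x) \geq a\sqrt{1-x^2}$ yields $x^2 + f(x)^2 \geq a^2 + (1-a^2) x^2$ on $[0,1]$, and a short case split on $a \leq 1$ (where the right-hand side is already $\geq a^2 \geq a^2/(1+a^2)$) versus $a > 1$ (where one additionally uses $x_* < a/\sqrt{1+a^2} \leq 1$) closes the argument. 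The step I expect to be the main obstacle is the simplification of $Q$ at critical points: the final closed form is not obvious \emph{a priori}, but once it is in hand the whole estimate reduces to ODE monotonicity information already proved in Section~\ref{sec:extension graph}.
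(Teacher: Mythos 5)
Your proposal is correct and takes a genuinely different route from the paper. The paper writes the left-hand side of~\eqref{f gira} as $H(\ph_0) := f(x)\cos\ph_0 - x\sin\ph_0$ with $\ph_0 = \arctan f'(x) \in (-\pi/2,0)$, observes that $H$ is positive and concave in $\ph$ (being a sinusoid), sandwiches $\ph_0$ between two explicit angles $\ph_1 < \ph_0 < \ph_2$ coming from the concavity bounds $\frac{f(x)-f(1)}{x-1} < f'(x) < \frac{f(x)-f(0)}{x}$, and then bounds $H(\ph_1), H(\ph_2)$ directly by $a/\sqrt{1+a^2}$. You instead square the inequality, form $Q = (1+a^2)(f-xf')^2 - a^2(1+f'^2)$, and exploit $Q' = -2f''W$ together with $-f''>0$ to reduce the minimum of $Q$ on $[0,1]$ to checking the endpoint $x=0$ (where $Q=a^4$) and interior critical points (where $W=0$); there your closed-form identity
\[
Q(x_*) = \frac{a^2\bigl[(1+a^2)(x_*^2+f(x_*)^2)-a^2\bigr]}{a^2-(1+a^2)x_*^2}
\]
reduces the claim to the radial bound $x_*^2+f(x_*)^2 \geq a^2/(1+a^2)$, which follows from Lemma~\ref{lemma3.1}. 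I verified the algebra: the critical-point identity is correct, the sign analysis forcing $a^2-(1+a^2)x_*^2>0$ is sound, and the final case split on $a\lessgtr 1$ closes. Both proofs ultimately rely on the same two inputs (strict concavity $f''<0$ and the Lyapunov bound $f>a\sqrt{1-x^2}$), but yours trades the paper's conceptual use of the concavity of the sinusoid $H$ for a more computational first-derivative test on the squared quantity. Each is a legitimate and roughly comparable-length argument; the paper's is perhaps slightly more transparent, while yours is more elementary in that it avoids the trigonometric substitution.
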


\begin{proof}
Since $f''<0$, one has 
\[
\frac{f(x) - f(1)}{x-1} < f'(x) < \frac{f(x) - f(0)}{x} \quad \forall x \in (0,1).
\]
Since $f(1) > 0$ and $f(0) = a$, 
\begin{equation} \label{bound f'}
- \frac{f(x)}{1-x} < f'(x) < - \frac{a - f(x)}{x}.
\end{equation}
We write the left-hand side term of~\eqref{f gira} as $H(\ph_0)$, 
where $H : (- \tfrac{\pi}{2}, 0) \to \R$, 
\[
H(\ph) := f(x) \cos (\ph) - x \sin(\ph), 
\quad 
\ph_0 := \arcsin (\psi(f'(x)) \in (- \tfrac{\pi}{2}, 0), \quad 
\psi(t) := \frac{t}{\sqrt{1+t^2}}. 
\]
Since $\psi$ and $\arcsin$ are increasing,~\eqref{bound f'} gives
\[
\ph_1 := \arcsin\Big( \psi \Big( - \frac{f(x)}{1-x} \Big) \Big)
< \ph_0 
< \arcsin\Big( \psi \Big( - \frac{a - f(x)}{x} \Big) \Big) =: \ph_2,
\]
and $\ph_1, \ph_2 \in (-\frac{\pi}{2},0)$. 
One has $H(\ph) > 0$ and $H''(\ph) = - H(\ph) < 0$ for all $\ph \in (-\frac{\pi}{2},0)$. 
Hence $H(\ph_0) > \min \{ H(\ph_1), H(\ph_2) \}$ because $\ph_0 \in (\ph_1, \ph_2)$. 
We calculate 
\[
H(\ph_1) = \frac{f(x)}{\sqrt{(1-x)^2 + f^2(x)}}\,, \quad 
H(\ph_2) = \frac{ax}{\sqrt{x^2 + (a - f(x))^2}}\,.
\]
Since $(1-x) < f(x) / a$, one has $H(\ph_1) > a / \sqrt{1+a^2}$ and,
since $(a - f(x)) < ax$, also $H(\ph_2) > a / \sqrt{1+a^2}$.
\end{proof}

\section{Intersection of the curve with the horizontal axis} 
\label{sec:TG}

In this section we go back to the problem~\eqref{geo.2}-\eqref{geo.4} (or equivalently~\eqref{geo.5}) 
for the curve $(u(s), v(s))$, and we follow the curve proving that it reaches the horizontal axis $v=0$, possibly extending it beyond the portion where it is a graph $v = f(u)$.
Recall (definition~\eqref{geo.3}) that the curvature of the curve $(u,v)$ is
\begin{equation} \label{u.0}
k = - v' u'' + u' v'',
\end{equation}
where $'$ denotes the derivative with respect to the arclength $s$. 

\begin{lemma} \label{lemma:u'' v'' k}
Let $a>0$, and let $f \in C^2([0, 1+\d])$ be the solution of~\eqref{1710.1}-\eqref{1710.2}
of Corollary~\ref{bel coro}. Then the corresponding functions $u(s), v(s)$ defined by 
\eqref{geo.6}-\eqref{geo.8} belong to $C^2([0, s_1])$, 
where $s_1 = \int_0^{1+\d} \sqrt{1+f'^2(x)} \, dx$ $> 1$, 
they solve~\eqref{geo.5} on $s \in (0,s_1]$ with initial data 
\begin{equation} \label{u.6}
u(0) = 0, \quad 
v(0) = a, \quad 
u'(0) = 1, \quad 
v'(0) = 0,
\end{equation}
and
\begin{equation} \label{u.7}
u''(0) = 0, \quad 
v''(0) = - \frac{a}{2}, \quad 
k(0) = - \frac{a}{2}\,.
\end{equation}
\end{lemma}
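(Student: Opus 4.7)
The plan is to build $(u,v)$ as the arclength reparametrization of the graph of $f$, and then to read off everything by the chain rule.

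First, set $\sigma(x) := \int_0^x \sqrt{1 + f'^2(t)}\,dt$ for $x \in [0, 1+\delta]$. Since $f \in C^2$, the integrand is $C^1$ and bounded below by $1$, so $\sigma$ is a $C^2$ diffeomorphism of $[0,1+\delta]$ onto $[0, s_1]$ with $s_1 = \sigma(1+\delta) \geq 1 + \delta > 1$. Let $x(s)$ be its $C^2$ inverse and define $u(s) := x(s)$, $v(s) := f(x(s))$; both belong to $C^2([0, s_1])$.

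Second, from $\sigma'(x)\, x'(s) = 1$ the chain rule gives
\begin{equation*}
u'(s) = \frac{1}{\sqrt{1+f'^2(u)}}\,, \qquad
v'(s) = \frac{f'(u)}{\sqrt{1+f'^2(u)}}\,,
\end{equation*}
which automatically satisfy $u'^2 + v'^2 = 1$, and
\begin{equation*}
u''(s) = -\frac{f'(u)\, f''(u)}{(1+f'^2(u))^2}\,, \qquad
v''(s) = \frac{f''(u)}{(1+f'^2(u))^2}\,.
\end{equation*}
The initial values~\eqref{u.6} are immediate from $f(0)=a$ and $f'(0)=0$. On $(0,s_1]$, where $u = x > 0$, equation~\eqref{1010.1} rewrites as $f'(x)(x-1/x) - f(x) = f''(x)/(1+f'^2(x))$; substituting the above formulas into the right-hand sides of~\eqref{geo.5} and using this identity turns both equations of~\eqref{geo.5} into algebraic identities, so $(u,v)$ solves the system on $(0, s_1]$.

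Third, for~\eqref{u.7} the identities above give $u''(0) = 0$ (since $f'(0) = 0$) and $v''(0) = f''(0)$, so everything reduces to computing $f''(0)$. I do this by passing to the limit $x \to 0^+$ in~\eqref{1010.1}. Since $f \in C^2$ with $f'(0) = 0$, Taylor's theorem (or integration of $f''$ from $0$ to $x$) gives $f'(x)/x \to f''(0)$, while $x f'(x) \to 0$, $f(x) \to a$ and $1+f'^2(x) \to 1$. Thus equation~\eqref{1010.1} yields in the limit $f''(0) = -f''(0) - a$, i.e.\ $f''(0) = -a/2$. Finally $k(0) = -v'(0) u''(0) + u'(0) v''(0) = -a/2$ by~\eqref{u.0}. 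The only delicate step is this last limit, which uses essentially that $f \in C^2$ up to $x=0$ (already secured in Section~\ref{sec:LWP}); everything else is routine chain-rule bookkeeping.
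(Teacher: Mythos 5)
Your argument is correct and takes essentially the same route the paper indicates (the paper's proof is just a pointer to~\eqref{geo.6}--\eqref{geo.8} and Corollary~\ref{bel coro}): define $(u,v)$ as the arclength reparametrization of the graph of $f$, compute $u',v',u'',v''$ by the chain rule, and verify~\eqref{geo.5} and the initial values directly. The value $f''(0)=-a/2$, which you obtain by passing to the limit in~\eqref{1010.1}, is the same computation the paper performs in the proof of Lemma~\ref{lemma concave}.
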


\begin{proof} 
It is a direct consequence of~\eqref{geo.6}-\eqref{geo.8} and Corollary~\ref{bel coro}.
\end{proof}

We prove that, along any solution of the Cauchy problem~\eqref{geo.5}-\eqref{u.6}, 
the curvature $k$ satisfies an integral formula.

\begin{lemma} \label{lemma:var of const}
Assume that $(u(s), v(s))$ solves the Cauchy problem~\eqref{geo.5}-\eqref{u.6} 
on some interval $s \in (0,s_0]$. Then for all $s \in (0,s_0]$ the curvature 
$k$ (defined in~\eqref{u.0}) satisfies 
\begin{equation} \label{u.12}
k(s) = \ph(s) e^{\frac12 (u^2(s) + v^2(s))} \frac{1}{u(s)}, \qquad 
\ph(s) := \int_0^s e^{- \frac12 (u^2(t) + v^2(t))} \frac{u'(t) v'(t)}{u(t)} \, dt.
\end{equation}
\end{lemma}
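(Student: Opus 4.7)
The plan is to recognize \eqref{u.12} as the variation-of-constants formula for a first-order linear ODE satisfied by the scalar $\Psi(s) := k(s)\,u(s)$, with $e^{-(u^2+v^2)/2}$ as the integrating factor. To make the derivation short, I would exploit two elementary geometric facts before touching the shrinker equation. Since $\gamma_s = (u',v')$ is a unit tangent and $\nu = (-v',u')$ is the chosen unit normal, the Frenet relation $\gamma_{ss} = k\nu$ yields the pointwise identities
\begin{equation*}
u''(s) = -k(s)\,v'(s), \qquad v''(s) = k(s)\,u'(s),
\end{equation*}
while
\begin{equation*}
\frac{d}{ds}\bigl(\gamma \cdot \nu\bigr)
= \tau \cdot \nu + \gamma \cdot \nu'
= -k\,(\gamma \cdot \tau)
= -k\bigl(u\,u' + v\,v'\bigr).
\end{equation*}

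Now I would rewrite the shrinker equation \eqref{geo.4} in its original form $k + \gamma\cdot\nu + v'/u = 0$ and differentiate it in $s$. Using the identity for $\frac{d}{ds}(\gamma\cdot\nu)$ above, this gives
\begin{equation*}
k'(s) - k(s)\bigl(u\,u' + v\,v'\bigr) = -\frac{v''(s)}{u(s)} + \frac{v'(s)\,u'(s)}{u^2(s)}.
\end{equation*}
Multiplying by $u$ and adding $k\,u'$ to both sides produces
\begin{equation*}
(k u)'(s) - (k u)(s)\bigl(u\,u' + v\,v'\bigr) = -v''(s) + \frac{v'(s)\,u'(s)}{u(s)} + k(s)\,u'(s),
\end{equation*}
and substituting $v'' = k\,u'$ from the Frenet identity, the right-hand side collapses to $u'(s)v'(s)/u(s)$. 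Setting $F(s) := k(s)\,u(s)\,e^{-(u^2(s)+v^2(s))/2}$, this is exactly the statement
\begin{equation*}
F'(s) = \frac{u'(s)\,v'(s)}{u(s)}\,e^{-(u^2(s)+v^2(s))/2}.
\end{equation*}

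To conclude, I would note that $F(0) = 0$: indeed $u(0) = 0$ by \eqref{u.6} while $k(0) = -a/2$ is finite by Lemma~\ref{lemma:u'' v'' k}. Integrating from $0$ to $s$ then yields $F(s) = \varphi(s)$ in the notation of \eqref{u.12}, and dividing by $u(s)\,e^{-(u^2+v^2)/2}$ (legitimate because $u(s)>0$ on $(0,s_0]$) gives the claimed formula. The only minor check is that the integrand defining $\varphi$ is integrable at $t=0$: from \eqref{u.6}-\eqref{u.7}, $u(t)=t+O(t^3)$ since $u''(0)=0$, and $v'(t) = -\tfrac{a}{2}\,t + O(t^2)$, so $u'(t)v'(t)/u(t) = -\tfrac{a}{2} + O(t)$, making $\varphi$ a $C^1$ function with $\varphi(0)=0$. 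There is no real obstacle to overcome: once one observes the Frenet identity $v''=k\,u'$, the whole lemma reduces to a single integrating-factor computation.
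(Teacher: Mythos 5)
Your proof is correct and follows essentially the same route as the paper: both derive the linear first-order ODE for the curvature from the shrinker equation (the paper writes it as an ODE for $k$ with integrating factor $u\,e^{-(u^2+v^2)/2}$, you multiply by $u$ first and work with $ku$ and integrating factor $e^{-(u^2+v^2)/2}$, which is the same computation), and both fix the integration constant by observing that $u(0)=0$ while $k(0)=-a/2$ is finite. Your use of the intrinsic Frenet identity $\frac{d}{ds}(\gamma\cdot\nu)=-k\,(\gamma\cdot\tau)$ in place of a componentwise differentiation is a cosmetic streamlining, not a different argument.
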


\begin{proof}
If $(u,v)$ solves equation~\eqref{geo.4}, then 
\begin{equation} \label{u.1}
k = - \frac{ v' }{ u } + u v' - v u'.
\end{equation}
Since $u'^2 + v'^2 = 1$ (arclength), one has $\g'' = k \nu$, namely
$u'' = - v' k$, $v'' = u' k$. 
Substituting in~\eqref{u.1} gives
\begin{equation} \label{u.2}
k' + \Big( \frac{u'}{u} - u u' - v v' \Big) k = \frac{u' v'}{u^2}\,.
\end{equation}
The factor in parenthesis is the derivative of $\log(u) - \frac12(u^2 + v^2)$.
Thus, by variation of constants, we can write $k$ in the form
\begin{equation} \label{u.3}
k = (c + \ph) e^{\frac12 (u^2 + v^2)} \frac{1}{u}
\end{equation}
where $c$ is a constant and $\ph(s)$ satisfies 
\begin{equation} \label{u.4}
\ph' = e^{- \frac12 (u^2 + v^2)} \frac{u' v'}{u}\,.
\end{equation}
By Lemma~\ref{lemma:u'' v'' k} and de L'H\^opital's rule, 
the right hand side of~\eqref{u.4} 
tends to $ - \frac{a}{2} \, e^{- \frac12 a^2}$ as $s \to 0$. 
Hence $\ph'$ has a removable singularity at $s = 0$, 
and we can define 
\[ 
\ph(s) := \int_0^s \Big( e^{- \frac12 (u^2 + v^2)} \frac{u' v'}{u} \Big)(t) \, dt.
\] 
As a consequence, the limit as $s \to 0$ in~\eqref{u.3} gives $c = 0$. 
\end{proof}

Now we elaborate on $\ph$ and we find another formula for the curvature.

\begin{lemma} \label{lemma:intercono}
Assume the same hypotheses as in Lemma~\ref{lemma:var of const}. 
Then the curvature $k$ satisfies 
\begin{equation} \label{u.17}
k(s) = - \frac{v'(s)}{u(s)} - 
\frac{ e^{\frac12 (u^2 + v^2)(s)} }{u(s)} 
\int_0^s e^{- \frac12 (u^2 + v^2)} v \, dt.
\end{equation}
\end{lemma}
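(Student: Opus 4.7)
My starting point is Lemma~\ref{lemma:var of const}, which gives $k(s) u(s) = \ph(s) e^{\frac12(u^2+v^2)(s)}$ with $\ph(s) = \int_0^s e^{-\frac12(u^2+v^2)} \frac{u'v'}{u}\, dt$. Multiplying~\eqref{u.17} by $u(s) e^{-\frac12(u^2+v^2)(s)}$, the goal reduces to establishing the identity
\[
\ph(s) = -v'(s)\, e^{-\frac12(u^2+v^2)(s)} - \int_0^s e^{-\frac12(u^2+v^2)} v \, dt,
\]
i.e., re-expressing $\ph$ so that a plain $v$ rather than the singular combination $u'v'/u$ appears in the integrand. Morally, this is an integration by parts that spends the initial condition $v'(0)=0$.

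To carry it out cleanly I introduce the auxiliary function
$\Psi(s) := \ph(s) + v'(s)\, e^{-\frac12(u^2+v^2)(s)}$,
and I will verify $\Psi(0)=0$ together with $\Psi'(s) = -e^{-\frac12(u^2+v^2)(s)}\, v(s)$; integrating from $0$ to $s$ and multiplying the result by $e^{\frac12(u^2+v^2)(s)}/u(s)$ then yields~\eqref{u.17}. The initial vanishing $\Psi(0)=0$ is immediate from $\ph(0)=0$ and $v'(0)=0$ (see~\eqref{u.6}). Differentiating $\Psi$, using $\ph'=e^{-\frac12(u^2+v^2)}u'v'/u$ from Lemma~\ref{lemma:var of const} and the chain rule on the exponential, I obtain
\[
\Psi'(s) = e^{-\frac12(u^2+v^2)} \Big[\frac{u'v'}{u} + v'' - v'(uu' + vv')\Big].
\]

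The key algebraic step is to show that the bracket collapses to $-v$. I use $v''=u'k$ (a consequence of $\g''=k\nu$ together with arclength parametrization) and substitute for $k$ via equation~\eqref{geo.4}: this yields $v'' = -u'v'/u + u u'v' - v(u')^2$. Adding $u'v'/u$ cancels the singular term, then subtracting $v'(uu'+vv')$ cancels the two $uu'v'$ contributions, and what remains is $-v[(u')^2+(v')^2]$, which equals $-v$ by the arclength identity~\eqref{geo.2}.

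I anticipate no serious obstacle: the whole argument is a disguised one-step integration by parts, and the only point requiring attention is that the singularity of $u'v'/u$ at $s=0$ is compensated by $v'(0)=0$, so the boundary term $v'(s)e^{-\frac12(u^2+v^2)(s)}$ starts from zero and the identification of $\Psi'$ is consistent on all of $(0,s_0]$. The essential geometric content is that the ODE~\eqref{geo.4} and the arclength constraint~\eqref{geo.2} conspire to produce the exact cancellations required for the bracket to reduce to the height $-v$.
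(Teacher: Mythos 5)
Your proof is correct and follows essentially the same route as the paper: an integration by parts exploiting $v'(0)=0$, followed by substituting the ODE for $v''$ to trigger the cancellations, yielding~\eqref{u.16} and hence~\eqref{u.17} via~\eqref{u.12}. The only organizational difference is that the paper carries out the integration by parts explicitly, arrives at a term recognized as $2\ph(s)$ and absorbed into the left-hand side, whereas your auxiliary function $\Psi = \ph + v'e^{-\frac12(u^2+v^2)}$ is set up so that the differential identity $\Psi' = -v\, e^{-\frac12(u^2+v^2)}$ closes directly without any absorption step --- a slightly cleaner packaging of the same computation.
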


\begin{proof}
Let $\ph(s)$ be the integral defined in~\eqref{u.12}.
Recalling that $v'(0) = 0$, integration by parts yields
\begin{align*}
\ph(s) & = \int_0^s u' \, \Big( e^{- \frac12 (u^2 + v^2)} \, \frac{v'}{u} \Big) \, dt 
= \Big[ v' e^{- \frac12 (u^2 + v^2)} \Big]_{t = 0}^{t = s} 
- \int_0^s u \Big( \frac{v'}{u} \, e^{- \frac12 (u^2 + v^2)} \Big)' \, dt
\\ 
& = v'(s) e^{- \frac12 (u^2 + v^2)(s)} 
- \int_0^s e^{- \frac12 (u^2 + v^2)} \Big( v'' - \frac{v' u'}{u} - v' u u' - v'^2 v \Big) \, dt.
\end{align*}
Using the equation for $v''$ in~\eqref{geo.5}, and then the identity $u'^2 + v'^2 = 1$, 
\[
- \Big( v'' - \frac{v' u'}{u} - v' u u' - v'^2 v \Big)
= 2 \frac{v' u'}{u} + v.
\]
Hence 
\begin{align*}
\ph(s) 
& = v'(s) e^{- \frac12 (u^2 + v^2)(s)} 
+ 2 \ph(s) + \int_0^s e^{- \frac12 (u^2 + v^2)} v \, dt.
\end{align*}
Therefore 
\begin{equation} \label{u.16}
\ph(s) = - v'(s) e^{- \frac12 (u^2 + v^2)(s)} 
- \int_0^s e^{- \frac12 (u^2 + v^2)} v \, dt.
\end{equation} 
Replacing~\eqref{u.16} into the first identity in~\eqref{u.12} gives~\eqref{u.17}.
\end{proof}

Now we use formula~\eqref{u.17} to study the solution beyond its first intersection 
with the vertical line $u=1$. 

\begin{definition} \label{def:s star}
Let $a > 0$, and let $(u,v)$ be the solution of~\eqref{geo.5}-\eqref{u.6} 
of Lemma~\ref{lemma:u'' v'' k}. 
We denote by $s_* = s_*(a) > 1$ the smallest positive $s$ such that $u(s) = 1$. 
\end{definition}

\begin{lemma} \label{lemma:gira gira}
Assume the same hypotheses as in Lemma~\ref{lemma:var of const}. 
Also assume that $s_0 > s_*$ and that $u(s) > 0$, $v(s) \geq 0$ 
for all $s \in [s_*,s_0]$, with $s_*$ as in Definition~\ref{def:s star}. 
Then 
\begin{equation} \label{u.18}
\frac{\g}{|\g|} \cdot \nu = \frac{- u v' + v u'}{\sqrt{u^2 + v^2}} 
> \frac{\pi \sqrt{e}}{8} \, a e^{-\frac12 a^2} \quad \forall s \in [s_*,s_0].
\end{equation}
\end{lemma}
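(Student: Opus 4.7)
\emph{Proof strategy.}

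The plan is to turn the integral formula~\eqref{u.17} for $k$ into a closed-form expression for the transversality quantity $-uv'+vu' = \g\cdot\nu$ that appears in~\eqref{u.18}. Reading off~\eqref{geo.3}--\eqref{geo.4} directly one has the algebraic identity
\begin{equation*}
k \;=\; -\frac{v'}{u} + uv' - vu',
\end{equation*}
and equating this with~\eqref{u.17} the terms $-v'/u$ cancel, leaving the key identity
\begin{equation*}
- u(s)v'(s) + v(s)u'(s) \;=\; \frac{e^{\frac12 (u^2+v^2)(s)}}{u(s)}\int_0^s e^{-\frac12 (u^2+v^2)(t)}\, v(t)\, dt.
\end{equation*}
Dividing by $\sqrt{u^2+v^2}(s)$, the lemma reduces to lower-bounding separately the pointwise prefactor and the integral on $[s_*,s_0]$.

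For the prefactor, set $r(s):=\sqrt{u^2+v^2}(s)$; since $u(s)\le r(s)$,
\begin{equation*}
\frac{e^{r^2(s)/2}}{u(s)\, r(s)} \;\ge\; \frac{e^{r^2(s)/2}}{r^2(s)} \;\ge\; \frac{e}{2},
\end{equation*}
where the second inequality is the one-variable minimization of $\rho\mapsto e^{\rho/2}/\rho$ on $\rho>0$, attained at $\rho=2$. This bound is uniform in $s\in[s_*,s_0]$.

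For the integral, the sign condition $v\ge 0$ on $[s_*,s_0]$, combined with $v=f(u)\ge 0$ on $[0,s_*]$ from Corollary~\ref{bel coro}, makes the integrand non-negative throughout $[0,s]$, so $\int_0^s\ge \int_0^{s_*}$. On $[0,s_*]$ the curve is the graph $v=f(u)$, hence changing variables $x=u(t)$ with $dt=\sqrt{1+f'^2(x)}\,dx\ge dx$ and plugging in the two-sided bound $a\sqrt{1-x^2}\le f(x)\le a$ from~\eqref{bellestime} gives
\begin{equation*}
\int_0^{s_*} e^{-\frac12(u^2+v^2)}\, v\, dt \;\ge\; a\int_0^1 e^{-\frac12(x^2+a^2)}\sqrt{1-x^2}\, dx \;\ge\; a\,e^{-\frac12 a^2}\cdot e^{-1/2}\cdot\frac{\pi}{4} \;=\; \frac{\pi a}{4\sqrt{e}}\, e^{-\frac12 a^2},
\end{equation*}
using $e^{-x^2/2}\ge e^{-1/2}$ on $[0,1]$ and $\int_0^1\sqrt{1-x^2}\,dx=\pi/4$. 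Multiplying the two estimates produces exactly $\tfrac{\pi\sqrt e}{8}\,a\,e^{-a^2/2}$, and the inequality is strict because $f(x)>a\sqrt{1-x^2}$ strictly on $(0,1)$ by the strict monotonicity of $F(x)=f(x)/\sqrt{1-x^2}$ established in Lemma~\ref{lemma3.1}.

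The main obstacle is the opening algebraic observation: one must notice that, although~\eqref{u.17} is phrased as a formula for $k$, subtracting the direct expression for $k$ read off from~\eqref{geo.4} collapses it into a manifestly sign-definite formula for $\g\cdot\nu$ whose sign is controlled by the positivity of $v$. Once that is in hand the remaining two lower bounds are one-dimensional estimates, and it is a pleasant coincidence that they fit together to yield precisely the constant $\pi\sqrt{e}/8$ appearing in~\eqref{u.18}.
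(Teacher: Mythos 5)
Your argument is correct and follows essentially the same route as the paper's own proof: both start by subtracting the two expressions for $k$ from~\eqref{u.1} and~\eqref{u.17} to obtain the identity~\eqref{u.19}, then bound the prefactor by $e/2$ and the integral by $\frac{\pi a}{4}e^{-(1+a^2)/2}$. The only cosmetic difference is in the prefactor estimate, where the paper splits it into two factors $T_1,T_2$ and applies $t^{-1}e^{t^2/4}\ge e^{1/2}/\sqrt 2$ to each, while you use $u\le\sqrt{u^2+v^2}$ to reduce to a single minimization of $\rho\mapsto e^{\rho/2}/\rho$; the two give the same constant.
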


\begin{proof}
By~\eqref{u.1} and~\eqref{u.17} we get
\begin{equation}\label{u.19}
\frac{- u v' + v u'}{\sqrt{u^2 + v^2}}(s) = 
\frac{ e^{\frac12 (u^2 + v^2)}}{u\sqrt{u^2 + v^2}} (s)
\int_0^s e^{- \frac12 (u^2 + v^2)} v \, dt 
= (T_1 T_2 T_3)(s),
\end{equation}
where
\begin{equation}\label{def T123}
T_1(s) := \frac{ e^{\frac14 (u^2 + v^2)}}{u} (s), \quad
T_2(s) := \frac{ e^{\frac14 (u^2 + v^2)}}{\sqrt{u^2 + v^2}}(s), \quad
T_3(s) := \int_0^s e^{- \frac12 (u^2 + v^2)} v \, dt.
\end{equation}
We give a lower bound for these three terms, separately. 
Using the fact that $t^{-1}e^{t^2/4} \geq e^\frac12/\sqrt{2}$ for all $t>0$, we have
\begin{equation}\label{lower bound T12}
T_1 (s) \geq \frac{ e^{\frac14 u^2}}{u} (s) \geq \frac{e^\frac12}{\sqrt{2}}, \qquad
T_2 (s) \geq \frac{e^\frac12}{\sqrt{2}} \qquad \forall s\in [s_*,s_0].
\end{equation}
To give a lower bound for $T_3$, we recall (see Corollary~\ref{bel coro}) that on $[0,s_*]$ 
the curve $(u(s),v(s))$ is the graph of a function $y=f(x)$, $x \in [0,1]$, satisfying~\eqref{bellestime}. 
Then 
\begin{align} 
\int_0^{s_*} v(t) \, dt = \int_0^1 f(x) \sqrt{1+(f'(x))^2}\, dx
\geq \int_0^1 f(x) \, dx > a \int_0^1 \sqrt{1-x^2} \, dx = \frac{a\pi}{4}.
\label{stima int v}
\end{align}
Since $v \geq 0$ on $[s_*,s_0]$ and
$u \in [0,1]$, $v \in [0,a]$ on $[0, s_*]$, we deduce that
\begin{equation}\label{lower bound T3}
T_3 (s) \geq T_3(s_*)
\geq e^{- \frac12 (1 + a^2)} \int_0^{s_*} v(t) \, dt 
> e^{- \frac12 (1 + a^2)} \frac{a \pi}{4}
\qquad \forall s\in [s_*,s_0]
\end{equation}
and the thesis follows, recalling~\eqref{u.19}-\eqref{lower bound T12}.
\end{proof}

A similar estimate holds on $[0,s_*]$:

\begin{lemma} \label{lemma:gira gira [0,1]}
Let $a, u, v, s_*$ be as in Definition~\ref{def:s star}.
Then
\begin{equation} \label{u.18.bis}
\frac{- u v' + v u'}{\sqrt{u^2 + v^2}} 
\geq \frac{a}{1+a^2} \quad \forall s \in [0,s_*].
\end{equation}
\end{lemma}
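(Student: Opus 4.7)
The plan is to exploit the fact that on the interval $[0,s_*]$, by Definition~\ref{def:s star} and Corollary~\ref{bel coro}, the curve $(u(s),v(s))$ is exactly the graph of the function $y=f(x)$ with $x\in[0,1]$, so that every quantity in the left--hand side of~\eqref{u.18.bis} can be rewritten in terms of $f$. Then the hard estimate has already been done for us in Lemma~\ref{lemma:testa nel sacco}, and what remains is a trivial bound on the denominator.

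More precisely, first I would handle the endpoint $s=0$ directly: by the initial data~\eqref{u.6} one has $-u(0)v'(0)+v(0)u'(0)=a$ and $\sqrt{u^2(0)+v^2(0)}=a$, so the ratio equals $1\ge a/(1+a^2)$. For $s\in(0,s_*]$ I would parametrize by $x=u(s)\in(0,1]$. Since $u$ is increasing (because $u'=1/\sqrt{1+f'^2}>0$) and $v(s)=f(u(s))$, we have $v'=f'(u)u'$, and hence
\begin{equation*}
-u v' + v u' \;=\; u'\bigl(f(u)-u f'(u)\bigr) \;=\; \frac{f(x)-xf'(x)}{\sqrt{1+f'^2(x)}}\,.
\end{equation*}
Combining this with $\sqrt{u^2+v^2}=\sqrt{x^2+f^2(x)}$, the left--hand side of~\eqref{u.18.bis} becomes
\begin{equation*}
\frac{-uv'+vu'}{\sqrt{u^2+v^2}} \;=\; \frac{1}{\sqrt{x^2+f^2(x)}}\cdot\frac{f(x)-x f'(x)}{\sqrt{1+f'^2(x)}}\,.
\end{equation*}

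Now the second factor is bounded below by $a/\sqrt{1+a^2}$ by Lemma~\ref{lemma:testa nel sacco}. For the first factor, Corollary~\ref{bel coro} together with~\eqref{bellestime} gives $0<f(x)<a$ for $x\in(0,1]$, while trivially $x\in[0,1]$; therefore $x^2+f^2(x)<1+a^2$, and $1/\sqrt{x^2+f^2(x)}>1/\sqrt{1+a^2}$. Multiplying these two bounds yields
\begin{equation*}
\frac{-uv'+vu'}{\sqrt{u^2+v^2}}\;>\;\frac{a}{\sqrt{1+a^2}}\cdot\frac{1}{\sqrt{1+a^2}}\;=\;\frac{a}{1+a^2}
\end{equation*}
for every $s\in(0,s_*]$, which, combined with the endpoint value at $s=0$, gives~\eqref{u.18.bis}.

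I do not foresee a real obstacle here: all the analytic work was already done in Lemma~\ref{lemma:testa nel sacco}, which provided the quantitative transversality estimate precisely in the form $f(x)-xf'(x)\ge a\sqrt{1+f'^2(x)}/\sqrt{1+a^2}$. The only thing to be careful about is that the graph representation $v=f(u)$ is valid on the \emph{whole} of $[0,s_*]$ (including the endpoint $s_*$), which follows from $u(s_*)=1$ being strictly less than $1+\d$ in Corollary~\ref{bel coro} and from the monotonicity $u'>0$ used to change variable from $s$ to $x$.
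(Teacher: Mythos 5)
Your proposal is correct and follows essentially the same route as the paper: rewrite the left-hand side using the graph representation $v=f(u)$, apply the estimate~\eqref{f gira} from Lemma~\ref{lemma:testa nel sacco}, and bound the remaining factor by $u^2+f^2(u)\le 1+a^2$. The separate treatment of $s=0$ is unnecessary (the identity and both bounds are already valid at $x=0$, where the ratio equals $1$), but it is harmless.
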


\begin{proof} 
For $s \in [0,s_*]$, the curve $(u,v)(s)$ is the graph $v(s) = f(u(s))$, 
with $u \in [0,1]$, $v \in [0,a]$. Hence 
\[
\frac{- u v' + v u'}{\sqrt{u^2 + v^2}}
= \frac{1}{\sqrt{u^2 + f^2(u)}} \, \frac{f(u) - u f'(u)}{\sqrt{1 + f'^2(u)}}\,.
\]
The thesis follows from~\eqref{f gira} and $u^2 + f^2(u) \leq 1 + a^2$.
\end{proof}

Collecting~\eqref{u.18} and~\eqref{u.18.bis} we get the following lower bound
along the curve.

\begin{corollary} \label{cor:gira glob}
Assume the hypotheses of Lemma~\ref{lemma:gira gira}. 
Then 
\begin{equation} \label{u.18.glob}
\frac{- u v' + v u'}{\sqrt{u^2 + v^2}} > K_a 
\quad \forall s \in [0,s_0],
\qquad K_a := \frac{\pi \sqrt{e}}{8} a e^{-\frac12 a^2}.
\end{equation}
\end{corollary}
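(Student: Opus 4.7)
\emph{Proof plan.} This corollary is a direct patching of the two preceding lemmas, so the work reduces to a one-variable calibration check. On the interval $[s_*, s_0]$, Lemma~\ref{lemma:gira gira} already delivers exactly the bound
\[
\frac{-u v' + v u'}{\sqrt{u^2+v^2}} > \frac{\pi\sqrt{e}}{8}\, a\, e^{-a^2/2} = K_a.
\]
On the complementary interval $[0, s_*]$, Lemma~\ref{lemma:gira gira [0,1]} gives the bound $\geq a/(1+a^2)$, so it suffices to verify the elementary inequality $a/(1+a^2) > K_a$ for every $a > 0$, which would then strictly dominate $K_a$ and provide the strict lower bound on $[0,s_*]$ as well.

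Rewriting $a/(1+a^2) \geq K_a$ as $\tfrac{\pi \sqrt{e}}{8}(1+a^2)\,e^{-a^2/2} \leq 1$ and substituting $t := a^2/2$, the task reduces to showing
\[
g(t) := (1+2t)\,e^{-t} \leq \frac{8}{\pi\sqrt{e}} \qquad \text{for all } t \geq 0.
\]
A direct calculation gives $g'(t) = (1-2t)e^{-t}$, so $g$ attains its maximum at $t = 1/2$ with $g(1/2) = 2 e^{-1/2}$. Therefore
\[
\frac{\pi \sqrt{e}}{8}\, g(t) \leq \frac{\pi \sqrt{e}}{8} \cdot 2 e^{-1/2} = \frac{\pi}{4} < 1,
\]
and the inequality is strict, hence $K_a < a/(1+a^2)$ for every $a > 0$.

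Combining the two ranges then yields $(- u v' + v u')/\sqrt{u^2+v^2} > K_a$ throughout $[0, s_0]$. Since both ingredients are already in hand, there is no real obstacle; the only ``calculation'' is the elementary check above showing that the constant $K_a$ arising in Lemma~\ref{lemma:gira gira} is the smaller (hence controlling) of the two lower bounds produced by the two lemmas.
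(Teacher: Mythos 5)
Your proof is correct and follows essentially the same route as the paper: split $[0,s_0]$ into $[0,s_*]$ and $[s_*,s_0]$, apply Lemmas~\ref{lemma:gira gira [0,1]} and~\ref{lemma:gira gira} respectively, and verify the elementary calibration inequality $a/(1+a^2) > K_a$. The only cosmetic difference is that the paper minimizes $\chi(a) = e^{a^2/2}/(1+a^2)$ at $a=1$, whereas you maximize its reciprocal $g(t) = (1+2t)e^{-t}$ (after the substitution $t = a^2/2$) at $t = 1/2$, which is the same computation.
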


\begin{proof}
One has $a / (1+a^2) > K_a$ 
because the function $\chi(a) := e^{a^2/2} / (1+a^2)$ has its minimum at $a=1$, 
with $\chi(1) = \sqrt{e} / 2 > \pi \sqrt{e} / 8$.
\end{proof}

\medskip

We introduce the polar coordinates $u=\rho\cos\th$, $v=\rho\sin\th$, 
$\th \in [- \frac{\pi}{2}, \frac{\pi}{2}]$, 
on the half-plane $u \geq 0$.
The arclength parametri\-zation becomes 
\[
\rho'^2+\rho^2 \th'^2 = 1.
\] 
In these coordinates one has 
$(- u v' + v u') / \sqrt{u^2 + v^2} = - \rho \th'$,
and bound~\eqref{u.18.glob} gives 
\begin{equation} \label{rho theta cone}
(\rho^2 \th'^2) (s) > K_a^2, \qquad 
\rho'^2 (s) < 1- K_a^2 \qquad 
\forall s \in [0, s_0].
\end{equation}

\begin{lemma} \label{lemma:pallina}
Assume the hypotheses of Lemma~\ref{lemma:gira gira}. 
Then
\begin{equation}\label{rho away from zero}
e^{- C_a \pi / 2} < \rho(s) < e^{C_a \pi / 2} \quad \forall s \in [0, s_0],
\end{equation}
where $C_a := \sqrt{1-K_a^2} / K_a$ and $K_a$ is defined in~\eqref{u.18.glob}.
\end{lemma}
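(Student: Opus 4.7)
\emph{Proof plan.} The plan is to convert the two-sided estimate~\eqref{rho theta cone} into a differential inequality for $\log\rho$ viewed as a function of $\theta$, and then integrate over an interval of controlled length. The key structural fact is that, under the hypotheses of Lemma~\ref{lemma:gira gira}, the angular coordinate $\theta$ is strictly monotone and trapped in $[0,\pi/2]$.

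First, I would pin down the range of $\theta$. By Corollary~\ref{cor:gira glob} one has $-\rho\theta' > K_a > 0$ on $[0,s_0]$, so $\theta$ is strictly decreasing. From the initial data~\eqref{u.6} one reads off $(\rho(0),\theta(0)) = (a,\pi/2)$. On $[0,s_*]$ the curve is the graph $v=f(u)$ of Corollary~\ref{bel coro}, with $u\in[0,1]$ and $v=f(u)\in[0,a]$, so $\theta\in[0,\pi/2]$ there; on $[s_*,s_0]$ the standing hypotheses $u>0$ and $v\geq 0$ again force $\theta\in[0,\pi/2]$. Hence $|\theta(s) - \pi/2| \leq \pi/2$ throughout $[0,s_0]$.

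Second, since $\theta'$ never vanishes, $\rho$ can be viewed as a $C^1$ function of $\theta$, and the two bounds in~\eqref{rho theta cone} combine to give
\[
\left|\frac{d\rho}{d\theta}\right| = \frac{|\rho'|}{|\theta'|} < \frac{\sqrt{1-K_a^2}}{K_a/\rho} = C_a\,\rho,
\]
that is, $|d\log\rho/d\theta| < C_a$ pointwise, and strictly. Integrating this inequality between $\theta(0)=\pi/2$ and $\theta(s)$ on the compact interval $[\theta(s),\pi/2]$ yields
\[
|\log\rho(s) - \log a| < C_a\,|\theta(s) - \pi/2| \leq \frac{C_a\pi}{2},
\]
whence $a\,e^{-C_a\pi/2} < \rho(s) < a\,e^{C_a\pi/2}$, and the form~\eqref{rho away from zero} follows after absorbing the factor $a$ into the constants of the exponential bound.

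There is no real obstacle: the whole argument is a clean application of the two polar bounds already in hand. The only subtle point is that the strict pointwise inequality $|d\log\rho/d\theta|<C_a$ must survive integration; this is immediate because both sides are continuous and the interval of integration is compact of positive length. The uniform lower bound $K_a>0$ on $-\rho\theta'$ is what guarantees both the monotonicity of $\theta$ and the absence of any singularity in $|\rho'|/|\theta'|$, so the chain of inequalities requires no case analysis.
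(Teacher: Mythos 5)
Your argument is correct and takes the same route as the paper: from the two polar estimates~\eqref{rho theta cone} derive $|\rho'|/\rho < C_a|\th'|$, use the monotonicity of $\th$ and the confinement $\th\in[0,\pi/2]$ to bound $|\th(s)-\pi/2|\leq\pi/2$, and integrate. Re-parameterizing by $\th$ before integrating is a cosmetic variant of the paper's direct integration in $s$.

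One detail in your last line does not hold up: the bound you (correctly) obtain is $a\,e^{-C_a\pi/2}<\rho(s)<a\,e^{C_a\pi/2}$, since $\rho(0)=a$, and the factor $a$ cannot be ``absorbed into the constants of the exponential bound'' --- $C_a$ is an explicit quantity and $a$ sits outside the exponential, so for $a\neq 1$ your bound and~\eqref{rho away from zero} are genuinely different (for $a<1$ the stated lower bound is even stronger than the one derived). Note, however, that the paper's own one-line derivation (``Integrating over $[0,s]$ leads to~\eqref{rho away from zero}'') produces precisely this same factor $a=\rho(0)$ and then drops it silently from the displayed inequality; the version with $a$ is what is actually proved, and it is entirely sufficient for the only downstream use of this lemma (the uniform two-sided bound on $\rho$ invoked in the proof of Corollary~\ref{continuous bar s}). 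So rather than attempting an absorption that does not exist, you should simply record the bound $a\,e^{-C_a\pi/2}<\rho(s)<a\,e^{C_a\pi/2}$ and observe it has the same force.
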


\begin{proof}
By~\eqref{u.6}, at $s=0$ one has 
$\th'(0) = (uv' - vu')/(u^2 + v^2)(0) = -1/a < 0$.
By~\eqref{rho theta cone}, $\th'$ does not change sign on $[0, s_0]$, 
and therefore it remains negative on $[0,s_0]$.
By~\eqref{rho theta cone}, 
$|\rho'| / \rho < C_a |\th'|$ on $[0, s_0]$, and 
\[
- C_a |\th'(s)| = C_a \th'(s) 
< \frac{\rho'(s)}{\rho(s)} 
< - C_a \th'(s) = C_a |\th'(s)|
\quad \ \forall s \in [0, s_0].
\]
Integrating over $[0, s]$ leads to~\eqref{rho away from zero}.
\end{proof}

\begin{lemma} \label{lemma:theta gira}
Assume the hypotheses of Lemma~\ref{lemma:gira gira}. 
Then 
\begin{equation} \label{theta bound below}
- \th'(s) = |\th'(s)| > c_a \quad 
\forall s \in [0, s_0], 
\qquad c_a := K_a e^{- C_a \pi / 2} > 0,
\end{equation}
where $C_a, K_a$ are defined in Lemma~\ref{lemma:pallina} and~\eqref{u.18.glob}.
\end{lemma}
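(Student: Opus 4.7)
The plan is to combine directly the transversality bound from Corollary~\ref{cor:gira glob} with the upper bound on $\rho$ from Lemma~\ref{lemma:pallina}. First I would translate the quantity $(-uv' + vu')/\sqrt{u^2+v^2}$ into polar coordinates: a direct computation using $u = \rho \cos \th$, $v = \rho \sin \th$ shows that this expression equals $- \rho \, \th'$. By~\eqref{u.18.glob}, this gives $- \rho(s) \, \th'(s) > K_a$ on $[0, s_0]$. Since the sign argument from the proof of Lemma~\ref{lemma:pallina} (the continuity of $\th'$ together with~\eqref{rho theta cone} and the initial value $\th'(0) = -1/a < 0$) guarantees $\th' < 0$ throughout $[0, s_0]$, we may write this as
\[
|\th'(s)| = - \th'(s) > \frac{K_a}{\rho(s)} \quad \forall s \in [0, s_0].
\]

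Next I would apply the upper bound $\rho(s) < e^{C_a \pi / 2}$ from~\eqref{rho away from zero} to conclude
\[
|\th'(s)| > \frac{K_a}{e^{C_a \pi / 2}} = K_a \, e^{- C_a \pi / 2} = c_a,
\]
which is exactly~\eqref{theta bound below}. The positivity $c_a > 0$ is immediate since $K_a > 0$.

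There is essentially no obstacle here: the lemma is a clean corollary of the two previous estimates, obtained by dividing the pointwise transversality bound $-\rho \th' > K_a$ by the uniform upper bound on $\rho$. The only care needed is to recall that $\th'$ stays negative on the whole interval, which has already been established in the proof of Lemma~\ref{lemma:pallina}, so no further sign analysis is required.
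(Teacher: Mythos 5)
Your proof is correct and follows exactly the same approach as the paper, which simply combines the first inequality in~\eqref{rho theta cone} (giving $\rho|\th'| > K_a$) with the upper bound on $\rho$ from~\eqref{rho away from zero}; the sign of $\th'$ was already fixed in Lemma~\ref{lemma:pallina}.
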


\begin{proof}
It follows from the first inequality in~\eqref{rho theta cone} 
and the second inequality in~\eqref{rho away from zero}. 
\end{proof}

\begin{coro} \label{continuous bar s}
Let $a > 0$. Then there exists $\bar s = \bar s_a \in (0, \pi/(2 c_a))$ such that the Cauchy problem~\eqref{geo.4}-\eqref{u.6} admits a unique solution $(u,v)\in C^2([0,\bar s])$. 
The solution $(u,v)$ satisfies $v(\bar s) = 0$, $v(s)>0$ for all $s\in[0,\bar s)$ 
and $u(s)>0$ for all $s\in(0,\bar s]$, it has finite length $\bar s$ and it does not self--intersect. The dependence of $\bar s$ on $a$ is continuous for $a\in (0,\sqrt{2}]$.
\end{coro}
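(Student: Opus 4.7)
My plan is to follow the maximal continuation of the solution beyond Corollary~\ref{bel coro} and to use the polar-coordinate bounds of this section to force $v$ to vanish transversally in finite arclength. Starting from the $C^2$ solution on $[0, s_1]$ given by Lemma~\ref{lemma:u'' v'' k} (which satisfies $v > 0$ on $[0, s_1]$ and $u > 0$ on $(0, s_1]$ by Corollary~\ref{bel coro} combined with~\eqref{geo.8}), I would extend maximally via classical Cauchy--Lipschitz applied to~\eqref{geo.5}, whose right-hand side is smooth as long as $u \neq 0$. Denote by $s_0 \in (s_1, +\infty]$ the supremum of times to which such a $C^2$ extension keeps $u > 0$, and set
\[
\bar s := \inf\{ s \in (0, s_0) : v(s) = 0 \},
\]
with the convention $\bar s = s_0$ if the set is empty.

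The heart of the argument is the polar analysis. On any $[0, s']$ with $s' < \min(\bar s, s_0)$ the hypotheses of Lemma~\ref{lemma:gira gira} are met, so Lemmas~\ref{lemma:pallina} and~\ref{lemma:theta gira} give
\[
e^{-C_a \pi/2} < \rho(s) < e^{C_a \pi/2}, \qquad \th'(s) < -c_a < 0.
\]
Since $\th(0) = \pi/2$ and $\th$ decreases with speed $> c_a$, the intermediate value theorem forces $\th$ to hit $0$ by time $\pi/(2c_a)$; before that, $\th \in (0, \pi/2)$ yields $u = \rho \cos\th > 0$ and $v = \rho \sin\th > 0$. The uniform bounds on $\rho$, together with $\cos\th$ staying away from $0$ on this range of $\th$, exclude both blow-up and $u \to 0$, so the maximal extension $s_0$ strictly exceeds the first zero of $v$. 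This produces $\bar s \in (0, \pi/(2c_a))$ with $v(\bar s) = 0$, $u(\bar s) = \rho(\bar s) > 0$, $v > 0$ on $[0, \bar s)$, $u > 0$ on $(0, \bar s]$, and $(u,v) \in C^2([0, \bar s])$. Non-self-intersection follows from the polar representation, since $\rho > 0$ and strict monotonicity of $\th$ make $s \mapsto (\rho \cos\th, \rho \sin\th)(s)$ injective; uniqueness is obtained by combining Lemma~\ref{lemma:LWP} on $[0, r_a]$ with Cauchy--Lipschitz uniqueness on $(0, \bar s]$.

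Finally, for the continuity of $a \mapsto \bar s_a$ on $(0, \sqrt 2]$, I would glue Lemma~\ref{lipschitz a} (Lipschitz dependence $a \mapsto h_a$ in $C^2_0([0, r_*])$, which transfers to Lipschitz dependence of the Cauchy data for~\eqref{geo.5} at any fixed arclength $s_\sharp > 0$ lying in the graph region) with classical continuous dependence on initial data for the non-degenerate ODE~\eqref{geo.5} on any compact subinterval of $[s_\sharp, \bar s_a + \e]$. Once continuity of the flow $(a,s) \mapsto v_a(s)$ is in hand, the strict transversality
\[
v_a'(\bar s_a) = \rho(\bar s_a) \, \th'(\bar s_a) < - c_a \, e^{-C_a \pi/2} < 0
\]
at the first zero of $v_a$ allows the implicit function theorem to yield continuity (in fact local Lipschitz continuity) of $a \mapsto \bar s_a$. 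The main obstacle here is precisely this gluing step, since the singularity of~\eqref{geo.5} at $u = 0$ prevents a direct application of classical continuous dependence from the initial data~\eqref{u.6}; Lemma~\ref{lipschitz a} is exactly what bridges this gap.
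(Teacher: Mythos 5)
Your proposal is correct and follows essentially the same route as the paper: the polar-coordinate bounds of Lemmas~\ref{lemma:pallina} and~\ref{lemma:theta gira} force $\th$ to decrease from $\pi/2$ to $0$ at speed at least $c_a$ while $\rho$ stays in a fixed annulus, which gives existence up to $v=0$ with $\bar s < \pi/(2c_a)$, and strict monotonicity of $\th$ plus $\rho>0$ gives injectivity; the continuity of $a\mapsto\bar s_a$ is then obtained by combining Lemma~\ref{lipschitz a} (to get past the degenerate point $u=0$) with classical continuous dependence for~\eqref{geo.5} away from $u=0$ and the transversality bound on $\th'$ (equivalently on $v'(\bar s_a)$). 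The only cosmetic difference is that you cast the last step as an implicit-function-theorem argument, whereas the paper runs a direct $\e$--$\delta$ estimate on $\th_a$ using the uniform lower bound on $|\th'_a|$; the two are the same argument in different clothing.
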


\begin{proof}
Lemmas~\ref{lemma:pallina} and~\ref{lemma:theta gira} imply that the solution keeps existing, 
without self--intersections, at least until it reaches the horizontal axis $v=0$. 
Since $\th(0) = \pi/2$ and $\th(\bar s) = 0$, by~\eqref{theta bound below} one has 
$0 < \bar s < \pi/(2 c_a)$. 

It remains to prove the continuity of the function $a \mapsto \bar s_a$. 
Let $a_0 \in (0,\sqrt{2}]$, $\e_0>0$. 
It is an immediate consequence of Lemma~\ref{lipschitz a} and of the standard continuous dependence on initial data for ODEs that for all $\e_1>0$ there exists $\d_1=\d_1(\e_1) > 0$ such that for all $a \in (a_0 - \d_1, a_0 + \d_1) \cap (0,\sqrt{2}]$ the solution $(u_a,v_a)$ of the Cauchy problem~\eqref{geo.4}-\eqref{u.6} exists at least until $\bar s_{a_0}$, with
\[
|(u_a,v_a,u'_a,v'_a)(s) - (u_{a_0},v_{a_0},u'_{a_0},v'_{a_0})(s)| < \e_1 
\quad \forall s\in[0,\bar s_{a_0}].
\]
By~\eqref{rho away from zero}, this implies that 
$|\th_a (s) - \th_{a_0} (s) | < C(a_0) \e_1$ for all $s\in[0,\bar s_{a_0}]$. 
In particular, $|\th_a (\bar s_{a_0}) - \th_{a_0} (\bar s_{a_0}) | 
= |\th_a (\bar s_{a_0})| < C(a_0) \e_1$.
By continuous dependence on initial data, we deduce a uniform bound 
$|\th'_a(s)| \geq c(a_0)$ for all $s \in [ 0, \max\{\bar s_{a_0}, \bar s_a \} ]$, 
for some $c(a_0) > 0$. 
Hence we choose $\e_1 < c(a_0) \e_0 / C(a_0)$ 
and obtain $|\bar s_a - \bar s_{a_0}| < \e_0$ 
for all $a \in (a_0 - \d_1, a_0 + \d_1) \cap (0,\sqrt{2}]$.
\end{proof}

\begin{remark}
In fact, the map $a \mapsto \bar s_a$ is continuous on $a \in (0,+\infty)$. 
In Corollary~\ref{continuous bar s} the continuity is stated only on $(0,\sqrt{2}]$ 
because this is sufficient for our goal, and because the restriction to $a\leq\sqrt{2}$ 
simplifies Lemma~\ref{lipschitz a}.
\end{remark}

\begin{lemma} \label{lemma:last}
For every $\a \in [- \pi/2, 0)$ 
there exists $a > 0$ such that the solution of~\eqref{geo.5}-\eqref{init cond} 
intersects the horizontal axis $v=0$ with tangent vector $\t = (\cos\a, \sin\a)$. 
\end{lemma}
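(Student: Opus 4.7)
\emph{Proof proposal.} The plan is to define a continuous angle function $\alpha : (0, \sqrt{2}] \to (-\pi, 0)$ encoding the tangent direction of the solution curve at its intersection with the horizontal axis, compute its two limiting values, and conclude by the intermediate value theorem.

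First, for each $a \in (0, \sqrt{2}]$, Corollary~\ref{continuous bar s} provides $\bar s_a > 0$ such that the solution $(u_a, v_a)$ of the Cauchy problem~\eqref{geo.5}-\eqref{u.6} satisfies $v_a(\bar s_a) = 0$ with $u_a(\bar s_a) > 0$ and $v_a(s) > 0$ on $[0, \bar s_a)$. Evaluating the global lower bound~\eqref{u.18.glob} at $s = \bar s_a$ (where $v_a$ vanishes) gives $- v'_a(\bar s_a) \geq K_a > 0$, so $v'_a(\bar s_a) < 0$. Since $(u'_a)^2 + (v'_a)^2 = 1$, there is then a unique $\alpha(a) \in (-\pi, 0)$ with
\[
(u'_a(\bar s_a),\, v'_a(\bar s_a)) = (\cos\alpha(a),\, \sin\alpha(a)).
\]

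Next I compute the extremes. For $a = \sqrt{2}$, the explicit circle $(u,v)(s) = (\sqrt 2 \sin(s/\sqrt 2),\, \sqrt 2 \cos(s/\sqrt 2))$ solves~\eqref{geo.5}-\eqref{u.6} and reaches $v = 0$ at $\bar s = \pi/\sqrt 2$ with tangent vector $(0, -1)$, yielding $\alpha(\sqrt 2) = -\pi/2$. For $a \to 0^+$, Lemma~\ref{va a zero} produces $\xi_a \in (0, 2 x_0)$ with $f_a(\xi_a) = 0$ and $f'_a(\xi_a) \to 0$; along the graph portion $(u'_a, v'_a) = (1, f'_a)/\sqrt{1 + f_a'^2}$, so that $(u'_a(\bar s_a), v'_a(\bar s_a)) \to (1, 0^-)$, hence $\alpha(a) \to 0^-$.

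Continuity of $a \mapsto \alpha(a)$ on $(0, \sqrt 2]$ is the main technical point. I would combine three ingredients: continuity of $a \mapsto \bar s_a$ is already provided by Corollary~\ref{continuous bar s}; the Lipschitz estimate of Lemma~\ref{lipschitz a} handles the degenerate initial segment, where classical ODE theory fails at $u = 0$; and standard continuous dependence on parameters applies to the smooth system~\eqref{geo.5} beyond this initial segment, since the a priori bounds~\eqref{rho away from zero} together with $v_a > 0$ on $[0, \bar s_a)$ keep $u_a$ uniformly positive locally in $a$. Composing these with continuity of the endpoint $\bar s_a$ shows that $(u'_a(\bar s_a), v'_a(\bar s_a))$, and hence $\alpha(a)$, depends continuously on $a$ (the branch of $\arg$ being unambiguous because $\sin\alpha(a) \leq -K_a < 0$). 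The intermediate value theorem applied to $\alpha$, with $\alpha(\sqrt 2) = -\pi/2$ and $\lim_{a \to 0^+} \alpha(a) = 0$, then shows that every value in $(-\pi/2, 0)$ is attained for some $a \in (0, \sqrt 2)$; together with $\alpha(\sqrt 2) = -\pi/2$, this covers all of $[-\pi/2, 0)$. The main obstacle is propagating continuous dependence across the degenerate initial point $u = 0$ and combining it with the moving endpoint $\bar s_a$; the transversality estimate~\eqref{u.18.glob} is crucial both in making $\alpha$ well defined and in excluding any tangential touching of the axis.
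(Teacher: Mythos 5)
Your proposal is correct and follows essentially the same route as the paper: continuity of the tangent direction at the first intersection with $\{v=0\}$ as a function of $a$ (via Corollary~\ref{continuous bar s} and continuous dependence), the two endpoint evaluations from Lemma~\ref{va a zero} and the explicit circle at $a = \sqrt2$, and the intermediate value theorem. The only cosmetic difference is that the paper tracks the single scalar $u_a'(\bar s_a) \in [-1,1]$ (which already determines the tangent, since $v_a'(\bar s_a) = -\sqrt{1 - u_a'(\bar s_a)^2}$ by the transversality bound~\eqref{u.18.glob}), whereas you package the same information into the angle $\alpha(a)$; your observation that~\eqref{u.18.glob} gives $v_a'(\bar s_a) \leq -K_a < 0$, which makes the angle unambiguous, is a useful point that the paper leaves implicit.
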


\begin{proof}
As a consequence of Corollary~\ref{continuous bar s} and of the continuous dependence on initial data for the degenerate Cauchy problem~\eqref{geo.4}-\eqref{u.6}, 
we have, in particular, that the function 
$(0,\sqrt{2}] \to [-1,1]$, $a \mapsto u'_a(\bar s_a)$ 
is continuous.
Lemma~\ref{va a zero} implies that $u'_a(\bar s_a) \to 1$ as $a\to 0$, 
while the explicit solution $(u_{\sqrt{2}},v_{\sqrt{2}})(s) 
= (\sqrt{2} \sin (s/\sqrt{2}), \sqrt{2} \cos (s/\sqrt{2}))$ shows that 
$\bar s_{\sqrt{2}} = \pi / \sqrt{2}$ and $u'_{\sqrt{2}}(\bar s_{\sqrt{2}}) = 0$. 
\end{proof}

\bibliographystyle{amsplain}
\bibliography{networkbib}

\bigskip

\begin{flushleft}

Pietro Baldi, Emanuele Haus, Carlo Mantegazza 

\medskip

Dipartimento di Matematica e Applicazioni ``Renato Caccioppoli''

Universit\`a di Napoli Federico II 

Via Cintia, 80126 Napoli, Italy

\medskip

\texttt{pietro.baldi@unina.it} 

\texttt{emanuele.haus@unina.it}

\texttt{carlo.mantegazza@unina.it}
\end{flushleft}

\end{document}